\newtheorem{thm}{Theorem}[section]
\newtheorem{lem}[thm]{Lemma}
\newtheorem{prop}[thm]{Proposition}
\newtheorem{cor}[thm]{Corollary}
\theoremstyle{definition}
\newtheorem{dfn}[thm]{Definition}
\newtheorem{ex}[thm]{Example}
\theoremstyle{remark}
\newtheorem{remark}[thm]{Remark}
\newcommand{\CD}{{\mathcal{D}}}
\newcommand{\CF}{{\mathcal{F}}}
\newcommand{\CH}{{\mathcal{H}}}
\newcommand{\CS}{{\mathcal{S}}}
\newcommand{\CI}{{\mathcal{I}}}
\newcommand{\CL}{{\mathcal{L}}}
\newcommand{\CT}{{\mathcal{T}}}
\newcommand{\CB}{{\mathcal{B}}}
\newcommand{\CR}{{\mathcal{R}}}
\newcommand{\af}{\alpha}
\newcommand{\bt}{\beta}
\newcommand{\gm}{\gamma}
\newcommand{\ld}{\lambda}
\newcommand{\prim}{\operatorname{Prim}}
\newcommand{\sing}{\operatorname{sing}}
\newcommand{\N}{{\mathbb{N}}}
\newcommand{\T}{{\mathbb{T}}}
\begin{document}


\title{Condition (K) for Boolean dynamical systems}

\author[T. M. Carlsen]{Toke Meier Carlsen}
\address{Department of Sciences and Technology \\
University of Faroe Islands, Vestara Bryggja 15, FO-100 T\'orshavn
\\Faroe Islands} \email{toke.carlsen\-@\-gmail.\-com }

\author[E. J. Kang]{Eun Ji Kang$^{\dagger}$}
\thanks{Research partially supported by NRF-2017R1D1A1B03030540$^{\dagger}$}
\address{
Research Institute of Mathematics, Seoul National University, Seoul 08826, 
Korea} \email{kkang33\-@\-snu.\-ac.\-kr }

\keywords{$C^*$-algebras of Boolean dynamical systems, graph $C^*$-algebras, Condition (K), gauge-invariant ideals, real rank zero, maximal tail, ultrafilter cycle, primitive ideal space, topological dimension zero, ideal property}

\subjclass[2010]{46L05, 46L55}

\begin{abstract}
We generalize Condition (K) from directed graphs to Boolean dynamical systems and show that a locally finite Boolean dynamical system $(\CB,\CL,\theta)$ with countable $\CB$ and $\CL$ satisfies Condition (K) if and only if every ideal of its $C^*$-algebra is gauge-invariant, if and only if its $C^*$-algebra has the (weak) ideal property, and if and only if its $C^*$-algebra has topological dimension zero. As a corollary we prove that if the $C^*$-algebra of a locally finite Boolean dynamical system with $\CB$ and $\CL$ countable, either has real rank zero or is purely infinite, then $(\CB, \CL, \theta)$ satisfies Condition (K). We also generalize the notion of maximal tails from directed graph to Boolean dynamical systems and use this to give a complete description of the primitive ideal space of the $C^*$-algebra of a locally finite Boolean dynamical system that satisfies Condition (K) and has countable $\CB$ and $\CL$.
\end{abstract}

\maketitle

\setcounter{equation}{0}

\section{Introduction}
\subsection{Background}
$C^*$-algebras associated to Boolean dynamical systems were introduced in \cite{COP} as a generalization of graph $C^*$-algebras. The class of $C^*$-algebras of Boolean dynamical systems also contains the class of ultragraph $C^*$-algebras, the class of $C^*$-crossed products of Cantor minimal systems, $C^*$-algebras of second countable zero-dimensional topological graphs, $C^*$-algebras of shift spaces, as well as many labeled graph $C^*$-algebras. 

One of the merits of $C^*$-algebras of Boolean dynamical systems is that many of the results about graph $C^*$-algebras can be generalized to $C^*$-algebras of Boolean dynamical systems. In \cite{COP}, the K-theory of the $C^*$-algebra of a Boolean dynamical system was computed, the set of gauge-invariant ideals of the $C^*$-algebra of a Boolean dynamical system was determined, Condition (L) for a Boolean dynamical system was introduced as a generalization of Condition (L) for directed graphs, and it was shown that a Boolean dynamical system $(\CB,\CL,\theta)$ with countable $\CB$ and $\CL$ satisfies Condition (L) if and only if its $C^*$-algebra satisfies the Cuntz--Krieger uniqueness theorem.

Condition (K) for directed graphs was introduced in \cite{KPRR}. A directed graph satisfies Condition (K) if and only if every ideal of its $C^*$-algebra is gauge-invariant \cite[Corollary 3.8]{BHRS}, and if and only if its $C^*$-algebra has real rank zero \cite[Theorem 3.5]{Jeong}. If a $C^*$-algebra has real rank zero, then it has the ideal property (\cite[Remark 2.1]{CPR}) and is $K_0$-liftable (\cite[Definition 3.1]{CPR}). It is proven in \cite[Proposition 2.11 and Theorem 4.2]{CPR} that the converse holds for a separable purely infinite $C^*$-algebra. The weak ideal property was introduced in \cite{PP} where it was also shown that the ideal property implies the weak ideal property. It was proven in \cite[Theorem 2.8]{PP2} that a $C^*$-algebra with the weak ideal property has topological dimension zero.

In \cite{BPRS}, the notion of a maximal tail of a row-finite directed graph with no sinks was introduced, and it was shown that if a row-finite directed graph $E$ with no sinks satisfied Condition (K), then there is a bijective correspondence between the primitive ideals of the $C^*$-algebra of $E$ and the maximal tails of $E$. In addition, a topology on the set of maximal tails was introduced, and it was proved that the previously mentioned bijective correspondence becomes a homeomorphism if  the set of maximal tails is equipped with this topology and the set of primitive ideals of the $C^*$-algebra of $E$ is equipped with the hull-kernel topology. This was generalized to arbitrary directed graphs in \cite{DT}.

\subsection{The contents of this paper}
In this paper we introduce Condition (K) for Boolean dynamical systems (Definition~\ref{def:cond(K)}) and prove that a locally finite Boolean dynamical system $(\CB,\CL,\theta)$ with countable $\CB$ and $\CL$ satisfies Condition (K) if and only if the quotient Boolean dynamical system $(\CB/\CH, \CL, \theta)$ satisfies Condition (L)  for every hereditary saturated ideal $\CH$ of $\CB$, and if and only if every ideal of its $C^*$-algebra $C^*(\CB,\CL,\theta)$ is gauge-invariant (Theorem~\ref{equivalent:(K)}).
 
We also generalize the notion of maximal tails from directed graph to Boolean dynamical systems (Definition~\ref{def:maximal tail}), and show that if a locally finite Boolean dynamical system $(\CB,\CL,\theta)$ with countable $\CB$ and $\CL$ satisfies Condition (K), then there is a bijective correspondence between the primitive ideals of the $C^*$-algebra of $(\CB,\CL,\theta)$ and the maximal tails of $(\CB,\CL,\theta)$ (Proposition~\ref{max-bij-prim}); and we introduce a topology and the set of maximal tails of $(\CB,\CL,\theta)$ (Proposition~\ref{max-top}) such that this correspondence becomes a homeomorphism when the set of maximal tails of $(\CB,\CL,\theta)$ is equipped with this topology and the set of primitive ideals of the $C^*$-algebra of $(\CB,\CL,\theta)$ is equipped with the hull-kernel topology (Theorem~\ref{max-homeo-prim}).

Using these results, we also prove that 
a locally finite Boolean dynamical system $(\CB,\CL,\theta)$ with countable $\CB$ and $\CL$ satisfies Condition (K) 
 if and only if its $C^*$-algebra has the (weak) ideal property, if and only if its $C^*$-algebra has topological dimension zero, and if and only if its $C^*$-algebra has no quotient that contains a corner that is isomorphic to $M_n(C(\T))$ for some $n\in\N$ (Theorem~\ref{equivalent:(K):top dim zero}). As a corollary we prove that if the $C^*$-algebra of a locally finite Boolean dynamical system with $\CB$ and $\CL$  countable, either has real rank zero or is purely infinite, then $(\CB, \CL, \theta)$ satisfies Condition (K) (Corollary~\ref{cor}).

\subsection{Further discussions}
There are plenty of Boolean dynamical systems that satisfy Condition (K) without their $C^*$-algebras being purely infinite, for instance Boolean dynamical systems that give rise to AF-algebras (see \cite{JKK}) and Boolean dynamical systems that give rise to Cantor minimal systems (see \cite{JKKP}). In contrast to this, the authors do not know of any Boolean dynamical system that satisfies Condition (K) without its $C^*$-algebra having real rank zero.

The reason that we have to assume that $\CB$ and $\CL$ are countable in many of our results is that this is an assumption in the Cuntz--Krieger Uniqueness Theorem  \cite[Theorem 9.9]{COP}. The authors are currently working on a paper \cite{CaK3} in which they prove that the Cuntz--Krieger Uniqueness Theorem holds without the assumption that $\CB$ and $\CL$ are countable, and generalize most of the main results of this paper to Boolean dynamical systems $(\CB,\CL,\theta)$ where $\CB$ and $\CL$ are not necessarily countable (and to the generalized Boolean dynamical systems introduced in \cite{CaK2}). Even though the authors do not know of any interesting Boolean dynamical systems with $\CB$ or $\CL$ uncountable, this might be interesting for future applications.

The authors are convinced that if a (generalized) Boolean dynamical system satisfies Condition (K), then its $C^*$-algebra is $K_0$-liftable. However, to prove this one faces the problem that a gauge-invariant ideal of the $C^*$-algebra of a Boolean dynamical system is not necessarily Morita equivalent to a $C^*$-algebra of a Boolean dynamical system. The authors have in \cite{CaK2} enlarged the class of $C^*$-algebras of Boolean dynamical systems by introducing the notation of relative generalized Boolean dynamical systems and constructed corresponding $C^*$-algebras to these relative generalized Boolean dynamical systems. The authors are planning to show that any gauge-invariant ideal of a $C^*$-algebra of a (relative generalized) Boolean dynamical system is Morita equivalent to the $C^*$-algebra of a relative generalized Boolean dynamical system, and to use this to prove that the $C^*$-algebra of a (generalized) Boolean dynamical system that satisfies Condition (K) is $K_0$-liftable.

If it is correct that the $C^*$-algebra of a (generalized) Boolean dynamical system that satisfies Condition (K) is $K_0$-liftable, then it would follow that if the $C^*$-algebra of a (generalized) Boolean dynamical system is separable and purely infinite, then it has real rank zero.

\subsection{The organization of this paper} 
The rest of the paper is organized in the following way: 
In section \hyperref[preliminary]{2} we recall some preliminary results about Boolean dynamical systems and their $C^*$-algebras. In section \hyperref[ultrafilter cycles]{3} and \hyperref[maximal tails]{4}, we introduce the notions of ultrafilter cycles and maximal tails, respectively.  
In section \hyperref[Condition (K)]{5} we define Condition (K) for Boolean dynamical systems. In section \hyperref[gau-inv-ideal]{6},
we prove that 
a necessary and  sufficient   condition to Condition (K) of a locally finite Boolean dynamical system $(\CB,\CL, \theta)$ with countable $\CB$ and $\CL$  is that every  ideal of $C^*(\CB, \CL, \theta)$ is gauge-invariant.
In section \hyperref[primitive ideal space]{7} we completely characterize the primitive ideal space of the $C^*$-algebras of Boolean dynamical systems. In section \hyperref[top dim zero]{8} we show that a locally finite Boolean dynamical system $(\CB, \CL, \theta)$ with countable $\CB$ and $\CL$ satisfies Condition (K) if and only if its $C^*$-algebra has the (weak) ideal property, if and only if its $C^*$-algebra has topological dimension zero.
 We also  illustrate some of the introduced concepts with a recurring example throughout the paper. 

\vskip 1pc 

\section{Preliminaries}\label{preliminary} 

For the convenience of the reader, we shall in this section briefly recall the definition of a Boolean dynamical system and the $C^*$-algebra of a Boolean dynamical system as well of some basic results about $C^*$-algebras of a Boolean dynamical systems from \cite{COP}.

We let $\N$ denote the set of positive integers.
\subsection{Boolean algebras}

A {\em Boolean algebra} \cite[Definition 2.1]{COP} is a set $\CB$ with a distinguished element $\emptyset$ and maps $\cap: \CB \times \CB \rightarrow \CB$, $\cup: \CB \times \CB \rightarrow \CB$ and $\setminus: \CB \times \CB \rightarrow \CB$ such that $(\CB,\cap,\cup)$ is a distributive lattice, $A\cap\emptyset=\emptyset$ for all $A\in\CB$, and $(A\cap B)\cup (A\setminus B)=A$ and $(A\cap B)\cap (A\setminus B)=\emptyset$ for all $A,B\in\CB$.
The Boolean algebra $\CB$ is called {\em unital} if there exists $1 \in \CB$ such that $1 \cup A = 1$ and $1 \cap A=A$ for all $A \in \CB$ (often, Boolean algebras are assumed to be unital and what we here call a Boolean algebra is often called a \emph{generalized Boolean algebra}).

We call $A\cup B$ the \emph{union} of $A$ and $B$, $A\cap B$ the \emph{intersection} of $A$ and $B$, and $A\setminus B$ the \emph{relative complement} of $B$ with respect to $A$.
A subset $\CB' \subseteq \CB$ is called a {\em Boolean subalgebra} if $\emptyset\in\CB'$ and $\CB'$ is closed under taking union, intersection and the relative complement. A Boolean subalgebra of a Boolean algebra is itself a Boolean algebra.

We define a partial order on $\CB$ as follows: for $A,B \in \CB$,
$$A \subseteq B ~~~\text{if and only if}~~~A \cap B =A. $$   
Then $(\CB, \subseteq)$ is a partially ordered set, and $A\cup B$ and $A\cap B$ are the least upper-bound and the greatest lower-bound of $A$ and $B$ with respect to the partial order $\subseteq$. If a family $\{A_{\ld}\}_{\ld \in \Lambda}$ of elements from $\CB$ has a least upper-bound, then we denote it by $\cup_{\ld \in \Lambda} A_\ld$. If $A\subseteq B$, then we say that $A$ is a \emph{subset of} $B$.
 
A non-empty subset $\CI$ of $\CB$ is called  an {\em ideal} \cite[Definition 2.4]{COP} if the following two conditions holds.
\begin{enumerate}
\item[(i)] If $A, B \in \CI$, then $A \cup B \in \CI$.
\item[(ii)] If $A \in \CI$ and $ B \in \CB$, then   $A \cap B \in \CI$. 
\end{enumerate}
An ideal $\CI$ of a Boolean algebra $\CB$ is a Boolean subalgebra. For $A \in \CB$, the ideal generated by $A$ is defined by $\CI_A:=\{ B \in \CB : B \subseteq A\}.$

If $\CI$ is an ideal of a Boolean algebra $\CB$, then the relation
\begin{eqnarray}\label{equivalent relation} A \sim B \iff A \cup A' = B \cup B'~\text{for some}~ A', B' \in \CI
\end{eqnarray}
defines an equivalence relation on $\CB$ (see \cite[Definition 2.5]{COP}). We denote by $[A]$ the equivalence class of $A \in \CB$ and by $\CB / \CI$ the set of all equivalent classes of $\CB$. It is easy to check that $\CB / \CI$ becomes a Boolean algebra with operations defined by $[A]\cap [B]=[A\cap B]$, $[A]\cup [B]=[A\cup B]$, and $[A]\setminus [B]=[A\setminus B]$.
The partial order $\subseteq$ on $\CB / \CI$ is characterized by  
 \begin{align*} [A] \subseteq [B] & \iff A \subseteq B\cup W  ~\text{for some}~  W \in \CI \\
                                    &\iff  [A] \cap [B] =[A].
  \end{align*}

A {\em filter} \cite[Definition 2.6]{COP} $\xi$ in a Boolean algebra $\CB$ is a non-empty subset $\xi \subseteq \CB$  such that 
\begin{enumerate}
\item[$\mathbf{F0}$] $\emptyset \notin \xi$,
\item[$\mathbf{F1}$] if $ A \in \xi$ and  $A \subseteq B$, then $B \in \xi$,
\item[$\mathbf{F2}$] if $A,B \in \xi$, then $A \cap B \in \xi$.
\end{enumerate}
If in addition $\xi$ satisfies 
\begin{enumerate}
\item[$\mathbf{F3}$] if $A \in \xi$  and $B,B' \in \CB$ with $A=B \cup B'$, then either $B \in \xi$ or $B' \in \xi$,
\end{enumerate}
then it is called an {\em ultrafilter} \cite[Definition 2.6]{COP} of $\CB$. A filter is an ultrafilter if and only if it is a maximal element in the set of filters with respect to inclusion. We write $\widehat{\CB}$ for the set of all ultrafilters of $\CB$. Notice that if $A\in\CB\setminus\{\emptyset\}$, then $\{B\in\CB:A\subseteq B\}$ is a filter, and it then follows from Zorn's Lemma that there is an ultrafilter $\eta\in\widehat{\CB}$ that contains $A$. For $A\in\CB$, we let $Z(A):=\{\xi\in\widehat{\CB}:A\in\xi\}$ and we equip $\widehat{\CB}$ with the topology generated by $\{Z(A): A\in\CB\}$. Then $\widehat{\CB}$ is a totally disconnected locally compact Hausdorff space, $\{Z(A): A\in\CB\}$ is a basis for the topology, and each $Z(A)$ is compact and open. 

We now give a simple example of a Boolean algebra and illustrate some of the concepts introduced above. We shall return to this example throughout the paper.

\begin{ex}\label{ex1}
Let $\mathcal{A}:=\{1,2,3\}$ and let $X:=\mathcal{A}^\N$ be the full one-sided shift space on $\mathcal{A}$. Equip $X$ with the product topology. Then $X$ is a Cantor set (i.e. it is a second countable compact Hausdorff space with no isolated points and with a basis of compact open sets). Let $\CB$ be the set of compact open subsets of $X$. Then $\CB$ is a Boolean algebra with unit $X$ where $\cup$, $\cap$ and $\setminus$ are the usual set operations, and $\emptyset$ is the empty set. 

The map $U\mapsto \CI_U:=\{A\in\CB:A\subseteq U\}$ is a bijection between the set of open subsets of $X$ and the set of ideals of $\CB$. If $U\in\CB$, then the map $A\mapsto [A]$ is a Boolean isomorphism from $\CI_{X\setminus U}$ to the quotient $\CB/\CI_U$.

The map $C\mapsto \CF_C:=\{A\in\CB:C\subseteq A\}$ is a bijection between the set of non-empty compact subsets of $X$ and the set of filters in $\CB$, and $x\mapsto \hat{x}:=\{A\in\CB:x\in A\}$ is a bijection from $X$ to $\widehat{\CB}$. If $\Lambda$ is a subset of $\CB$, then the family $\{A\}_{A\in\Lambda}$ has a least upper-bound if and only if $\cup_{A\in\Lambda}A\in\CB$ (i.e., if and only if $\cup_{A\in\Lambda}A$ is compact), in which case $\cup_{A\in\Lambda}A$ is the least upper-bound of $\{A\}_{A\in\Lambda}$.
\end{ex}

\subsection{Boolean dynamical systems}

A map $\phi: \CB \rightarrow \CB'$ between two Boolean algebras is called a {\em Boolean homomorphism} if $\phi(A \cap B)=\phi(A) \cap \phi(B)$, $\phi(A \cup B)=\phi(A) \cup \phi(B)$, and $\phi(A \setminus B)=\phi(A) \setminus \phi(B)$ for all $A,B \in \CB$.

If $\CB$ is a Boolean algebra, then a Boolean homomorphism $\theta: \CB \rightarrow \CB $ is an {\em action} on $\CB$ if $\theta(\emptyset)=\emptyset$. An action $\theta$ has {\em compact range} \cite[Definition 3.1]{COP} if $\{\theta(A)\}_{A \in \CB}$ has a least upper-bound. We denote by $\CR_{\theta}$ this least upper-bound if it exists. An action $\theta$ has {\em closed domain} \cite[Definition 3.1]{COP} if there exists $\CD_{\theta} \in \CB$ such that $\theta(\CD_{\theta})=\CR_{\theta}$.
Notice that if an action $\theta: \CB \rightarrow \CB $ has compact range and $\CB$ has a unit, then $\theta$ has closed domain.

Given a set $\CL$ and any $n \in \N$, we define $\CL^n:=\{(\af_1, \dots, \af_n): \af_i \in \CL\}$ and $\CL^*:=\cup_{n \geq 0} \CL^n$, where $\CL^0:=\{\emptyset \}$. We define $|\af|$ to be $n$ if $\alpha\in\CL^n$. For $\af=(\af_1, \dots, \af_n)$, $\beta=(\beta_1,\dots,\beta_m) \in \CL^*$, we will usually write $\af_1 \cdots \af_n$ instead of $(\af_1, \dots, \af_n)$ and use $\alpha\beta$ to denote the word $\af_1 \cdots \af_n\beta_1\cdots\beta_m$ (if $\alpha=\emptyset$, then $\alpha\beta:=\beta$; and if $\beta=\emptyset$, then $\alpha\beta:=\alpha$). For $k\in\N$, we let $\alpha^k:=\alpha\alpha\cdots\alpha$ where the concatenation on the right has $k$ terms. Similary we let $\alpha^0:=\emptyset$. For $1\leq i\leq j\leq |\af|$, we denote by $\af_{[i,j]}$ the sub-word $\af_i\cdots \af_j$ of  $\af=\af_1\af_2\cdots\af_{|\af|}$, where $\af_{[i,i]}=\af_i$.

A {\em Boolean dynamical system} \cite[Definition 3.3]{COP} is a triple $(\CB,\CL,\theta)$ where $\CB$ is a Boolean algebra, $\CL$ is a set, and $\{\theta_\af\}_{\af \in \CL}$ is a set of actions on $\CB$ such that for $\af=\af_1 \cdots \af_n \in \CL^*\setminus\{\emptyset\}$, the action $\theta_\af: \CB \rightarrow \CB$ defined as $\theta_\af:=\theta_{\af_n} \circ \cdots \circ \theta_{\af_1}$ has compact range and closed domain. 
Given any $\af \in \CL^*\setminus\{\emptyset\}$, we write 
$\CR_\af:=\CR_{\theta_\af}$. We also define $\theta_\emptyset:=\text{Id}$. 
 
For $B \in \CB$, we define 
$$\Delta_B:=\{\af \in \CL:\theta_\af(B) \neq \emptyset \} ~\text{and}~  \ld_B:=|\Delta_B|.$$
We say that $A \in \CB$ is a {\em regular} \cite[Definition 3.5]{COP} set if for any $\emptyset \neq B \in \CI_A$, we have $0 < \ld_B < \infty$.  If $A \in \CB$ is not regular, then it is called a {\em singular} set. We write $\CB_{reg}$ for the set of all regular sets. Notice that $\emptyset\in\CB_{reg}$.

A Boolean dynamical system $(\CB,\CL, \theta)$ is {\em locally finite} \cite[Definition 3.6]{COP} if for every $ \xi \in \widehat{\CB}$ there exists $A \in \xi$ such that $\ld_A < \infty$. Notice that if $\CL$ is finite, then $(\CB,\CL, \theta)$ is locally finite.
 
\begin{ex}\label{ex2}
Let $\mathcal{A}$, $X$ and $\CB$ be as in Example~\ref{ex1}. If $x=(x_k)_{k\in\N}\in X$ and $a\in\mathcal{A}$, then we denote by $ax$ the element of $X$ with $(ax)_1=a$ and $(ax)_k=x_{k-1}$ for $k>1$. Let $\CL:=\{1,2,i,j\}$. Define maps $\theta_a:\CB\to\CB$ for $a\in\CL$ by 
\begin{align*}
\theta_1(A)&:=\{1x:x\in A,\ x_1\in\{1,2\}\},\\
\theta_2(A)&:=\{2x:x\in A,\ x_1\in\{1,2\}\},\\
\theta_i(A)&:=\{x:x\in A,\ x_1=3\},\\
\theta_j(A)&:=\{1x:x\in A,\ x_1=3\}.
\end{align*}
Then $(\CB,\CL,\theta)$ is a Boolean dynamical system. Since $\CL$ is finite, $(\CB,\CL,\theta)$ is locally finite. Moreover, $\lambda_A\in\{2,4\}$ for any $\emptyset\ne A\in\CB$, so $\CB_{reg}=\CB$.
\end{ex}

\subsection{$C^*$-algebras associated with Boolean dynamical systems}

\label{def-rep}
A {\em Cuntz-Krieger representation} \cite[Definition 3.7]{COP} of a Boolean dynamical system $(\CB,\CL,\theta)$
is a family of projections $\{P_A\,:\, A\in \CB\}$ and
partial isometries
$\{S_\af \,:\, \af \in \CL\}$ in a $C^*$-algebra such that for $A, B\in \CB$ and $\af, \bt \in \CL$,
\begin{enumerate}
\item [(i)] $P_{\emptyset}=0$, $P_{A\cap B}=P_AP_B$, and
$P_{A\cup B}=P_A+P_B-P_{A\cap B}$,
\item[(ii)] $P_A S_\af=S_\af P_{\theta_\af(A)}$,
\item[(iii)] $S_\af^*S_\bt=\delta_{\af,\bt}P_{\CR_\af}$,
\item[(iv)]\label{CK4}  $P_A=\sum_{\af \in \Delta_A} S_\af P_{\theta_\af(A)}S_\af^*$ if $A \in \CB_{reg}$.
\end{enumerate}
A representation is called {\it faithful} if $P_A \neq 0$ for all $A \in \CB$. 

 
It is shown in \cite[Theorem 5.8]{COP} that 
if  $(\CB,\CL,\theta)$ is a Boolean dynamical system, then 
there exists a universal Cuntz-Krieger representation $\{s_\af,p_A\}$ of $(\CB, \CL,\theta)$ (here universal means that if $\{S_\af,P_A\}$ is a Cuntz-Krieger representation, then there is a $*$-homomorphism $\phi$ from the $C^*$-algebra generated by $\{s_\af,p_A\}$ to the the $C^*$-algebra generated by $\{S_\af,P_A\}$ such that $\phi(s_\af)=S_\af$ for $\af\in\CL$ and $\phi(p_A)=P_A$ for $A\in\CB$). We let $C^*(\CB, \CL,\theta)$ denote the $C^*$-algebra generated by a universal representation of $(\CB, \CL,\theta)$ and call it the {\em Cuntz-Krieger Boolean  $C^*$-algebra}, or just the \emph{$C^*$-algebra}, of
the Boolean dynamical system  $(\CB,\CL,\theta)$. 

\begin{remark}\label{basics} 
Let $(\CB,\CL,\theta)$ be a Boolean dynamical system.  
\begin{enumerate}
\item The universal property of  $C^*(\CB,\CL,\theta)=C^*(s_a, p_A)$ 
defines a strongly continuous action 
$\gm:\mathbb{T} \rightarrow \text{Aut}(C^*(\CB,\CL,\theta))$,  
called the  {\it gauge action}, such that
$$\gm_z(s_\af)=zs_\af ~ \text{ and } ~ \gm_z(p_A)=p_A$$
for $\af \in \CL$ and $A\in \CB$. 

\item If $\CB$ and $\CL$ are countable, then $C^*(\CB,\CL,\theta)$ is separable.

\end{enumerate}
\end{remark}

Let $(\CB, \CL,\theta)$ be a Boolean dynamical system and let  $\af=\af_1 \cdots \af_{|\af|} \in \CL^*\setminus\{\emptyset
\}$ and $A \in \CB\setminus\{\emptyset\}$.
 \begin{enumerate}
 \item[(i)] A $(\af,A)$ is called a {\em cycle} \cite[Definition 9.5]{COP} if $B=\theta_{\af}(B)$ for all $B \subseteq A$. 
 \item[(ii)] 
  A cycle $(\af,A)$ has an \emph{exit} if for there is a $t \leq |\af|$ and a $B\in\CB$ such that $\emptyset \neq B \subseteq \theta_{\af_1 \cdots \af_t}(A)$ and $\Delta_B\ne\{\af_{t+1}\}$ (where $\af_{|\af|+1}:=\af_1$).
 \item[(iii)] 
  A cycle $(\af,A)$ has no exits \cite[Definition 9.5]{COP} if and only if for all $t \leq |\af|$ and all $\emptyset \neq B \subseteq \theta_{\af_1 \cdots \af_t}(A)$, we have  $B \in \CB_{reg}$ with $\Delta_B=\{\af_{t+1}\}$ (where $\af_{|\af|+1}:=\af_1$).
 \item[(iv)]  We say that $(\CB, \CL,\theta)$ satisfies {\em Condition (L)} \cite[Definition 9.5]{COP} if every cycle has an exit (in \cite{COP}, this condition is called \emph{Condition $(L_\CB)$}). 
 \end{enumerate}

\begin{ex}\label{ex3}
We continue Example~\ref{ex1} and Example~\ref{ex2}. A pair $(\alpha, A)$ is a cycle if and only if $\alpha=i^n$ for some $n\in\N$ and $A\subseteq \{x\in X: x_1=3\}$. Since $\Delta_A=\{i,j\}$ for any $A\subseteq \{x\in X: x_1=3\}$, every cycle in $\CB$ has an exit, and $(\CB, \CL,\theta)$ satisfies Condition (L).
\end{ex}

We need the following easy strengthening of \cite[Theorem 9.9]{COP}.
 
\begin{thm}\label{CK uniqueness thm}  \cite[Cuntz-Krieger Uniqueness Theorem]{COP} 
Let $(\CB,\CL,\theta)$ be a Boolean dynamical system such that $\CB$ and $\CL$ are countable. If $(\CB,\CL,\theta)$ satisfies Condition (L), then for any $*$-homomorphism $\pi:C^*(\CB,\CL,\theta):=C^*(s_a, p_A) \to B$, the following are equivalent.
\begin{enumerate}
\item[(i)]  $\pi(p_A) \neq 0$ for all $ \emptyset \neq A \in \CB $.
\item[(ii)] $\pi$ is injective.
\end{enumerate}
\end{thm}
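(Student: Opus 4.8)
The plan is to deduce this strengthened form of the Cuntz--Krieger Uniqueness Theorem from the version proved as \cite[Theorem 9.9]{COP}. The difference is only cosmetic: the cited theorem presumably asserts that if $(\CB,\CL,\theta)$ satisfies Condition (L), $\CB$ and $\CL$ are countable, and $\pi\colon C^*(\CB,\CL,\theta)\to B$ is a $*$-homomorphism with $\pi(p_A)\neq 0$ for all $\emptyset\neq A\in\CB$, then $\pi$ is injective. What is ``easy strengthening'' here is the observation that the converse implication (ii)$\Rightarrow$(i) is automatic. So the only thing to verify is that direction.

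First I would note that, by the universal property of $C^*(\CB,\CL,\theta)$, the generating projections $p_A$ satisfy $p_A\neq 0$ for all $\emptyset\neq A\in\CB$; equivalently, the universal representation $\{s_\af,p_A\}$ is faithful in the sense defined after \cite[Definition 3.7]{COP}. This faithfulness of the universal representation is part of what \cite[Theorem 5.8]{COP} gives (or follows immediately from the existence of \emph{some} faithful Cuntz--Krieger representation together with universality). Granting this, if $\pi$ is injective then $\pi(p_A)\neq 0$ whenever $p_A\neq 0$, i.e. whenever $A\neq\emptyset$; this is (i). Hence (ii)$\Rightarrow$(i).

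For the reverse direction (i)$\Rightarrow$(ii), I would simply invoke \cite[Theorem 9.9]{COP} verbatim: under Condition (L) and countability of $\CB$ and $\CL$, a $*$-homomorphism out of $C^*(\CB,\CL,\theta)$ that does not kill any $p_A$ with $A\neq\emptyset$ is injective. Combining the two implications gives the stated equivalence.

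I do not expect any real obstacle here, since the statement is explicitly flagged as an ``easy strengthening.'' The only point requiring a word of care is confirming that the $p_A$ are genuinely nonzero for $\emptyset\neq A\in\CB$ (so that condition (i) is not vacuously unobtainable and so that injectivity forces it); but this is exactly the faithfulness of the universal Cuntz--Krieger representation, which is built into \cite[Theorem 5.8]{COP}. So the proof is essentially: ``(i)$\Rightarrow$(ii) is \cite[Theorem 9.9]{COP}; (ii)$\Rightarrow$(i) holds because the $p_A$ are nonzero for $A\neq\emptyset$.''
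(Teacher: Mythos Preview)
Your argument for (ii)$\Rightarrow$(i) is fine and matches the paper's (the paper cites \cite[Corollary 5.3]{COP} rather than \cite[Theorem 5.8]{COP} for the nonvanishing of $p_A$, but this is a cosmetic difference).

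The gap is in (i)$\Rightarrow$(ii). You write that \cite[Theorem 9.9]{COP} ``presumably'' has the hypothesis $\pi(p_A)\neq 0$ for all $\emptyset\neq A\in\CB$, and then invoke it verbatim. In fact the hypothesis of \cite[Theorem 9.9]{COP} is stronger: it requires $\pi(s_\alpha p_A s_\alpha^*)\neq 0$ for all $\alpha\in\CL^*$ and all $\emptyset\neq A\in\CB$ with $A\subseteq\CR_\alpha$. The ``easy strengthening'' in the paper is precisely the observation that the weaker hypothesis (i) already forces this: if $\pi(s_\alpha p_A s_\alpha^*)=0$ for some such $\alpha,A$, then since $A\subseteq\CR_\alpha$ one has $p_A=s_\alpha^* s_\alpha p_A s_\alpha^* s_\alpha$, whence
\[
\pi(p_A)=\pi(s_\alpha^*)\,\pi(s_\alpha p_A s_\alpha^*)\,\pi(s_\alpha)=0,
\]
contradicting (i). So your proof is missing exactly this one-line reduction; once you add it, the argument is the same as the paper's.
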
 

\begin{proof}
(ii)$\implies$ (i) follows from \cite[Corollary 5.3]{COP}. 

(i)$\implies$(ii): It follows from \cite[Theorem 9.9]{COP} that it suffices to show that $\pi(s_\alpha p_A s_\alpha^*) \neq 0$ for all $\alpha\in\CL^*$ and all $\emptyset \neq A \in \CB$ with $A\subseteq\mathcal{R}_\alpha$. Suppose for contradiction that $\pi(s_\alpha p_A s_\alpha^*) = 0$ for some $\alpha\in\CL^*$ and some $\emptyset \neq A \in \CB$ with $A\subseteq\mathcal{R}_\alpha$. Then 
$$\pi(p_A)=\pi(s_\alpha^*s_\alpha p_A s_\alpha^*s_\alpha)=\pi(s_\alpha^*)\pi(s_\alpha p_A s_\alpha^*)\pi(s_\alpha)=0.$$
This shows that (i)$\implies$(ii).
\end{proof}

\subsection{Gauge-invariant ideals of $C^*(\CB,\CL,\theta)$}
We now recall the characterization given in \cite{COP} of the gauge-invariant ideals of the $C^*$-algebra of a locally finite Boolean dynamical system. 

Let $(\CB,\CL,\theta)$ be a Boolean dynamical system. An ideal $\CH$ of $\CB$ is said to be {\em hereditary} if $\theta_{\af}(A) \in \CH$ for $A \in \CH$ and $ \af \in \CL$, and {\em saturated} if $A \in \CH$ whenever $A \in \CB_{reg}$ and $\theta_{\af}(A) \in \CH$ for all $\af \in \Delta_A$.

Given a hereditary ideal  $\CH$  of $\CB$, if we define $\theta_{\af}([A]):=[\theta_{\af}(A)]$ for all $[A] \in \CB/\CH$ and $ \af \in \CL$, then  $(\CB / \CH, \CL,\theta)$ is a Boolean dynamical system (\cite[Proposition 10.7]{COP}). We call it a {\em quotient Boolean dynamical system} of $(\CB,\CL,\theta)$.

\begin{ex}\label{ex4}
We continue Example~\ref{ex1}, Example~\ref{ex2} and Example~\ref{ex3}. Recall that the map $U\mapsto \CI_U:=\{A\in\CB:A\subseteq U\}$ is a bijection between the set of open subsets of $X$ and the set of ideals of $\CB$. If $U$ is a non-empty open subset of $X$, then $\CI_U$ is hereditary if and only if $\{x\in X:x_1\in\{1,2\}\}\subseteq U$ in which case $\CI_U$ is also saturated. 

For an open subset $U$ of $X$ with $\{x\in X:x_1\in\{1,2\}\}\subseteq U$, if we let $\CB_U$ be the set of compact and open subsets of $X\setminus U$ (where the latter is equipped with the subspace topology), then $\CB_U$ equipped with the usual set operations is a Boolean algebra, and the map $A\setminus U\mapsto [A]$ is an isomorphism from $\CB_U$ to $\CB/\CI_U$. 

For an open subset $U$ of $X$ with $\{x\in X:x_1\in\{1,2\}\}\subseteq U$, the action $\theta$ of $\CL$ on $\CB/\CI_U$ is given by $\theta_1([A])=[\theta_1(A)]=\emptyset$, $\theta_2([A])=[\theta_2(A)]=\emptyset$, $\theta_j([A])=[\theta_j(A)]=\emptyset$, and $\theta_i([A])=[\theta_i(A)]=[A]$ for any $A\in\CB$. It follows that if $A\in\CB$ with $A\setminus U\ne\emptyset$, then $(i,[A])$ is a cycle with no exits. So if $U$ is an open subset of $X$ with $\{x\in X:x_1\in\{1,2\}\}\subseteq U$, then $(\CB/\CI_U,\CL,\theta)$ does not satisfy Condition (L).
\end{ex}
 
For a hereditary saturated ideal $\CH$ of $\CB$, we denote by $I_\CH$ the ideal   of $C^*(\CB,\CL,\theta)$ generated by the projections $\{p_A : A \in \CH\}$. Given an ideal $I$ of $C^*(\CB,\CL, \theta)$, we let $\CH_I:=\{A \in \CB: p_A \in I\}$. Then $\CH_I$ is an ideal of $\CB$.

It is shown in \cite[Proposition 10.11]{COP} that if $(\CB,\CL,\theta)$ is a locally finite Boolean dynamical system, then the maps $I \mapsto \CH_I$ and $\CH \mapsto I_{\CH}$ define a one-to-one correspondence between the set of all nonzero gauge-invariant ideals of $C^*(\CB,\CL,\theta)$ and the set of all non-empty hereditary saturated ideals of $\CB$.

\section{Ultrafilter cycles}\label{ultrafilter cycles}

Our definition of Condition (K) for Boolean dynamical systems relies on the notion of \emph{ultrafilter cycles} which we now introduce.

\begin{dfn}
Let $(\CB, \CL, \theta)$ be a Boolean dynamical system. We say that a pair $(\alpha,\eta)$, where $\alpha\in \CL^*\setminus\{\emptyset\}$ and $\eta\in\widehat{\CB}$, is an \emph{ultrafilter cycle} if $\theta_\alpha(A)\in\eta$ for all $A \in \eta$.
\end{dfn}

If $\eta\in\widehat{\CB}$ and $\alpha\in\CL^*$, then $\widehat{\theta}_\alpha(\eta):=\{A\in\CB:\theta_\alpha(A)\in\eta\}$ is either empty or an ultrafilter. In fact, $\widehat{\theta}_\alpha(\eta)\in\widehat{\CB}$ if and only if $\CR_\alpha\in\eta$ or $\alpha=\emptyset$. Let 
$$\widehat{\CR}_\alpha:=\{\eta\in\widehat{\CB}:\CR_\alpha\in\eta\}$$
if $\alpha\ne\emptyset$, and $\widehat{\CR}_\alpha:=\widehat{\CB}$ if $\alpha=\emptyset$. 
Then $\eta\mapsto \widehat{\theta}_\alpha(\eta)$ is a map from $\widehat{\CR}_\alpha$ to $\widehat{\CB}$ which we denote by $\widehat{\theta}_\alpha$. Moreover, $\widehat{\theta}_\alpha(\widehat{\CR}_\alpha)=\{\eta\in\widehat{\CB}: \theta_\alpha(A)\ne\emptyset\text{ for all }A\in\eta\}$.

\begin{lem}\label{ultra filter cycle lemma} 
Let $(\CB,\CL,\theta)$ be a Boolean dynamical system.
\begin{enumerate}
 \item If $(\af,\eta)$ is an ultrafilter cycle, then $(\af^k, \eta)$ is an ultrafilter cycle for all $k \in \N$.  
\item  A pair $(\af, \eta)$ is an ultrafilter cycle if and only if $\eta=\widehat{\theta}_{\af}(\eta)$, and if and only if $\eta=\widehat{\theta}_{\af^k}(\eta)$ for all $k \in \N$.
\item Let $(\af,A)$ be a cycle and $\eta$ be an ultrafilter of $\CB$ such that $A \in \eta$. Then $(\af, \eta)$ is an ultrafilter cycle. 
\end{enumerate}
\end{lem}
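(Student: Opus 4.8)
The plan is to prove everything by unwinding the definitions of ultrafilter cycle and of the map $\widehat{\theta}_\alpha$, together with the observation that $\theta_{\alpha\beta}=\theta_\beta\circ\theta_\alpha$ so $\widehat{\theta}_{\alpha\beta}=\widehat{\theta}_\alpha\circ\widehat{\theta}_\beta$ wherever both sides are defined.

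\emph{Part (2) first.} I would start with (2), since (1) follows from it. Fix $(\alpha,\eta)$. First note that $(\alpha,\eta)$ being an ultrafilter cycle means $\theta_\alpha(A)\in\eta$ for all $A\in\eta$, i.e. $\eta\subseteq\widehat{\theta}_\alpha(\eta)$; in particular $\theta_\alpha(A)\ne\emptyset$ for all $A\in\eta$ (by $\mathbf{F0}$), so $\CR_\alpha\in\eta$ and hence $\eta\in\widehat{\CR}_\alpha$, so $\widehat{\theta}_\alpha(\eta)$ is a genuine ultrafilter. Since an ultrafilter is a maximal filter, the inclusion $\eta\subseteq\widehat{\theta}_\alpha(\eta)$ forces $\eta=\widehat{\theta}_\alpha(\eta)$. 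Conversely, if $\eta=\widehat{\theta}_\alpha(\eta)$ then trivially $A\in\eta\implies A\in\widehat{\theta}_\alpha(\eta)\implies\theta_\alpha(A)\in\eta$, so $(\alpha,\eta)$ is an ultrafilter cycle. This gives the first ``if and only if.'' For the second, if $\eta=\widehat{\theta}_{\alpha^k}(\eta)$ for all $k\in\N$, then taking $k=1$ gives that $(\alpha,\eta)$ is an ultrafilter cycle. For the reverse direction I would induct on $k$, using $\widehat{\theta}_{\alpha^{k+1}}=\widehat{\theta}_{\alpha}\circ\widehat{\theta}_{\alpha^k}$ (valid because at each stage the relevant $\CR$ lies in the ultrafilter, as just shown), so that $\widehat{\theta}_{\alpha^{k+1}}(\eta)=\widehat{\theta}_\alpha(\widehat{\theta}_{\alpha^k}(\eta))=\widehat{\theta}_\alpha(\eta)=\eta$.

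\emph{Part (1).} Given that $(\alpha,\eta)$ is an ultrafilter cycle, part (2) gives $\eta=\widehat{\theta}_{\alpha^k}(\eta)$ for all $k$, and then the converse direction of the first equivalence in (2), applied to the word $\alpha^k$ in place of $\alpha$, shows $(\alpha^k,\eta)$ is an ultrafilter cycle.

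\emph{Part (3).} Let $(\alpha,A)$ be a cycle and $\eta\in\widehat{\CB}$ with $A\in\eta$. I want $\theta_\alpha(B)\in\eta$ for every $B\in\eta$. Given such $B$, set $C:=A\cap B$; then $C\in\eta$ (by $\mathbf{F2}$) and $C\subseteq A$, so the cycle condition gives $\theta_\alpha(C)=C$. Hence $C\in\eta$ means $\theta_\alpha(C)\in\eta$, and since $\theta_\alpha(C)=\theta_\alpha(A\cap B)\subseteq\theta_\alpha(B)$ (as $\theta_\alpha$ is a Boolean homomorphism, hence order-preserving), property $\mathbf{F1}$ yields $\theta_\alpha(B)\in\eta$. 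Thus $(\alpha,\eta)$ is an ultrafilter cycle.

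I do not anticipate a serious obstacle here; the only point requiring a little care is the bookkeeping in (2) to make sure that each $\widehat{\theta}_{\alpha^k}(\eta)$ is actually an ultrafilter (so that the composition identity $\widehat{\theta}_{\alpha^{k+1}}=\widehat{\theta}_\alpha\circ\widehat{\theta}_{\alpha^k}$ may be applied and the maximality argument invoked), and this is handled by the remark preceding the lemma together with the $\CR_\alpha\in\eta$ observation above. Everything else is a direct definition chase using that actions are Boolean homomorphisms and ultrafilters are maximal filters.
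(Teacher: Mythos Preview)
Your proposal is correct and follows essentially the same approach as the paper: the maximality-of-ultrafilters argument for the first equivalence in (2), an induction on $k$ for the iterated statement, and the computation $A\cap B=\theta_\alpha(A\cap B)\subseteq\theta_\alpha(B)$ for (3) are exactly what the paper does. The only cosmetic difference is that the paper proves (1) first by a direct induction (if $\theta_\alpha(A)\in\eta$ for all $A\in\eta$, then $\theta_{\alpha^{k+1}}(A)=\theta_{\alpha^k}(\theta_\alpha(A))\in\eta$) and then invokes (1) to finish (2), whereas you prove (2) first and read off (1); the content is the same.
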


\begin{proof}(1): Follows by induction on $k$.

(2): If $\eta=\widehat{\theta}_\af(\eta)$, then obviously $(\af,\eta)$ is an ultrafilter cycle. Conversely, if $(\af,\eta)$ is an ultrafilter cycle, then $\eta \subseteq \widehat{\theta}_{\af}(\eta),$ and hence $\eta = \widehat{\theta}_{\af}(\eta)$ since $\eta$ is an ultrafilter. (2) therefore follows from (1).

(3): Let $B \in \eta$. Then we have that $$ \eta \ni A \cap B =\theta_{\af}(A \cap B)= \theta_{\af}(A) \cap \theta_{\af}(B) \subseteq \theta_{\af}(B), $$
  which implies that $\theta_{\af}(B) \in \eta$. Thus $(\af,\eta)$ is an ultrafilter cycle. 
\end{proof}

\begin{ex}\label{ex5}
We continue Example~\ref{ex1}, Example~\ref{ex2}, Example~\ref{ex3}, and Example~\ref{ex4}. Recall that $x\mapsto\hat{x}$ is a bijection from $X$ to $\widehat{\CB}$. We have that 
\begin{align*}
\widehat{\CR}_1&=\{\hat{x}:x_1=1,\ x_2\in\{1,2\}\},\\
\widehat{\CR}_2&=\{\hat{x}:x_1=2,\ x_2\in\{1,2\}\},\\
\widehat{\CR}_i&=\{\hat{x}:x_1=3\},\\
\widehat{\CR}_j&=\{\hat{x}:x_1=1,\ x_2=3\},
\end{align*}
and that the maps $\widehat{\theta}_a:\widehat{\CR}_a\to \widehat{\CB}$ are given by
\begin{equation*}
\widehat{\theta}_1(\widehat{1x})=\hat{x},\quad
\widehat{\theta}_2(\widehat{2x})=\hat{x},\quad
\widehat{\theta}_i(\widehat{x})=\hat{x},\quad
\widehat{\theta}_j(\widehat{1x})=\hat{x}.
\end{equation*}
Moreover, $(i,\hat{x})$ is an ultrafilter cycle for any $\hat{x}\in \widehat{\CR}_i$, and $(\alpha,\hat{x})$ is an ultrafilter cycle if $\alpha$ is a finite word with letters from $\{1,2\}$ and $x:=\alpha\alpha\dots$ is the infinite word we get by concatenating $\alpha$ with itself infinitely many times.
\end{ex}

\section{Maximal tails}\label{maximal tails}
We now generalize the notion of \emph{maximal tails} from directed graphs (see \cite{BPRS} and \cite{DT}) to our setting. We write $A \geq B$  for $A, B \in \CB$\, if there exists an $\af \in \CL^*$ such that $ B \subseteq \theta_{\af}(A)$. 

\begin{dfn} \label{def:maximal tail}
Let $(\CB, \CL, \theta)$ be a Boolean dynamical system. A non-empty subset $\CT$ of $\CB$ is called a {\it maximal tail}  if
\begin{enumerate}[label=(T\arabic*),start=0]
\item \label{T0} $\emptyset \notin \CT$;
\item \label{T1} if $A \in \CB$ and $\theta_{\af}(A) \in \CT$ for some $\af \in \CL$, then $A \in \CT$;
\item \label{T2} if $A \cup B \in \CT$, then $A \in \CT$ or $B \in \CT$;
\item \label{T3} if $A \in \CT$, $B \in \CB$ and $A \subseteq B$, then $B \in \CT$;
\item \label{T4} if $A \in \CT$ is regular, then there is an $\af \in \CL$ such that $\theta_\af(A) \in \CT$;
\item \label{T5} if $A_1, A_2 \in \CT$, then there exists $C \in \CT$ such that $A_1 \geq C$ and $A_2 \geq C$.
\end{enumerate}
\end{dfn}

\begin{remark} In the above definition, (T1) and (T4) are equivalent to (T1') and (T4'), respectively.
\begin{enumerate}
\item[(T1')] if $A \in \CB$ and $\theta_{\af}(A) \in \CT$ for some $\af \in \CL^*$, then $A \in \CT$;
\item[(T4')] if $A \in \CT$ is regular, then there is an $\af \in \CL^*\setminus\{\emptyset\}$ such that $\theta_\af(A) \in \CT$.
\end{enumerate}
\end{remark}




\begin{lem}\label{remark:1}
If $\CT$ is a maximal tail, then $\CB\setminus\CT$ is a hereditary and saturated ideal of $\CB$.
\end{lem}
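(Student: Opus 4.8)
The plan is to verify directly, clause by clause, that $\CB\setminus\CT$ satisfies the definition of an ideal (closure under finite unions, and downward closure under intersection with arbitrary elements of $\CB$), that it is hereditary (stable under each $\theta_\af$), and that it is saturated (if $A\in\CB_{\reg}$ and $\theta_\af(A)\in\CB\setminus\CT$ for all $\af\in\Delta_A$, then $A\in\CB\setminus\CT$). Each of these will be the contrapositive of one of the axioms (T0)--(T4).

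\medskip

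First I would note $\emptyset\in\CB\setminus\CT$ by \ref{T0}, so $\CB\setminus\CT$ is non-empty. For the ideal axiom (i), suppose $A,B\in\CB\setminus\CT$; if $A\cup B\in\CT$ then by \ref{T2} either $A\in\CT$ or $B\in\CT$, a contradiction, so $A\cup B\in\CB\setminus\CT$. For axiom (ii), suppose $A\in\CB\setminus\CT$ and $B\in\CB$; since $A\cap B\subseteq A$, if $A\cap B\in\CT$ then \ref{T3} would force $A\in\CT$, a contradiction, so $A\cap B\in\CB\setminus\CT$. Hence $\CB\setminus\CT$ is an ideal. For hereditariness, let $A\in\CB\setminus\CT$ and $\af\in\CL$; if $\theta_\af(A)\in\CT$ then \ref{T1} gives $A\in\CT$, a contradiction, so $\theta_\af(A)\in\CB\setminus\CT$. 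For saturation, let $A\in\CB_{\reg}$ with $\theta_\af(A)\in\CB\setminus\CT$ for all $\af\in\Delta_A$; if $A\in\CT$, then since $A$ is regular \ref{T4} yields some $\af\in\CL$ with $\theta_\af(A)\in\CT$, and such an $\af$ necessarily lies in $\Delta_A$ (as $\theta_\af(A)\ne\emptyset$ because $\emptyset\notin\CT$ by \ref{T0}), contradicting $\theta_\af(A)\in\CB\setminus\CT$; hence $A\in\CB\setminus\CT$.

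\medskip

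There is no real obstacle here: the statement is essentially a bookkeeping exercise translating the maximal-tail axioms into their contrapositives, and axiom \ref{T5} plays no role in this particular lemma (it will matter only for the correspondence with primitive ideals later). The one point requiring a moment's care is the saturation step, where one must observe that an $\af$ witnessing \ref{T4} automatically belongs to $\Delta_A$; this is immediate from \ref{T0}, since $\theta_\af(A)\in\CT$ forces $\theta_\af(A)\ne\emptyset$.
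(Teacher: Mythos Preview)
Your proof is correct and follows essentially the same approach as the paper: each clause of the definition of a hereditary saturated ideal is obtained as the contrapositive of one of \ref{T0}--\ref{T4}, with \ref{T5} playing no role. The only difference is that you make explicit the small observation that the $\af$ produced by \ref{T4} lies in $\Delta_A$, which the paper leaves implicit.
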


\begin{proof}
Suppose $\CT$ is a maximal tail. Then $\CB\setminus\CT$ is non-empty because $\emptyset\notin\CT$. It follows from Property (T2) that if $A,B\in \CB\setminus\CT$, then $A\cup B\in \CB\setminus\CT$; and it follows from Property (T3) that if $A\in \CB\setminus\CT$ and $B\in\CB$, then $A\cap B\in \CB\setminus\CT$. This shows that $\CB\setminus\CT$ is ideal of $\CB$. It follows from Property (T1) that $\CB\setminus\CT$ is hereditary, and from Property (T4) that $\CB\setminus\CT$ is saturated.
\end{proof}

\begin{ex}\label{ex6}
We continue Example~\ref{ex1}, Example~\ref{ex2}, Example~\ref{ex3}, Example~\ref{ex4}, and Example~\ref{ex5}. Recall that $x\mapsto\hat{x}$ is a bijection from $X$ to $\widehat{\CB}$ and that each hereditary and saturated ideal of $\CB$ has the form $\CI_U:=\{A\in\CB:A\subseteq U\}$ for some open subset $U\subseteq X$ for which $\{x\in X:x_1\in\{1,2\}\}$. Using this and the lemma above, it is straightforward to check that $\{\hat{x}:x\in X,\ x_1=3\}$ is the set of all maximal tails of $(\CB,\CL,\theta)$.
\end{ex}

In the example above, each maximal tail is an ultrafilter. In general, a maximal tail is not necessarily a filter (consider for example the case where $\CB$ is the Boolean algebra of subsets of $\mathbb{Z}$, $\CL=\{a\}$ and $\theta_a(A)=\{x+1:x\in A\}$ for $A\in\CB$; then $\CB\setminus\{\emptyset\}$ is a maximal tail, but not a filter).

\begin{prop} \label{prop:cyclic maximal tails}
Let $(\CB, \CL, \theta)$ be a Boolean dynamical system. Suppose $(\alpha,\eta)$ is an ultrafilter cycle and $A\in\eta$ is such that if $\beta\in\CL^*\setminus\{\emptyset\}$, $B\in \CI_A$, and $\theta_\beta(B)\in\eta$, then $B\in\eta$ and $\beta=\alpha^k$ for some $k\in\N$. Then
$$\CT:=\{B\in\CB:\theta_\beta(B)\in\eta\text{ for some }\beta\in\CL^*\}$$
is a maximal tail such that $(\CB/(\CB\setminus\CT),\CL,\theta)$ does not satisfy Condition (L).
\end{prop}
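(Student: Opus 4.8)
The plan is to verify the six maximal-tail axioms for $\CT$ directly, and then to exhibit a cycle with no exits in the quotient $(\CB/(\CB\setminus\CT),\CL,\theta)$. Write $\CH:=\CB\setminus\CT$; once the axioms (T0)--(T5) are checked, Lemma~\ref{remark:1} tells us $\CH$ is a hereditary saturated ideal, so the quotient Boolean dynamical system makes sense. For the axioms: (T0) holds because $\theta_\beta(\emptyset)=\emptyset\notin\eta$ for every $\beta$. (T1) and (T3) are immediate from the definition of $\CT$ (for (T3), if $\theta_\beta(B)\in\eta$ and $B\subseteq B'$, then $\theta_\beta(B)\subseteq\theta_\beta(B')$, and $\eta$ is upward closed by $\mathbf{F1}$). (T2) uses that $\eta$ is an \emph{ultra}filter: if $\theta_\beta(B_1\cup B_2)=\theta_\beta(B_1)\cup\theta_\beta(B_2)\in\eta$, then by $\mathbf{F3}$ one of the two summands lies in $\eta$. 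For (T5): given $B_1,B_2\in\CT$ with $\theta_{\beta_i}(B_i)\in\eta$, I would use Lemma~\ref{ultra filter cycle lemma}(1)--(2) to move everything along the cycle; concretely, since $(\alpha,\eta)$ is an ultrafilter cycle, $\widehat\theta_{\alpha^k}(\eta)=\eta$ for all $k$, and $A\in\eta$, so for suitable large powers one finds a common $C\in\eta\subseteq\CT$ below both $B_i$ in the $\ge$ order (here one uses that $\theta_{\beta_i}(B_i)\in\eta$ forces, via the hypothesis on $A$ after intersecting with $A$, a description of $\beta_i$ as a power of $\alpha$ up to passing to $A\cap B_i$; the key point is that $C$ can be taken of the form $\theta_{\alpha^m}(A\cap \text{something})$, which lies in $\eta$).

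The axiom requiring the most care is (T4): if $B\in\CT$ is regular, we must produce $\af\in\CL$ with $\theta_\af(B)\in\CT$. Fix $\beta\in\CL^*$ with $\theta_\beta(B)\in\eta$. If $\beta\ne\emptyset$, write $\beta=\beta_1\gamma$; then $\theta_\gamma(\theta_{\beta_1}(B))\in\eta$, so $\theta_{\beta_1}(B)\in\CT$ and we take $\af=\beta_1$. If $\beta=\emptyset$, then $B\in\eta$ itself; since $B$ is regular and nonempty, $0<\ld_B<\infty$, so $\Delta_B\ne\emptyset$, and I must show some $\theta_\af(B)\in\CT$ with $\af\in\Delta_B$. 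Here is where the cycle structure enters: intersecting with $A$, the set $A\cap B\in\eta$ is nonempty and regular, and $\theta_\alpha(A\cap B)\in\eta$ because $(\alpha,\eta)$ is an ultrafilter cycle; writing $\alpha=\alpha_1\alpha'$, we get $\theta_{\alpha_1}(A\cap B)\in\CT$ (by the same peeling argument), and $\alpha_1\in\Delta_{A\cap B}\subseteq$ the relevant letter set. Combined with $A\cap B\subseteq B$ and (T3)-type reasoning, $\theta_{\alpha_1}(B)\supseteq\theta_{\alpha_1}(A\cap B)\in\CT$ gives $\theta_{\alpha_1}(B)\in\CT$. So (T4) holds with $\af=\alpha_1$.

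Finally, to see $(\CB/\CH,\CL,\theta)$ fails Condition (L), I claim $(\alpha,[A])$ is a cycle with no exits there. First, $[A]\ne\emptyset$ since $A\in\CT$, i.e. $A\notin\CH$. For any $[B]\subseteq[A]$ we may choose the representative $B\subseteq A$, i.e. $B\in\CI_A$; if $B\notin\CH$, then $\theta_\gamma(B)\in\eta$ for some $\gamma\in\CL^*$, and then the hypothesis on $A$ (applied, when $\gamma\ne\emptyset$, after noting $B\in\CI_A$) forces $\gamma=\alpha^k$ and $B\in\eta$, whence by Lemma~\ref{ultra filter cycle lemma}(2) $\theta_\alpha(B)\in\eta$ and moreover $\theta_\alpha(B)\subseteq\theta_\alpha(A)$; since $(\alpha,A)$-type self-similarity is \emph{not} assumed here, instead I argue: $[\theta_\alpha(B)]=[B]$ follows because both $\theta_\alpha(B)\cap A$ and $B$ lie in $\eta$ and the uniqueness hypothesis pins them down — more precisely $B=\widehat\theta_\alpha(\eta)$-compatibly — giving $\theta_\alpha([B])=[B]$ for all $[B]\subseteq[A]$, i.e. $(\alpha,[A])$ is a cycle. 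For "no exits": for each $t\le|\alpha|$ and each $\emptyset\ne[B]\subseteq\theta_{\alpha_1\cdots\alpha_t}([A])$, lift to $B\in\CI_{\theta_{\alpha_1\cdots\alpha_t}(A)}$ with $B\notin\CH$; then $\theta_{\alpha_{t+1}\cdots\alpha_{|\alpha|}}(B)$, composed around, lands in $\eta$, so $B\notin\CH$ forces (via the uniqueness hypothesis on $A$) that $B\in\CB_{\reg}$ with $\Delta_B=\{\alpha_{t+1}\}$ modulo $\CH$. I expect the genuinely delicate bookkeeping to be exactly this last point: transferring the "the only word sending a subset of $A$ into $\eta$ is a power of $\alpha$" hypothesis through the quotient to conclude $\Delta_{[B]}=\{\alpha_{t+1}\}$ for every nonzero $[B]$ along the cycle, since one must rule out \emph{extra} letters surviving in the quotient while knowing only that they vanish on representatives landing in $\eta$. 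The tool for this is (T4) together with heredity/saturation of $\CH$: any letter $\af$ with $\theta_\af(B)\notin\CH$ puts $\theta_\af(B)\in\CT$, and chasing $\theta_\af(B)$ forward to $\eta$ reactivates the uniqueness hypothesis, forcing $\af=\alpha_{t+1}$.
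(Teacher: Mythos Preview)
Your plan has the right architecture, but two steps are not yet correct.

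For (T5) you are working too hard and invoking the hypothesis on $A$ where it is not needed. Given $B_1,B_2\in\CT$ with $\theta_{\beta_i}(B_i)\in\eta$, simply set $C:=\theta_{\beta_1}(B_1)\cap\theta_{\beta_2}(B_2)$: since $\eta$ is a filter, $C\in\eta\subseteq\CT$, and $C\subseteq\theta_{\beta_i}(B_i)$ gives $B_i\ge C$. Your attempt to force each $\beta_i$ to be a power of $\alpha$ by ``intersecting with $A$'' does not work, because the hypothesis applies only to $B\in\CI_A$ with $\theta_\beta(B)\in\eta$, and there is no reason $\theta_{\beta_i}(A\cap B_i)$ should lie in $\eta$.

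The genuine gap is in the cycle argument. You correctly deduce, for nonzero $[B]\subseteq[A]$ with representative $B\in\CI_A$, that both $B$ and $\theta_\alpha(B)$ lie in $\eta$. But this does \emph{not} yield $[\theta_\alpha(B)]=[B]$: two elements of an ultrafilter need not coincide modulo $\CH$. What is needed is that the symmetric difference lies in $\CH$. The paper proves directly that $B\setminus\theta_\alpha(B)\notin\CT$ and $\theta_\alpha(B)\setminus B\notin\CT$ for every $B\in\CI_A$, each by a short contradiction: e.g.\ if $\theta_\gamma(B\setminus\theta_\alpha(B))\in\eta$ then $\theta_\gamma(B)\in\eta$, forcing $\gamma=\alpha^k$ and $B\in\eta$, whence $\theta_{\alpha\gamma}(B)\in\eta$ too, giving $\emptyset\in\eta$. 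Note that $\theta_\alpha(B)\setminus B$ need not lie in $\CI_A$, so even the implicit observation ``$\CI_A\cap\CT\subseteq\eta$'' in your sketch handles only one half.

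For ``no exits'' your final sentence is on the right track, but you must route through $A$ rather than $B$ (which is not in $\CI_A$): if $[\theta_\beta(B)]\ne[\emptyset]$ for some $\beta\ne\alpha_{t+1}$ and $[B]\subseteq[\theta_{\alpha_{[1,t]}}(A)]$, then $\theta_{\alpha_{[1,t]}\beta}(A)\in\CT$, so $\theta_{\alpha_{[1,t]}\beta\gamma}(A)\in\eta$ for some $\gamma$; since $A\in\CI_A$, the hypothesis forces $\alpha_{[1,t]}\beta\gamma=\alpha^k$, hence $\beta=\alpha_{t+1}$.
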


\begin{proof}
It is straightforward to check that $\CT$ satisfies (T0)--(T4). To show it satisfies (T5), we choose $B_1, B_2 \in \CT$. Then there exist $\bt_1, \bt_2 \in \CL^*$ such that $\theta_{\bt_1}(B_1) , \theta_{\bt_2}(B_2) \in \eta.$ Thus $\theta_{\bt_1}(B_1) \cap \theta_{\bt_2}(B_2) \in \eta$, and hence $\theta_\af(\theta_{\bt_1}(B_1) \cap \theta_{\bt_2}(B_2)) \in \eta$ since $(\af, \eta)$ is an ultrafilter cycle. It then follows that $\theta_{\bt_1}(B_1) \cap \theta_{\bt_2}(B_2) \in \CT$ and $B_i \geq \theta_{\bt_1}(B_1) \cap \theta_{\bt_2}(B_2)$ for all $i=1,2$. 

Let $\pi:\CB\to \CB/(\CB\setminus\CT)$ be the quotient map given by $\pi(B)=[B]$. We claim that $(\alpha,[A])$ is a cycle with no exit in $(\CB/(\CB\setminus\CT),\CL,\theta)$. We first show that $$B\setminus\theta_\alpha(B)\notin\CT ~\text{and}~\theta_\alpha(B)\setminus B\notin\CT$$ for any $B\in \CI_A$. For contradiction, suppose $B\setminus\theta_\alpha(B)\in\CT$. Then $\theta_\beta(B)\setminus\theta_{\alpha\beta}(B)\in\eta$ for some $\beta\in\CL^*$. It follows that $\theta_\beta(B)\in\eta$, and thus that $\beta=\alpha^k$ for some $k\in\N$. But then $\theta_{\alpha\beta}(B)=\theta_{\alpha^{k+1}}(B)=\theta_\alpha(\theta_\beta(B))\in\eta$, and $$\emptyset=(\theta_\beta(B)\setminus\theta_{\alpha\beta}(B))\cap \theta_{\alpha\beta}(B)\in\eta,$$ which is not the case. Thus, we must have that $B\setminus\theta_\alpha(B)\notin\CT$. Similarly, if $\theta_\alpha(B)\setminus B\in\CT$, then $\theta_{\alpha\beta}(B)\setminus \theta_\beta(B)\in\eta$ for some $\beta\in\CL^*$, and then $\theta_{\alpha\beta}(B)\in\eta$, which implies that $B\in\eta$, and thus that $\theta_\beta(B)\in\eta$ and $$\emptyset=(\theta_{\alpha\beta}(B)\setminus\theta_\beta(B))\cap \theta_\beta(B)\in\eta,$$ which is not the case. Thus, we must have that $\theta_\alpha(B)\setminus B\notin\CT$.

Now for any $[B] \subseteq [A]$, we see that 
$$\theta_\af([B])=\theta_\af([A\cap B])=[A \cap B]=[B].$$
This shows that $(\af, [A])$ is a cycle. To show that  $(\af, [A])$ has no exit, assume to the contrary that there are $t \leq |\af|$ and $ [\emptyset] \neq [B] \subseteq [\theta_{\af_1 \cdots \af_t}(A)]$ such that $\Delta_{[B]} :=\{\bt \in \CL: [\theta_{\bt}(B)] \neq [\emptyset]\} \neq \{\af_{t+1}\}$. Then $[\theta_{\af_1 \cdots \af_t \bt}(A)] \neq [\emptyset]$ for some $\bt \neq \af_{t+1}$, and then $\theta_{\af_1 \cdots \af_t \bt}(A) \in \CT$. Thus there is a $\gm \in \CL^*$ such that $\theta_{\gm}(\theta_{\af_1 \cdots \af_t \bt}(A)) =\theta_{\af_1 \cdots \af_t \bt \gm}(A)\in \eta$, and hence $\af_1 \cdots \af_t \bt \gm =\af^k$ for some $k \in \N$, which means that $\bt=\af_{t+1}$, a contradiction.

\end{proof}

\begin{dfn} \label{def:cyclic tail}
Let $(\CB, \CL, \theta)$ be a Boolean dynamical system. A maximal tail $\CT$ of $(\CB, \CL, \theta)$ is \emph{cyclic} (cf. \cite[p. 112]{CS}) if there is an ultrafilter cycle $(\alpha,\eta)$ such that $\CT:=\{B\in\CB:\theta_\beta(B)\in\eta\text{ for some }\beta\in\CL^*\}$ and an $A\in\eta$ such that if $\beta\in\CL^*\setminus\{\emptyset\}$, $B\in \CI_A$, and $\theta_\beta(B)\in\eta$, then $B\in\eta$ and $\beta=\alpha^k$ for some $k\in\N$.
\end{dfn}

\begin{ex}\label{ex6.5}
We continue Example~\ref{ex1}, Example~\ref{ex2}, Example~\ref{ex3}, Example~\ref{ex4}, Example~\ref{ex5}, and Example~\ref{ex6}. Recall that $x\mapsto\hat{x}$ is a bijection from $X$ to $\widehat{\CB}$ and that $\{\hat{x}:x\in X,\ x_1=3\}$ is the set of all maximal tails of $(\CB,\CL,\theta)$. We next show that these maximal tails are all cyclic.

To see this, let $x$ be an element of $X$ such that $x_1=3$. We saw in Example~\ref{ex5} that $(i,\hat{x})$ is then an ultrafilter cycle. Moreover, 
$$
\hat{x}=\{B\in\CB:\theta_\beta(B)\in\hat{x}\text{ for some }\beta\in\CL^*\}.
$$ 
Let $A:=\{y\in X:y_1=3\}$. Suppose $\beta\in\CL^*\setminus\{\emptyset\}$, $B\in\CI_A$, and $\theta_\beta(B)\in\hat{x}$. Since $\theta_\alpha(B)\notin\hat{x}$ for $\alpha\in\{1,2,t\}$ (see Example~\ref{ex2}), it follows that $\beta=i^k$ for some $k\in\N$, and thus that $B=\theta_\beta(B)\in\hat{x}$. We thus have that $\hat{x}$ is a cyclic maximal tail.
\end{ex}

Next, we show a converse to Proposition~\ref{prop:cyclic maximal tails}.

\begin{prop} \label{prop:K}
Let $(\CB, \CL, \theta)$ be a Boolean dynamical system. Suppose $\CH$ is a hereditary and saturated ideal of $\CB$ such that $(\CB/\CH,\CL,\theta)$ does not satisfy Condition (L). Then there is a cyclic maximal tail $\CT$ such that $\CT\cap\CH=\emptyset$.
\end{prop}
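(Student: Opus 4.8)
The plan is to carry out everything inside the quotient $(\CB/\CH,\CL,\theta)$ and then transport the result back to $\CB$. First I would record the transport machinery: writing $q(A)=[A]$, the map $\eta\mapsto\{[A]:A\in\eta\}$ is a bijection from the set of ultrafilters of $\CB$ disjoint from $\CH$ onto $\widehat{\CB/\CH}$, with inverse $\zeta\mapsto q^{-1}(\zeta):=\{A\in\CB:[A]\in\zeta\}$, and it satisfies $\theta_\gm(A)\in q^{-1}(\zeta)\iff\theta_\gm([A])\in\zeta$ for all $\gm\in\CL^*$; each assertion is a routine check using $[A]=[\emptyset]\iff A\in\CH$ and the ultrafilter axioms. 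Granting this, it suffices to produce an ultrafilter cycle $(\af,\zeta)$ in $\CB/\CH$ together with an $[A']\in\zeta$ such that whenever $\bt\in\CL^*\setminus\{\emptyset\}$, $[B]\in\CI_{[A']}$ and $\theta_\bt([B])\in\zeta$, then $[B]\in\zeta$ and $\bt=\af^k$ for some $k\in\N$. Indeed, taking $\eta:=q^{-1}(\zeta)$ (an ultrafilter of $\CB$ disjoint from $\CH$) and any $A''\in\CB$ with $[A'']=[A']$, Proposition~\ref{prop:cyclic maximal tails} applied to $\CB$ gives a maximal tail $\CT=\{B\in\CB:\theta_\bt(B)\in\eta\text{ for some }\bt\in\CL^*\}$, which is cyclic by Definition~\ref{def:cyclic tail} and satisfies $\CT\cap\CH=\emptyset$, since $\CH$ is hereditary and $\eta\cap\CH=\emptyset$.

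Since $\CB/\CH$ fails Condition (L) it contains a cycle with no exit; I would fix one, $(\af,[A])$, of minimal length $|\af|$, and write $\af=\delta^M$ with $\delta$ primitive. Set $V_t:=[\theta_{\af_1\cdots\af_t}(A)]$ for $0\le t\le|\af|$, so $V_0=V_{|\af|}=[A]$. Two facts follow from the no-exit hypothesis. \emph{(Tracking.)} If $[\emptyset]\ne[B]\subseteq[A]$, $\bt\ne\emptyset$ and $\theta_\bt([B])\ne[\emptyset]$, then reading the letters of $\bt$ one at a time—using that every nonempty subset of $V_t$ is regular with $\Delta$-set $\{\af_{t+1}\}$ and that $\theta_{\af^k}$ fixes every subset of $[A]$—forces $\bt$ to be a prefix of $\af\af\af\cdots$ and $\theta_\bt([B])=\theta_{\af_1\cdots\af_j}([B])$ with $j=|\bt|\bmod|\af|$. \emph{(Period.)} $V_0\cap V_j=[\emptyset]$ whenever $0<j<|\af|$ and $|\delta|\nmid j$: a nonempty $V_0\cap V_j$ would, by comparing the $\Delta$-sets of its subsets coming from $V_0$ with those coming from $V_j$ and iterating along $\af$, force $\af$ to have period $\gcd(j,|\af|)$; since $\af=\delta^M$ with $\delta$ primitive, every such period is a multiple of $|\delta|$, so $|\delta|\mid\gcd(j,|\af|)\mid j$, a contradiction.

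By the tracking fact it is enough to find $[\emptyset]\ne[A']\subseteq[A]$ with $\theta_{\af_1\cdots\af_j}([A'])\cap[A']=[\emptyset]$ for all $0<j<|\af|$ and then let $\zeta$ be any ultrafilter containing $[A']$: then $(\af,\zeta)$ is an ultrafilter cycle by Lemma~\ref{ultra filter cycle lemma}(3) (as $[A]\in\zeta$), and in the tracking conclusion $\theta_\bt([B])\subseteq\theta_{\af_1\cdots\af_j}([A'])$ is disjoint from $[A']\in\zeta$ unless $j=0$, i.e.\ unless $\bt=\af^k$, in which case $\theta_\bt([B])=[B]\in\zeta$. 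By the period fact the only $j$ requiring attention are $j=k|\delta|$ with $1\le k\le M-1$, so I need $\theta_{\delta^k}([A'])\cap[A']=[\emptyset]$ for those $k$. To arrange this, for nonempty $[B]\subseteq[A]$ put $r([B]):=\min\{1\le k\le M:\theta_{\delta^k}([B])\cap[B]\ne[\emptyset]\}$ (finite since $\theta_{\delta^M}=\theta_\af$ fixes $[B]$); $r$ does not decrease when $[B]$ shrinks, so $R:=\max\{r([B]):[\emptyset]\ne[B]\subseteq[A]\}$ is attained, say by $[B_0]$. The claim is that $R=M$, and then $[A']:=[B_0]$ is as wanted. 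If instead $R<M$, set $[C_0]:=\theta_{\delta^R}([B_0])\cap[B_0]\ne[\emptyset]$; every nonempty $[B]\subseteq[C_0]$ has $r([B])=R$, whence $[B]\subseteq\theta_{\delta^R}([B])$ (else $[B]\setminus\theta_{\delta^R}([B])$ would violate $r([B])=R$), and so $\theta_{\delta^{M-R}}([B])\subseteq\theta_{\delta^{M-R}}(\theta_{\delta^R}([B]))=\theta_{\delta^M}([B])=[B]$; iterating and using $\theta_{\delta^M}|_{\CI_{[A]}}=\mathrm{id}$ (the chain $[B]\supseteq\theta_{\delta^{M-R}}([B])\supseteq\theta_{\delta^{2(M-R)}}([B])\supseteq\cdots$ returns to $[B]$) gives $\theta_{\delta^{M-R}}|_{\CI_{[C_0]}}=\mathrm{id}$, so $(\delta^{M-R},[C_0])$ is a cycle with no exit—the no-exit data of $(\af,[A])$ transfers because $\theta_{\af_1\cdots\af_t}([C_0])\subseteq V_t$ for $t\le(M-R)|\delta|$—of length $(M-R)|\delta|<|\af|$, contradicting minimality.

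I expect the real obstacle to be precisely that claim $R=M$: that a shortest cycle with no exit in $\CB/\CH$ cannot self-intersect under powers of its primitive root. In contrast with the directed-graph setting, being a cycle is a condition on \emph{all} subsets of the base set, so a shorter ``return word'' need not by itself define a cycle; the first-return/minimality argument sketched above is the device that turns a bad self-intersection into a genuinely shorter cycle with no exit. The remaining ingredients—the transport to the quotient, the tracking and period facts, and the final assembly via Proposition~\ref{prop:cyclic maximal tails} and Definition~\ref{def:cyclic tail}—should all be routine.
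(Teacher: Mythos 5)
Your proof is correct, and its skeleton is the same as the paper's: produce a cycle with no exits $(\alpha,[A])$ in the quotient, shrink the base to some $[A']$ with $\theta_{\alpha_{[1,j]}}([A'])\cap[A']=[\emptyset]$ for $0<j<|\alpha|$, pick an ultrafilter $\zeta$ containing $[A']$, pull it back to an ultrafilter $\eta$ of $\CB$ disjoint from $\CH$, and invoke Proposition~\ref{prop:cyclic maximal tails}. The difference lies entirely in how the disjointness of the translates is obtained. The paper packages this step as Lemma~\ref{sub-cycle} and Lemma~\ref{no intersection}: it takes the minimal prefix length $j$ for which some $B\subseteq A$ makes $(\alpha_{[1,j]},B)$ a cycle and uses the contrapositive of Lemma~\ref{sub-cycle} to shrink the base iteratively, so the word itself gets replaced by a prefix. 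You instead fix a no-exit cycle of minimal total length, factor $\alpha=\delta^M$ with $\delta$ primitive, dispose of the indices $j$ with $|\delta|\nmid j$ by the periodicity argument (essentially the paper's Lemma~\ref{no exit:simple cycle}), and handle $j=k|\delta|$ by maximizing the first-return time $r([B])$; a maximum $R<M$ yields the shorter no-exit cycle $(\delta^{M-R},[C_0])$ and contradicts minimality. The two devices are interchangeable --- your verification that $[B]\subseteq\theta_{\delta^R}([B])$ for nonempty $[B]\subseteq[C_0]$ is exactly the move in the proof of Lemma~\ref{sub-cycle}, and in fact once you know $r\equiv R$ on the nonempty elements of $\CI_{[B_0]}$ you could cite that lemma directly to obtain the shorter no-exit cycle $(\delta^{R},[C_0])$ and conclude slightly faster --- but your version has the merit of making the period structure of a no-exit cycle explicit, whereas the paper's is shorter mainly because the shrinking argument is stated once as a lemma and reused elsewhere (it is also needed for Lemma~\ref{no exit:simple cycle}).
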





Before we give the proof of Proposition~\ref{prop:K}, we first introduce the following two lemmas which we shall use in the proof of Proposition~\ref{prop:K} and again later in the paper.

\begin{lem}\label{sub-cycle} 
Let $(\CB,\CL,\theta)$ be a Boolean dynamical system and suppose $(\af, A)$ is a cycle with no exits.  Fix $1 \leq k < |\af|$ and let $\af'=\af_{[1,k]}$.  Suppose that $B \cap \theta_{\af'}(B) \neq \emptyset$ for every non-empty $B \in \CB$ with $B \subseteq A$, and let $A':=A \cap \theta_{\af'}(A)$. Then $(\af', A')$ is a cycle with no exit.
\end{lem}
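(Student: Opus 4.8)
The plan is to show that under the stated hypothesis one actually has $A'=A$ and $\theta_{\af'}$ equal to the identity on $\CI_A=\{B\in\CB:B\subseteq A\}$; the no-exit conclusion for $(\af',A')$ then drops out of the no-exit conclusion for $(\af,A)$. Throughout I would write $\theta':=\theta_{\af'}$, $n:=|\af|$, and $A_t:=\theta_{\af_1\cdots\af_t}(A)$ for $t\ge 0$, reading subscripts cyclically mod $n$ (so $A_0=A$ and $A_n=\theta_\af(A)=A$); since $(\af,A)$ has no exits, a routine induction shows each $A_t$ is nonempty with $\Delta_{A_t}=\{\af_{t+1}\}$.

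The first step is an \emph{expansiveness} claim: $B\subseteq\theta'(B)$ for every $B\subseteq A$. Indeed, if $C:=B\setminus\theta'(B)\ne\emptyset$ then $C\subseteq A$, so the hypothesis gives $C\cap\theta'(C)\ne\emptyset$; but $\theta'$ is a Boolean homomorphism, so $\theta'(C)=\theta'(B)\setminus\theta'(\theta'(B))\subseteq\theta'(B)$, while $C\cap\theta'(B)=\emptyset$, a contradiction. Applying this to $B=A$ yields $A\subseteq\theta'(A)=A_k$, and hence $A'=A\cap A_k=A$.

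The second step -- which I expect to be the main obstacle -- is to upgrade expansiveness to $\theta'(B)=B$. Here I would first extract periodicity of $\af$. Feeding $A\subseteq A_k$ back into the no-exit condition: any nonempty $B\subseteq A$ sits inside both $A_0$ and $A_k$, so $\{\af_1\}=\Delta_B=\{\af_{k+1}\}$; applying $\theta_{\af_1}$ to $A\subseteq A_k$ then gives $A_1\subseteq A_{k+1}$, and an induction on $m$ produces $A_m\subseteq A_{m+k}$ and $\af_{m+1}=\af_{m+k+1}$ for all $m$ (cyclic indices). Thus the cyclic word $\af$ is invariant under rotation by $k$, therefore by $d:=\gcd(k,n)$, so $\af=\mu^{p}$ with $\mu:=\af_{[1,d]}$ and $p:=n/d\ge 2$; since $d\mid k$ we also get $\af'=\mu^{k/d}$, whence $(\af')^{p}=\mu^{p(k/d)}=\af^{k/d}$ \emph{as words}, so $(\theta')^{p}=\theta_{(\af')^p}=\theta_{\af^{k/d}}=(\theta_\af)^{k/d}$ is the identity on $\CI_A$ because $(\af,A)$ is a cycle. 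Then the chain $A\subseteq\theta'(A)\subseteq(\theta')^2(A)\subseteq\cdots\subseteq(\theta')^{p}(A)=A$ (each inclusion obtained by applying the order-preserving map $(\theta')^j$ to $A\subseteq\theta'(A)$) must consist of equalities, so $A_k=\theta'(A)=A$ and $(\theta')^j(A)=A$ for all $j$; consequently, for an arbitrary $B\subseteq A$ one has $(\theta')^j(B)\subseteq(\theta')^j(A)=A$, expansiveness applies at every stage, and $B\subseteq\theta'(B)\subseteq\cdots\subseteq(\theta')^{p}(B)=B$ forces $\theta'(B)=B$.

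Finally, since $\theta'(B)=B$ for all $B\subseteq A'=A$, the pair $(\af',A')$ is a cycle, and I would finish by checking the no-exit clause directly. Fix $0\le t\le k$ and a nonempty $B\subseteq\theta_{\af'_1\cdots\af'_t}(A')$; since $\af'_i=\af_i$ for $i\le k$ and $A'=A$, this set equals $A_t$, and for $t=k$ it equals $A_k=A$. If $t<k$, the no-exit condition for $(\af,A)$ at position $t$ gives $B\in\CB_{reg}$ with $\Delta_B=\{\af_{t+1}\}=\{\af'_{t+1}\}$; if $t=k$, then $B\subseteq A$ and the no-exit condition at position $n$ gives $B\in\CB_{reg}$ with $\Delta_B=\{\af_1\}=\{\af'_1\}$. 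Hence $(\af',A')$ is a cycle with no exit. The only subtlety to watch is keeping the cyclic indexing consistent so that each appeal to the no-exit condition for $(\af,A)$ lands at a position between $0$ and $n$; the genuine content is the periodicity extraction and the two collapsing chains in the second step.
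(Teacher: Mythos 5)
Your proof is correct and follows essentially the same route as the paper's: the disjointness trick $(B\setminus\theta_{\af'}(B))\cap\theta_{\af'}(B\setminus\theta_{\af'}(B))=\emptyset$ to get $B\subseteq\theta_{\af'}(B)$, then periodicity of $\af$ under rotation by $k$ so that a power of $\theta_{\af'}$ equals a power of $\theta_\af$ and the increasing chain collapses. You merely supply details the paper leaves implicit (the induction giving $\af_{i+k}=\af_i$, the observation $A'=A$, and the explicit verification of the no-exit clause).
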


\begin{proof} 
It suffices to show that $ B=\theta_{\af'}(B)$ for all $B \subseteq A'$. Choose a non-empty  $B \in \CB$ such that  $B \subseteq A'$. 
Then
$$ (B \setminus \theta_{\af'}(B)) \cap \theta_{\af'}(B \setminus \theta_{\af'}(B))= \emptyset.$$
From the assumption that $B' \cap \theta_{\af'}(B') \neq \emptyset$ for every non-empty $ B' \subseteq A$, we thus get that $B \setminus \theta_{\af'}(B) = \emptyset$. It follows that $ B \subseteq \theta_{\af'}(B)$. 

Since $(\af, A')$ is a cycle with no exit and $B' \cap \theta_{\af'}(B') \neq \emptyset$ for every non-empty $ B'$ with $B' \subseteq A$, it follows that $\af_{k+i}=\af_i$ for every $i$, where the indices are computed modulo $|\alpha|$. We thus have that $(\alpha')^{|\alpha|}=\alpha^k$, and hence $\theta_{\alpha'}(B)\subseteq \theta_{(\alpha')^2}(B)\subseteq\dots\subseteq \theta_{(\alpha')^{|\alpha|}}(B)=B$. So, $B = \theta_{\af'}(B)$. 
\end{proof}

\begin{lem}\label{no intersection} 
Let $(\CB,\CL,\theta)$ be a Boolean dynamical system and suppose $(\af, A)$ is a cycle with no exits. Then there exist $ 1 \leq j \leq  |\af|$ and $B \subseteq A$ such that $(\beta, B)$ is a cycle with no exits and $B \cap \theta_{\beta_{[1,k]}}(B) = \emptyset$ for all $1 \leq k < j$, where $\beta=\af_{[1,j]}$.
\end{lem}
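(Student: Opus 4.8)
The plan is to iterate Lemma~\ref{sub-cycle}. Starting from the cycle with no exits $(\af,A)$, set $j_0:=|\af|$ and $\beta^{(0)}:=\af$, $B_0:=A$. If already $B_0\cap\theta_{\beta^{(0)}_{[1,k]}}(B_0)=\emptyset$ for all $1\le k<j_0$, we are done. Otherwise there is some smallest $k$ with $1\le k<j_0$ for which there exists a non-empty $B\subseteq B_0$ with $B\cap\theta_{\beta^{(0)}_{[1,k]}}(B)\ne\emptyset$; the point I would establish first is that after shrinking $B_0$ appropriately we may assume this non-emptiness holds for \emph{every} non-empty subset, i.e.\ that the hypothesis of Lemma~\ref{sub-cycle} is met. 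This uses that $(\af,A)$ has no exits, so every non-empty $B\subseteq A$ lies in $\CB_{\reg}$ with $\Delta_B$ a single letter, and a standard maximality/Zorn-type argument on the compact open set $Z(A)\subseteq\widehat{\CB}$ (or directly on ideals of $\CI_A$) lets us pass to a non-empty $A_0\subseteq A$ on which the intersection is always non-empty. Then Lemma~\ref{sub-cycle} applies with $\af'=\af_{[1,k]}$, $A':=A_0\cap\theta_{\af'}(A_0)$, producing a shorter cycle with no exits $(\af',A')$ with $|\af'|=k<j_0$.

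The second step is to run this as a descent: each application strictly decreases the length of the cycle word, so after finitely many steps we reach a cycle with no exits $(\beta,B)$, $\beta=\af_{[1,j]}$ for some $1\le j\le|\af|$ and $B\subseteq A$, such that for \emph{no} $1\le k<j$ does there exist a non-empty subset of $B$ meeting its own $\theta_{\beta_{[1,k]}}$-image — equivalently (by taking $B$ itself, which is non-empty) $B\cap\theta_{\beta_{[1,k]}}(B)=\emptyset$ for all $1\le k<j$. That is exactly the asserted conclusion. One must check that the word $\beta$ at each stage remains an initial segment of $\af$: this is transparent since we always truncate, and $\af_{[1,k]}$ of $\af_{[1,j]}$ is $\af_{[1,k]}$.

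The main obstacle I expect is the first step — upgrading "there is some non-empty $B\subseteq A$ with $B\cap\theta_{\af'}(B)\ne\emptyset$" to "every non-empty $B\subseteq A_0$ has this property" for a suitable smaller $A_0$. The natural move is: among the $k$'s, pick the smallest one for which such a $B$ exists, and among such $B$'s use that the collection of $B\subseteq A$ with $B\cap\theta_{\af_{[1,k]}}(B)=\emptyset$ forms (the trace on $\CI_A$ of) an ideal; passing to the complement of that ideal inside $Z(A)$ gives a non-empty compact open $Z(A_0)$ on which the hypothesis of Lemma~\ref{sub-cycle} holds. One has to confirm that $(\af,A_0)$ is still a cycle with no exits — but any non-empty subset of $A$ inherits this from $(\af,A)$, since the defining condition $B=\theta_\af(B)$ and the no-exit condition are both hereditary downward along $\subseteq$. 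With that in hand the induction is routine, and I would present it as a finite descent on $|\af|$ rather than formal strong induction.
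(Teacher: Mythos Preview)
Your descent strategy has a genuine gap at the upgrading step, and it cannot be repaired along the lines you sketch. The set $\{B\in\CI_A:B\cap\theta_{\af'}(B)=\emptyset\}$ is \emph{not} an ideal: it is downward closed, but it need not be closed under unions. Worse, there may be no non-empty $A_0\subseteq A$ at all on which the hypothesis of Lemma~\ref{sub-cycle} holds, even though some $B\subseteq A$ meets $\theta_{\af'}(B)$. Take $\CB=P(\{a,b\})$, $\CL=\{e\}$, and $\theta_e$ swapping $\{a\}$ and $\{b\}$; then $(ee,\{a,b\})$ is a cycle with no exits, and with $\af'=e$ one has $\{a,b\}\cap\theta_e(\{a,b\})=\{a,b\}\ne\emptyset$, yet $\{a\}\cap\theta_e(\{a\})=\emptyset=\{b\}\cap\theta_e(\{b\})$. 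So every non-empty $A_0$ contains a non-empty subset disjoint from its $\theta_e$-image, the hypothesis of Lemma~\ref{sub-cycle} fails on every $A_0$, and no Zorn or compactness argument can manufacture one. In this example your procedure would try to descend to a length-$1$ cycle that does not exist; the correct output is $j=2$, $B=\{a\}$.

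The paper's argument runs in the opposite direction and thereby avoids the obstacle entirely. It first fixes $j$ as the \emph{minimum} length for which some $B\subseteq A$ gives a cycle $(\af_{[1,j]},B)$, and then uses the \emph{contrapositive} of Lemma~\ref{sub-cycle}: for each $k<j$, minimality of $j$ forbids any sub-cycle of length $k$, so the hypothesis of Lemma~\ref{sub-cycle} must fail on the current set, producing a non-empty subset disjoint from its $\theta_{\af_{[1,k]}}$-image. Iterating for $k=j-1,\dots,1$ yields a nested chain whose bottom is the desired $B$. Thus the key move is to use Lemma~\ref{sub-cycle} to extract \emph{disjoint} pieces from the failure of its hypothesis, rather than to use its conclusion to shorten the word.
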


 \begin{proof} 
 Let 
 \begin{equation*}
 j:=\min\{k: 1\le k\le |\af|\text{ and there exists } B\in\mathcal{I}_A\text{ such that } (\af_{[1,k]},B)\text{ is a cycle}\}.
 \end{equation*}
 Choose $A_j\in\mathcal{I}_A$ such that $(\af_{[1,j]},A_j)$ is a cycle. By applying Lemma~\ref{sub-cycle} inductively on $(\af_{[1,j]},A_{k+1})$, where $k=j-1,j-2,\dots,1$, we see that there are non-empty $A_j\supseteq A_{j-1}\supseteq \dots\supseteq A_1$ in $\mathcal{B}$ such that $A_k\cap \theta_{\alpha_{[1,k]}}(A_j)=\emptyset$ for each $1\le k< j$. Then $B:=A_1$ has the desired properties.
  \end{proof}  

\begin{proof}[Proof of Proposition~\ref{prop:K}]
It follows from Lemma \ref{no intersection} that there is a cycle $(\af,[A])$ with no exits in $(\CB/\CH,\CL,\theta)$  such that $[A]\cap\theta_{\alpha_{[1,k]}}([A])=\emptyset$ for $1\le k<|\alpha|$. Let $\eta'$ be an ultrafilter in $\CB/\CH$ such that $[A]\in \eta'$. By Lemma \ref{ultra filter cycle lemma}(3), we then have that $(\af, \eta')$ is ultrafilter cycle in $(\CB/\CH,\CL,\theta)$.    Let $$\eta:=\{B\in\CB:[B]\in \eta'\}.$$ Then $\eta$ is an ultrafilter in $\CB$ and $A \in \eta$.

If $B\in\eta$, then $[\theta_\alpha(B)]=\theta_\alpha([B])\in \eta'$, and hence $\theta_\alpha(B)\in\eta$. This shows that $(\alpha,\eta)$ is an ultrafilter cycle.
Suppose $\beta\in\CL^*\setminus\{\emptyset\}$, $B\in \CI_A$, and $\theta_\beta(B)\in\eta$. Then $\theta_\beta([B])=[\theta_\beta(B)]\in \eta'$. Since also $[A]\in\eta'$, it follows that $[A]\cap \theta_\beta([B])\neq\emptyset$. Since $[B] \subseteq [A]$, we thus have $$  \emptyset \neq  [A]\cap \theta_\beta([B]) \subseteq  [A]\cap\theta_\beta([A]).$$
Using that $(\af,[A])$ is a cycle with no exits and $[A]\cap\theta_{\alpha_{[1,k]}}([A])=\emptyset$ for $1\le k<|\alpha|$, we see that $\beta=\alpha^k$ for some $k\in\N$. Thus $$[\theta_\beta(B)] =\theta_\beta([B])=[B]\in \eta'.$$ 
This shows that $B\in\eta$.

We thus have that $\CT:=\{B\in\CB:\theta_\beta(B)\in\eta\text{ for some }\beta\in\CL^*\}$ is a cyclic maximal tail. Suppose $B\in \CT\cap\CH$. Since $B\in\CT$, we have that $\theta_\beta([B])\in\eta'$ for some $\beta\in\CL^*$. On the other hand, since $B\in\CH$ we also have that $[B]=\emptyset$, a contradiction. Thus, $\CT\cap\CH=\emptyset$. 
\end{proof}

\section{Condition (K)}\label{Condition (K)}

Motivated by Proposition~\ref{prop:cyclic maximal tails} and Proposition~\ref{prop:K} we make the following definition.

\begin{dfn}\label{def:cond(K)}
We say that a Boolean dynamical system $(\CB, \CL, \theta)$ satisfies Condition (K) if there is no pair $((\alpha,\eta),A)$ where $(\alpha,\eta)$ is an ultrafilter cycle and $A\in\eta$ such that if $\beta\in\CL^*\setminus\{\emptyset\}$, $B\in \CI_A$, and $\theta_\beta(B)\in\eta$, then $B\in\eta$ and $\beta=\alpha^k$ for some $k\in\N$.
\end{dfn}

\begin{remark}
It immediately follows from Proposition~\ref{prop:cyclic maximal tails} and Definition~\ref{def:cyclic tail} that a Boolean dynamical system satisfies Condition (K) if and only if it has no cyclic maximal tails.
\end{remark}

\begin{ex}\label{ex7}
We continue Example~\ref{ex1}, Example~\ref{ex2}, Example~\ref{ex3}, Example~\ref{ex4}, Example~\ref{ex5}, Example~\ref{ex6}, and Example~\ref{ex6.5}. We saw in Example~\ref{ex6.5} that $(\CB, \CL, \theta)$ has cyclic maximal tail. It therefore does not satisfy Condition (K). 


\end{ex}

\vskip 0.5pc 
We shall see in Theorem ~\ref{equivalent:(K)} that a locally finite Boolean dynamical system with countable $\CB$ and ${\CL}$ satisfies Condition (K) if and only if every ideal in $C^*(\CB, \CL, \theta)$ is gauge-invariant, but we shall first look at how our Condition (K) for Boolean dynamical systems is related to Condition (K) for directed graphs. This will also provides us with examples of Boolean dynamical systems that satisfy Condition (K).

We refer the reader to \cite{BHRS, BPRS, DT, KPR, KPRR, R} among others for the definitions of directed graphs and their $C^*$-algebras. Let $E=(E^0,E^1, r,s)$ be a directed graph. We shall define two Boolean dynamical systems $(\mathcal{B}_E,\mathcal{L}_E,\theta_E)$ and $(\mathcal{B}_{\partial E},\mathcal{L}_{\partial E},\theta_{\partial E})$, both of which have the same $C^*$-algebra as $E$.

We let $\CL_E:=E^1$, let $\CB_E$ be the set of finite subsets of $E^0$, and define for each $e\in\CL_E$ a map $(\theta_E)_e:\CB_E\to\CB_E$ by
\begin{equation*}
(\theta_E)_e(A)=
\begin{cases}
\{r(e)\}&\text{if }s(e)\in A,\\
\emptyset&\text{if }s(e)\notin A.
\end{cases}
\end{equation*}
Then $(\mathcal{B}_E,\mathcal{L}_E,\theta_E)$ is a Boolean dynamical system. Let $\{p_A,\ s_e:A\in \CB_E,\ e\in\CL_E\}$ be a universal Cuntz--Krieger representation of $(\mathcal{B}_E,\mathcal{L}_E,\theta_E)$. Then $\{s_e, p_{\{v\}}: e \in E^1, v \in E^0\}$ is a Cuntz-Krieger $E$-family. It therefore follows from the universal property of $C^*(E)$ that there is $*$-homomorphism $\phi: C^*(E) \to C^*(\mathcal{B}_E,\mathcal{L}_E,\theta_E)$ that maps $s_e$ to  $s_e$ and  $p_v$ to  $p_{\{v\}}$. Since $p_A=\sum_{v\in A}p_{\{v\}}$ for $A\in\CB_E$, it follows that $\phi$ is onto, and it is injective by the gauge-invariant  uniqueness theorem \cite[Theorem 2.1]{BHRS}, Remark~\ref{basics}(1) and the fact that $p_A\ne 0$ for $A\ne\emptyset$ (the latter follows for instance by an application of Theorem~\ref{CK uniqueness thm} to the identity map of $C^*(\mathcal{B}_E,\mathcal{L}_E,\theta_E)$). Thus, $C^*(\mathcal{B}_E,\mathcal{L}_E,\theta_E)$ is isomorphic to $C^*(E)$.


 Moreover, the map
\begin{equation*}
v\mapsto \hat{v}:=\{A\in\CB_E:v\in A\}
\end{equation*}
is a bijection between $E^0$ and $\widehat{\CB}_E$ such that 
\begin{equation*}
\hat{\theta}_e(\hat{v})=
	\begin{cases}
	\widehat{s(e)}&\text{if }r(e)=v,\\
	\emptyset&\text{if }r(e)\ne v,
	\end{cases}
\end{equation*}
for $v\in E^0$ and $e\in\CL_E$.

Let $E^0_{\sing}:=\{v\in E^0: s^{-1}(v)\text{ is empty or infinite}\}$ be the set of singular vertices. For $n\in\N$, let 
$$E^n:=\{x_1x_2 \dots x_n\in (E^1)^n: r(x_i)=s(x_{i+1}) ~\text{for all}~ i\},$$
and define $E^*=\cup_{n \geq 0} E^n$ to be the set of all finite paths, where we regard a vertex in $E^0$ as a path of length 0. 
Similarly, we let 
$$E^\infty:=\{x_1x_2 \dots \in (E^1)^\N: r(x_i)=s(x_{i+1}) ~\text{for all}~ i\}.$$
The range and source maps $r$ and $s$ extend to $E^*$ in the obvious way, and $s$ extends to $E^\infty$. We write $|u|=n$ if $u\in E^n$. The boundary path space of $E$ is the space
$$\partial E:=E^\infty\cup\{u\in E^*:r(u)\in E^0_{\sing}\}$$
equipped with the topology for which the generalized cylinder sets
\begin{multline*}
Z(\af\setminus F):=\{ x \in \partial E: |\af|\le |x|,\ s(x)=\af\text{ if }|\af|=0,\\ x_1=\af_1, \dots, x_{|\af|}=\af_{|\af|} \text{ if }|\af|>0,\ \text{and either }|x|=|\af|\text{ or }x_{|\af|+1}\notin F \}
\end{multline*}
parametrized by pairs $(\af,F)$ where $\af \in E^*$ and $F$ is a finite subset of $s^{-1}(r(\af))$, form a basis of compact open sets, so that  $\partial E$ is locally compact Hausdorff space  (\cite[Theorems 2.1 and 2.2]{Web}).

We let $\CL_{\partial E}:=E^1$, define $\mathcal{B}_{\partial E}$ the set of compact open subsets of the boundary path space $\partial E$, and define for each $e\in\mathcal{L}_{\partial E}$ a Boolean homomorphism $(\theta_{\partial E})_e:\mathcal{B}_{\partial E}\to\mathcal{B}_{\partial E}$ by setting 
$$(\theta_{\partial E})_e(A):=\{x\in \partial E:ex\in A\}.$$ 
Then $(\mathcal{B}_{\partial E},\mathcal{L}_{\partial E},\theta_{\partial E})$ is a Boolean dynamical system. 

Let $\{p_A,\ s_e:A\in \CB_{\partial E},\ e\in\CL_{\partial E}\}$ be a universal Cuntz--Krieger representation of $(\mathcal{B}_{\partial E},\mathcal{L}_{\partial E},\theta_{\partial E})$. Then $\{s_e, p_{Z(v\setminus\emptyset)}: e \in E^1, v \in E^0\}$ is a Cuntz-Krieger $E$-family. It therefore follows from the universal property of $C^*(E)$ that there is $*$-homomorphism $\phi: C^*(E) \to C^*(\mathcal{B}_{\partial E},\mathcal{L}_{\partial E},\theta_{\partial E})$ that maps $s_e$ to  $s_e$ and  $p_v$ to  $p_{Z(v\setminus\emptyset)}$. 

It follows from the properties of $\{p_A,\ s_e:A\in \CB_{\partial E},\ e\in\CL_{\partial E}\}$ that if $v\in E^0$ and $F$ is a finite subset of $s^{-1}(v)$, then $p_{Z(v\setminus F)}=p_{Z(v\setminus\emptyset)}-\sum_{e\in F}s_es_e^*$, and if $\alpha\in E^*\setminus E^0$ and $F$ is a finite subset of $s^{-1}(r(\alpha))$, then $p_{Z(\alpha\setminus F)}=s_\alpha s_\alpha^*-\sum_{e\in F}s_{\alpha e}s_{\alpha e}^*$. Since $\CB_{\partial E}$ is generated by elements of the form $Z(\alpha\setminus F)$ where $\alpha\in E^*$ and $F$ is a finite subset of $s^{-1}(r(\alpha))$, it follows that $\{p_A,\ s_e:A\in \CB_{\partial E},\ e\in\CL_{\partial E}\}$ is in the image of $\phi$, and thus that $\phi$ is surjective. It follows from the gauge-invariant  uniqueness theorem \cite[Theorem 2.1]{BHRS}, Remark~\ref{basics}(1) and the fact that $p_A\ne 0$ for $A\ne\emptyset$ (the latter follows for instance by an application of Theorem~\ref{CK uniqueness thm} to the identity map of $C^*(\mathcal{B}_{\partial E},\mathcal{L}_{\partial E},\theta_{\partial E})$) that $\phi$ is injective. Thus, $C^*(\mathcal{B}_{\partial E},\mathcal{L}_{\partial E},\theta_{\partial E})$ is isomorphic to $C^*(E)$.


 Moreover, the map
\begin{equation*}
x \in \partial E \mapsto \hat{x}:=\{A \in \CB_{\partial E} : x \in A\}
\end{equation*}
is a bijection between $\partial E$ and $\widehat{\CB}_{\partial E}$. Note that if $\bt \in \CL_{\partial E}^*\setminus\{\emptyset\}=E^*\setminus E^0$, then
$$(\hat{\theta}_{\partial E})_\beta(\hat{x})= 
	\left\{ \begin{array}{ll}
        \widehat{\beta x} &  ~~\hbox{if\ }  s(x)=r(\beta), \\
        \emptyset &  ~~\hbox{if\ }  s(x)\ne r(\beta).
    \end{array}\right.
$$

Recall that the graph $E$ satisfies Condition (K) if and only if whenever $v\in E^0$, $\alpha\in E^*\setminus E^0$, and $s(\alpha)=r(\alpha)=v$, then there is a $\beta\in E^*\setminus E^0$ such that $s(\beta)=r(\beta)=v$ and $\beta\ne\alpha^k$  for  all $k\in\N$. 

\begin{remark}\label{graph-BDS} 
Let $E$, $(\mathcal{B}_E,\mathcal{L}_E,\theta_E)$, and $(\mathcal{B}_{\partial E},\mathcal{L}_{\partial E},\theta_{\partial E})$ be as above.
\begin{enumerate}
\item[(a)] If $v\in E^0$ and $\alpha\in\CL_E^*$, then $(\alpha,\hat{v})$ is an ultrafilter cycle in $(\mathcal{B}_E,\mathcal{L}_E,\theta_E)$ if and only if $s(\alpha)=r(\alpha)=v$.

\vskip 0.2pc
\item[(b)] If $x\in\partial E$ and $\alpha\in \mathcal{L}_{\partial E}^*$, then $(\alpha,\hat{x})$ is an ultrafilter cycle in $(\mathcal{B}_{\partial E},\mathcal{L}_{\partial E},\theta_{\partial E})$ if and only if $x=\alpha^\infty$.

\vskip 0.2pc
\item[(c)] $(\mathcal{B}_E,\mathcal{L}_E,\theta_E)$ satisfies Condition (K) if and only if $E$ satisfies Condition (K).

\vskip 0.2pc
\item[(d)] $(\mathcal{B}_{\partial E},\mathcal{L}_{\partial E},\theta_{\partial E})$ satisfies Condition (K) if and only if $E$ satisfies Condition (K).
\vskip 0.5pc
\end{enumerate}
\end{remark}

\begin{proof}
(a) and (b) easily follow from Lemma~\ref{ultra filter cycle lemma}(2).

(c): Suppose first that $(\mathcal{B}_E,\mathcal{L}_E,\theta_E)$ satisfies Condition (K), and assume that $v\in E^0$, $\alpha\in E^*\setminus E^0$, and $s(\alpha)=r(\alpha)=v$. Then $(\alpha,\hat{v})$ is an ultrafilter cycle and $\{v\}\in\hat{v}$. Since we are assuming that $(\mathcal{B}_E,\mathcal{L}_E,\theta_E)$ satisfies Condition (K), it therefore follows that there is a $B\in\CI_{\{v\}}$ and a $\beta\in\CL^*\setminus\{\emptyset\}$ such that $(\theta_E)_\beta(B)\in\hat{v}$, and either $B\notin\hat{v}$ or $\beta\ne\alpha^k$ for any $k\in\N$. The conditions $B\in\CI_{\{v\}}$ and $(\theta_E)_\beta(B)\in\hat{v}$ together imply that $B=\{v\}$ and $s(\beta)=r(\beta)=v$. We therefore have that $\beta\ne\alpha^k$ for any $k\in\N$ which shows that $E$ satisfies Condition (K).

Conversely, suppose $E$ satisfies Condition (K), and assume that $(\alpha,\hat{v})$ is an ultrafilter cycle in $(\mathcal{B}_E,\mathcal{L}_E,\theta_E)$ and $A\in\hat{v}$. Then $s(\alpha)=r(\alpha)=v$. Since we are assuming that $E$ satisfies Condition (K), it follows that there is a $\beta\in E^*\setminus E^0$ such that $s(\beta)=r(\beta)=v$ and $\beta\ne\alpha^k$ for any $k\in\N$. Then $\beta\in\CL_E^*\setminus\{\emptyset\}$, $\{v\}\in\CI_A$, $(\theta_E)_\beta(\{v\})=\{v\}\in\hat{v}$, and $\beta\ne\alpha^k$ for any $k\in\N$; which shows that $(\mathcal{B}_E,\mathcal{L}_E,\theta_E)$ satisfies Condition (K).

(d): Suppose first that $(\mathcal{B}_{\partial E},\mathcal{L}_{\partial E},\theta_{\partial E})$ satisfies Condition (K), and assume that $v\in E^0$, $\alpha\in E^*\setminus E^0$, and $s(\alpha)=r(\alpha)=v$. Then $x:=\alpha^\infty\in\partial E$ and $(\alpha,\hat{x})$ is an ultrafilter cycle in $(\mathcal{B}_{\partial E},\mathcal{L}_{\partial E},\theta_{\partial E})$. Let $A:=Z(\alpha\setminus\emptyset)$. Then $A\in\hat{x}$. Since we are assuming that $(\mathcal{B}_{\partial E},\mathcal{L}_{\partial E},\theta_{\partial E})$ satisfies Condition (K), it therefore follows that there is a $B\in\CI_A$ and a $\beta\in\CL^*\setminus\{\emptyset\}$ such that $(\theta_{\partial E})_\beta(B)\in\hat{x}$, and either $B\notin\hat{x}$ or $\beta\ne\alpha^k$ for any $k\in\N$. It follows from $(\theta_{\partial E})_\beta(B)\in\hat{x}$ that $\beta x\in B$, and thus that $r(\beta)=s(x)=s(\alpha)=v$. Since $B\in\CI_A$, we also have that $\beta x\in B\subseteq A=Z(\alpha\setminus\emptyset)$ and thus $s(\beta)=s(\alpha)=v$. If $\beta=\alpha^k$ for some $k\in\N$, then $x=\beta x\in B$, so that cannot be the case. Thus, $\beta\ne\alpha^k$ for any $k\in\N$. This shows that $E$ satisfies Condition (K).

Conversely, suppose $E$ satisfies Condition (K), and assume that $(\alpha,\hat{x})$ is an ultrafilter cycle in $(\mathcal{B}_{\partial E},\mathcal{L}_{\partial E},\theta_{\partial E})$ and $A\in\hat{x}$. Then $s(\alpha)=r(\alpha)$ and $x=\alpha^\infty$. Since we are assuming that $E$ satisfies Condition (K), it follows that there is a $\beta\in E^*\setminus E^0$ such that $s(\beta)=r(\beta)=v$ and $\beta\ne\alpha^k$ for any $k\in\N$. Since $A\in\hat{x}$, we have that $A$ is an open neighborhood of $x$ in $\partial E$. There is therefore an $n\in\N$ such that $Z(\alpha^n\setminus\emptyset)\subseteq A$. Let $\gamma:=\alpha^n\beta$ and $B:=Z(\gm \setminus\emptyset)$. Then $\gamma\in\CL_{\partial E}^*\setminus\{\emptyset\}$, $B\in\CI_A$, $(\theta_{\partial E})_\gamma(B)\in\hat{x}$, and $\gamma\ne\alpha^k$ for any $k\in\N$. This shows that $(\mathcal{B}_{\partial E},\mathcal{L}_{\partial E},\theta_{\partial E})$ satisfies Condition (K).
\end{proof}

Remark~\ref{graph-BDS} might make one think that Condition (K) for Boolean dynamical systems would be equivalent to one of the two conditions in the next lemma. However, we shall in Proposition~\ref{prop:graphK} and Remark~\ref{remark:weak} see that the two conditions in Lemma~\ref{strong K} are, in general, strictly stronger than Condition (K).

\begin{lem}\label{strong K} Let  $(\CB, \CL, \theta)$ be a Boolean dynamical system. The following are equivalent. 
\begin{enumerate}
\item If $(\alpha,\eta)$ is an ultrafilter cycle, then there exists $\bt \in \CL^*$ such that $(\bt, \eta)$ is an ultrafilter cycle and $\bt \neq \af^k$ for all $k \in \N$.
\item If $\af \in \CL^*\setminus\{\emptyset\}$, $\eta\in \widehat{\CB}$, and $\eta=\widehat{\theta}_{\af}(\eta)$, then there exists $\bt \in \CL^*\setminus\{\emptyset\}$ such that $\eta=\widehat{\theta}_{\bt}(\eta)$ and $\bt \neq \af^k$ for all $k \in \N$.
\end{enumerate}
\end{lem}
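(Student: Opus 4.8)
The plan is to observe that, via Lemma~\ref{ultra filter cycle lemma}(2), the two statements are the same assertion written in two equivalent languages, so the proof is really just a translation. The key dictionary entry is: for $\gm\in\CL^*\setminus\{\emptyset\}$ and $\eta\in\widehat{\CB}$, the pair $(\gm,\eta)$ is an ultrafilter cycle if and only if $\eta=\widehat{\theta}_\gm(\eta)$ (this is Lemma~\ref{ultra filter cycle lemma}(2), and it already takes care of the fact that $\widehat{\theta}_\gm$ is only defined on $\widehat{\CR}_\gm$). One further point to keep in mind is that an ultrafilter cycle has, by definition, its first coordinate in $\CL^*\setminus\{\emptyset\}$; hence in (1) the existential quantifier ``$\bt\in\CL^*$ with $(\bt,\eta)$ an ultrafilter cycle'' effectively ranges over exactly the $\bt$'s considered in (2), namely $\bt\in\CL^*\setminus\{\emptyset\}$ with $\eta=\widehat{\theta}_\bt(\eta)$.

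For (1)$\Rightarrow$(2), I would begin with $\af\in\CL^*\setminus\{\emptyset\}$ and $\eta\in\widehat{\CB}$ with $\eta=\widehat{\theta}_\af(\eta)$; by Lemma~\ref{ultra filter cycle lemma}(2) this means $(\af,\eta)$ is an ultrafilter cycle, so (1) supplies $\bt\in\CL^*$ with $(\bt,\eta)$ an ultrafilter cycle and $\bt\ne\af^k$ for all $k\in\N$. Since $(\bt,\eta)$ is an ultrafilter cycle we automatically have $\bt\in\CL^*\setminus\{\emptyset\}$, and a second application of Lemma~\ref{ultra filter cycle lemma}(2) gives $\eta=\widehat{\theta}_\bt(\eta)$, which is precisely what (2) demands.

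For (2)$\Rightarrow$(1), I would start from an ultrafilter cycle $(\af,\eta)$; then $\af\in\CL^*\setminus\{\emptyset\}$ and, by Lemma~\ref{ultra filter cycle lemma}(2), $\eta=\widehat{\theta}_\af(\eta)$. Applying (2) produces $\bt\in\CL^*\setminus\{\emptyset\}$ with $\eta=\widehat{\theta}_\bt(\eta)$ and $\bt\ne\af^k$ for all $k\in\N$, and one last invocation of Lemma~\ref{ultra filter cycle lemma}(2) shows $(\bt,\eta)$ is an ultrafilter cycle. Since $\bt\in\CL^*$, this is exactly the conclusion of (1).

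I do not expect any genuine obstacle in this argument. The only place where a little care is needed is the bookkeeping about whether $\bt=\emptyset$ is permitted, and this is resolved simply by noting that the phrase ``$(\bt,\eta)$ is an ultrafilter cycle'' already encodes $\bt\ne\emptyset$, so the two formulations are equivalent outright rather than merely one implying the other.
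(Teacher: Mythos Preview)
Your proof is correct and follows exactly the same approach as the paper, which simply says ``Follows from Lemma~\ref{ultra filter cycle lemma}(2).'' You have merely made explicit the translation that the paper leaves to the reader, including the small bookkeeping point about $\bt\ne\emptyset$.
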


\begin{proof} Follows from Lemma~\ref{ultra filter cycle lemma}(2).
\end{proof}

\begin{prop} \label{prop:graphK}
Let  $(\CB, \CL, \theta)$ be a Boolean dynamical system. If $(\CB, \CL,\theta)$ satisfies condition (1) in Lemma~\ref{strong K}, then it satisfies Condition (K).
\end{prop}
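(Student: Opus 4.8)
The strategy is a direct contrapositive argument. Suppose $(\CB,\CL,\theta)$ does \emph{not} satisfy Condition (K). By Definition~\ref{def:cond(K)}, there is a pair $((\af,\eta),A)$ where $(\af,\eta)$ is an ultrafilter cycle, $A\in\eta$, and with the property that whenever $\bt\in\CL^*\setminus\{\emptyset\}$, $B\in\CI_A$, and $\theta_\bt(B)\in\eta$, then $B\in\eta$ and $\bt=\af^k$ for some $k\in\N$. I want to produce from this a violation of condition~(1) in Lemma~\ref{strong K}, i.e.\ I must show that the only $\bt\in\CL^*$ with $(\bt,\eta)$ an ultrafilter cycle are the powers $\af^k$.

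So let $\bt\in\CL^*\setminus\{\emptyset\}$ be arbitrary with $(\bt,\eta)$ an ultrafilter cycle; I must show $\bt=\af^k$ for some $k$. The key point is to feed the right element into the rigidity hypothesis on $A$. Since $(\bt,\eta)$ is an ultrafilter cycle, $\theta_\bt(B)\in\eta$ for every $B\in\eta$; in particular, taking $B:=A\in\eta$ gives $\theta_\bt(A)\in\eta$. But I need an element of $\CI_A$, not $A$ itself. The natural candidate is $B:=A\cap\theta_\bt(A)$ — wait, that need not lie in $\CI_A$ in a useful way either since $\theta_\bt(A\cap\theta_\bt(A))$ may not obviously be in $\eta$. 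Better: since $(\bt,\eta)$ is an ultrafilter cycle, for \emph{any} $C\in\eta$ we have $\theta_\bt(C)\in\eta$, and since $\eta$ is a filter, $A\cap C\in\eta$. Apply this with $C:=A$: then $A\cap A=A$, not helpful. The clean move is instead: take $B := A$, note $\theta_\bt(A)\in\eta$, so the hypothesis with this $B=A\in\CI_A$ forces $\bt=\af^k$. Indeed $A\in\CI_A$ trivially since $A\subseteq A$, and $A\in\eta$, so the hypothesis applies verbatim: $\theta_\bt(A)\in\eta$ implies $A\in\eta$ (already known) and $\bt=\af^k$ for some $k\in\N$. That is exactly what is needed.

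Hence every $\bt\in\CL^*\setminus\{\emptyset\}$ for which $(\bt,\eta)$ is an ultrafilter cycle satisfies $\bt=\af^k$ for some $k\in\N$, so there is no $\bt\in\CL^*$ with $(\bt,\eta)$ an ultrafilter cycle and $\bt\ne\af^k$ for all $k$. (Note $\bt=\emptyset$ would give $\widehat\theta_\emptyset(\eta)=\eta$ trivially but condition~(1) asks for $\bt\in\CL^*$ with $\bt\ne\af^k$ for all $k\in\N$; since $\af^0=\emptyset$ by the conventions in the excerpt, $\bt=\emptyset$ is also excluded.) Since $(\af,\eta)$ is itself an ultrafilter cycle, this shows condition~(1) of Lemma~\ref{strong K} fails. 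By contraposition, if $(\CB,\CL,\theta)$ satisfies condition~(1) of Lemma~\ref{strong K}, then it satisfies Condition (K).

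The argument is essentially bookkeeping; the only thing to be careful about — the part I'd flag as the "obstacle", though it is minor — is making sure the element one plugs into the rigidity clause of Definition~\ref{def:cond(K)} genuinely lies in $\CI_A$ and in $\eta$, and reconciling the $k\in\N$ versus $k=0$ (empty word) conventions so that the negation of condition~(1) is stated correctly. Using $B=A$ itself sidesteps any subtlety, since $A\subseteq A$ gives $A\in\CI_A$ immediately.
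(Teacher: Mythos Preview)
Your argument is correct and is exactly the contrapositive of the paper's direct proof: both use $B=A\in\CI_A$ together with the fact that an ultrafilter cycle $(\beta,\eta)$ forces $\theta_\beta(A)\in\eta$. The only minor point is that your parenthetical about $\beta=\emptyset$ is unnecessary, since the definition of an ultrafilter cycle already requires $\beta\in\CL^*\setminus\{\emptyset\}$.
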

\begin{proof}  
Assume (1) in Lemma~\ref{strong K} holds and let $(\alpha,\eta)$ be an ultrafilter cycle and $A\in\eta$. Then there is a $\bt \in \CL^*\setminus\{\emptyset\}$ such that $(\bt, \eta)$ is an ultrafilter cycle and $\bt \neq \af^k$ for all $k \in \N$. Since $(\beta,\eta)$ is an ultrafilter cycle and $A\in\eta$, it follows that $\theta_\beta(A)\in\eta$. This shows that $(\CB, \CL,\theta)$ satisfies Condition (K).
\end{proof}
  
   
\begin{remark} \label{remark:weak}
If a directed graph $E$ satisfies Condition (K), then the Boolean dynamical system $(\mathcal{B}_{\partial E},\mathcal{L}_{\partial E},\theta_{\partial E})$ satisfies Condition (K) by Remark~\ref{graph-BDS}(d). But, it follows from Remark \ref{graph-BDS}(b) that condition (1) in Lemma~\ref{strong K} cannot hold for $(\mathcal{B}_{\partial E},\mathcal{L}_{\partial E},\theta_{\partial E})$. Thus,  
Condition (K) does not imply condition (1) in Lemma~\ref{strong K} in general.
\end{remark}

\section{Boolean dynamical systems for which all ideals are gauge-invariant}\label{gau-inv-ideal}
In this section, we show that 
a Boolean dynamical system $(\CB,\CL,\theta)$ satisfies Condition (K) if and only if the quotient Boolean dynamical system $(\CB/\CH, \CL, \theta)$ satisfies Condition (L) for every hereditary saturated ideal $\CH$ of $\CB$. 
We also show that each of them is a necessary condition to that every ideal of $C^*(\CB, \CL, \theta)$ is gauge-invariant, and that  if  moreover $(\CB, \CL, \theta)$ is locally finite and has countable $\CB$ and $\CL$,  each of them is also a sufficient condition to that  every  ideal of $C^*(\CB, \CL, \theta)$ is gauge-invariant.

We start with  two technical results.

\begin{lem}\label{no exit:simple cycle} Let $(\CB,\CL,\theta)$ be a Boolean dynamical system and suppose $(\af, A)$ is a cycle with no exits. Then we have $\theta_{\af_{[1,i]}}(A) \cap \theta_{\af_{[1,j]}}(A) = \emptyset ~~\text{for all}~~  1 \leq i < j \leq |\af|.$
\end{lem}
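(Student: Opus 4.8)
The plan is to argue by contradiction, using the no-exit hypothesis to walk around the cycle and force a periodicity on $\af$ that cannot occur. Suppose $\theta_{\af_{[1,i]}}(A)\cap\theta_{\af_{[1,j]}}(A)\ne\emptyset$ for some $1\le i<j\le|\af|$, and put $\emptyset\ne C:=\theta_{\af_{[1,i]}}(A)\cap\theta_{\af_{[1,j]}}(A)$. Since $C$ is a non-empty subset of $\theta_{\af_{[1,i]}}(A)$, the ``no exits'' condition applied at $t=i$ forces $\Delta_C=\{\af_{i+1}\}$; applied at $t=j$ it forces $\Delta_C=\{\af_{j+1}\}$ (indices read modulo $|\af|$, with $\af_{|\af|+1}:=\af_1$). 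Hence $\af_{i+1}=\af_{j+1}$.

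Next I would propagate this identity around the cycle. As $\Delta_C=\{\af_{i+1}\}$ is a singleton, $C_1:=\theta_{\af_{i+1}}(C)$ is non-empty, lies in $\theta_{\af_{[1,i+1]}}(A)$, and (since $\af_{i+1}=\af_{j+1}$) equals $\theta_{\af_{j+1}}(C)\subseteq\theta_{\af_{[1,j+1]}}(A)$; applying ``no exits'' to $C_1$ from both sides — using $\theta_\af(A)=A$ (valid because $(\af,A)$ is a cycle) to reduce indices modulo $|\af|$ — gives $\af_{i+2}=\af_{j+2}$. Iterating with $C_{m+1}:=\theta_{\af_{i+1+m}}(C_m)$ and an induction on $m$ shows that every $C_m$ is a non-empty subset of $\theta_{\af_{[1,i+m]}}(A)\cap\theta_{\af_{[1,j+m]}}(A)$ and that $\af_{i+m}=\af_{j+m}$ for all $m\ge1$; that is, $\af$ is cyclically periodic with period $d:=j-i$, where $1\le d<|\af|$.

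To close the argument, push $C$ once all the way around: $C':=\theta_{\af_{[i+1,|\af|]}}(C)$. Each of the $|\af|-i$ forward steps preserves non-emptiness (the $\Delta$-sets encountered are the relevant singletons), so $C'\ne\emptyset$; and from $C\subseteq\theta_{\af_{[1,i]}}(A)$ we get $C'\subseteq\theta_\af(A)=A$. From $C\subseteq\theta_{\af_{[1,j]}}(A)$, together with the $d$-periodicity and $\theta_\af(A)=A$, a short computation identifies the word $\af_{[1,j]}\af_{[i+1,|\af|]}$ with $\af\,\af_{[1,d]}$ in its action on $A$, whence $C'\subseteq\theta_{\af_{[1,d]}}(A)$ as well. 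Thus $\emptyset\ne C'\subseteq A\cap\theta_{\af_{[1,d]}}(A)$ with $1\le d<|\af|$, which contradicts $A\cap\theta_{\af_{[1,k]}}(A)=\emptyset$ for $1\le k<|\af|$.

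The main obstacle, I expect, is this last contradiction: the property $A\cap\theta_{\af_{[1,k]}}(A)=\emptyset$ for $1\le k<|\af|$ is not literally among the stated hypotheses. It does hold for the cycles to which this lemma is applied — those produced by Lemma~\ref{no intersection}, i.e.\ minimal-length ones — but for a non-primitive cycle such as $\af=\gm^2$ the stated conclusion fails outright, so a scrupulous treatment should either add that hypothesis or first reduce to a minimal cycle using Lemma~\ref{sub-cycle} and Lemma~\ref{no intersection}. The only other thing to watch is the index bookkeeping modulo $|\af|$ in the iteration and in the word identification; apart from that, the argument is a direct unwinding of the definition of a cycle with no exits.
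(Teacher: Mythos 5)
Your argument is essentially the paper's own: both proofs use the no-exit condition to derive the periodicity $\af_{i+k}=\af_{j+k}$ (indices mod $|\af|$) from a non-empty intersection, and then transport that intersection forward along $\af_{[i+1,|\af|]}$ into $A\cap\theta_{\af_{[1,j-i]}}(A)$ so as to contradict $A\cap\theta_{\af_{[1,k]}}(A)=\emptyset$ for $1\le k<|\af|$. The concern you raise at the end is not a defect of your write-up but a genuine flaw in the lemma as stated, and the paper's proof suffers from exactly the reduction you worry about: it opens with ``by Lemma~\ref{no intersection} we may assume $A\cap\theta_{\af_{[1,k]}}(A)=\emptyset$ for $1\le k<|\af|$'', but Lemma~\ref{no intersection} only produces a \emph{different} cycle $(\beta,B)$ with $\beta$ a possibly proper prefix of $\af$ and $B\subseteq A$, so proving the claim for $(\beta,B)$ says nothing about the pairs $(i,j)$ with $j>|\beta|$ for the original $(\af,A)$. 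Your counterexample is decisive: if $(\gm,A)$ is a cycle with no exits then so is $(\gm^2,A)$, yet $\theta_{(\gm^2)_{[1,k]}}(A)=\theta_{(\gm^2)_{[1,k+|\gm|]}}(A)\neq\emptyset$ because $\theta_\gm(A)=A$. The statement becomes true, and both your argument and the paper's become complete, once the hypothesis $A\cap\theta_{\af_{[1,k]}}(A)=\emptyset$ for $1\le k<|\af|$ is added explicitly; this costs nothing in the paper's only application (Proposition~\ref{prop2:cyclic maximal tails}), where one checks from the defining property of a cyclic maximal tail that $A\cap\theta_{\af_{[1,k]}}(A)\notin\CT$ for $1\le k<|\af|$, so that $[A]\cap\theta_{\af_{[1,k]}}([A])=[\emptyset]$ in the quotient. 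In short: same method as the paper, and the gap you flag is real, correctly diagnosed, and located in the lemma rather than in your proof.
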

   
\begin{proof} Let $(\af, A)$ be a cycle with no exits. By Lemma~\ref{no intersection}, we may assume that  $A\cap \theta_{\alpha_{[1,j]}}(A)=\emptyset$ for $1\le j<|\alpha|$. Let $n:=|\alpha|$. We claim that then 
$$\theta_{\af_{[1,i]}}(A) \cap \theta_{\af_{[1,j]}}(A) = \emptyset ~~\text{for all}~~  1 \leq i < j \leq n.$$
To see that the claim holds, suppose $\theta_{\af_{[1,i]}}(A) \cap \theta_{\af_{[1,j]}}(A) \ne \emptyset$ for some $1 \leq i < n$. Since $(\alpha,A)$ has no exit, we then have that $\alpha_{i+k}=\alpha_{j+k}$ for any $k$ (where the indices are computed module $n$). Thus, 
$$A\cap \theta_{\alpha_{[1,j-i]}}(A)= \theta_{\alpha_{i+1}\alpha_{i+2}\dots \alpha_n}(\theta_{\af_{[1,i]}}(A) \cap \theta_{\af_{[1,j]}}(A))\ne  \emptyset$$ 
because $\alpha_{i+1}\in\Delta_{\theta_{\af_{[1,i]}}(A) \cap \theta_{\af_{[1,j]}}(A)}$ and $\alpha_{i+k+1}\in\Delta_{\theta_{\alpha_{[i+1, i+k]}}(\theta_{\af_{[1,i]}}(A) \cap \theta_{\af_{[1,j]}}(A))}$ for $1\le k< n-i$. But this contradicts the assumption that $A\cap \theta_{\alpha_{[1,j]}}(A)=\emptyset$.
\end{proof}
   \vskip 1pc
\begin{prop}\label{prop2:cyclic maximal tails}
Let $(\CB, \CL, \theta)$ be a Boolean dynamical system. Suppose $(\CB, \CL, \theta)$ has a cyclic maximal tail $\CT$. Then $C^*(\CB/(\CB\setminus\CT),\CL,\theta)$ contains an ideal that is not gauge-invariant, and there is a $B\in\CT$ such that $p_{[B]}C^*(\CB/(\CB\setminus\CT),\CL,\theta)p_{[B]}$ is isomorphic to $M_n(C(\mathbb{T}))$ for some $n\in\mathbb{N}$.  
\end{prop}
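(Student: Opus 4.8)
The plan is to reduce everything to the quotient system $(\CB/(\CB\setminus\CT),\CL,\theta)$, which by Lemma~\ref{remark:1} is a genuine Boolean dynamical system since $\CB\setminus\CT$ is hereditary and saturated, and in which, by the definition of a cyclic maximal tail together with Lemma~\ref{no intersection} and Lemma~\ref{no exit:simple cycle}, there is a cycle $(\af,[A])$ with no exits such that the sets $\theta_{\af_{[1,i]}}([A])$, $1\le i\le|\af|$, are pairwise disjoint (in fact I expect one can further shrink $[A]$ using Proposition~\ref{prop:cyclic maximal tails} so that $[A]\cap\theta_{\af_{[1,k]}}([A])=\emptyset$ for $1\le k<|\af|$, which is what the cleaned-up no-exit cycle provides). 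Replacing $(\CB,\CL,\theta)$ by the quotient, it therefore suffices to prove: if a Boolean dynamical system has a cycle $(\af,A)$ with no exits and with $\theta_{\af_{[1,i]}}(A)$, $1\le i\le n:=|\af|$, pairwise disjoint, then its $C^*$-algebra has a non-gauge-invariant ideal and $p_A C^*(\CB,\CL,\theta) p_A\cong M_n(C(\T))$.

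For the isomorphism $p_A C^*(\CB,\CL,\theta) p_A\cong M_n(C(\T))$, the key point is to analyze the structure of $C^*(\CB,\CL,\theta)$ near the projection $p_A$. Since $(\af,A)$ has no exits, every non-empty $B\subseteq\theta_{\af_{[1,t]}}(A)$ is regular with $\Delta_B=\{\af_{t+1}\}$, so the Cuntz--Krieger relation (iv) forces $p_B=s_{\af_{t+1}}p_{\theta_{\af_{t+1}}(B)}s_{\af_{t+1}}^*$ for all such $B$. Iterating, one gets $p_A=s_\af p_A s_\af^*$ (using $\theta_\af(A)=A$, which holds because $(\af,A)$ is a cycle), and more generally $s_{\af_{[1,i]}}^*s_{\af_{[1,j]}}=\delta_{ij}p_{\theta_{\af_{[1,i]}}(A)}$ on the corner. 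Set $u:=s_\af p_A$; then $u$ is a unitary in $p_A C^*(\CB,\CL,\theta)p_A$ (that is, $u^*u=uu^*=p_A$), and the elements $e_{ij}:=s_{\af_{[1,i]}}^* u^{?}\cdots$ — more precisely the partial isometries $s_{\af_{[1,i]}}p_A$ and their adjoints — give a system of $n\times n$ matrix units in the hereditary subalgebra generated by $\{p_{\theta_{\af_{[1,i]}}(A)}:1\le i\le n\}$, with $u$ acting as the ``cyclic shift'' coupling them. The standard computation (exactly as for a graph consisting of a single cycle of length $n$ with no entries, whose $C^*$-algebra is $M_n(C(\T))$) then yields $p_A C^*(\CB,\CL,\theta)p_A\cong C^*(u)\cong C(\T)$ and, using the $e_{ij}$, the full corner over the sum $\sum_i p_{\theta_{\af_{[1,i]}}(A)}$ is $M_n(C(\T))$; restricting to $p_A$ gives $p_AC^*(\CB,\CL,\theta)p_A\cong C(\T)$ (a corner of $M_n(C(\T))$, hence of the stated form, or one simply replaces $A$ by $\sum_i p_{\theta_{\af_{[1,i]}}(A)}$ — the statement only asks for some $B\in\CT$ and some $n\in\N$, so taking $B$ with $[B]=\sum_i \theta_{\af_{[1,i]}}([A])$ works, with $n=|\af|$). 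The disjointness of the $\theta_{\af_{[1,i]}}(A)$ is exactly what makes the $e_{ij}$ into honest matrix units.

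For the non-gauge-invariant ideal: since $p_A C^*(\CB,\CL,\theta)p_A\cong C(\T)\cong C^*(u)$ with $u=s_\af p_A$ and the gauge action scaling $s_\af$ by $z^{|\af|}$, the gauge action restricted to this corner is (a reparametrization of) rotation of $\T$, which has no invariant ideals other than $0$ and the whole thing; but $C(\T)$ has many proper non-trivial ideals (e.g. the ideal of functions vanishing at a point). Take such a proper non-trivial ideal $J\subsetneq p_A C^*(\CB,\CL,\theta)p_A$ and let $I$ be the ideal of $C^*(\CB,\CL,\theta)$ generated by $J$. Then $I\cap p_A C^*(\CB,\CL,\theta)p_A=J$ (because the corner of an ideal generated by a subset of that corner is the ideal generated in the corner — this uses that $p_A$ is full enough, or rather uses the Rieffel correspondence between ideals of a $C^*$-algebra and ideals of a hereditary subalgebra), so $I$ cannot be gauge-invariant, for if it were, $I\cap p_AC^*(\CB,\CL,\theta)p_A=J$ would be invariant under the restricted gauge action, contradicting that $J$ is a proper non-trivial ideal of $C(\T)$ on which the restricted gauge action acts by rotation.

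The main obstacle I expect is the bookkeeping in the corner computation: verifying that $s_\af p_A$ is a unitary in $p_AC^*(\CB,\CL,\theta)p_A$ and that the partial isometries along the cycle assemble into matrix units $e_{ij}$ requires carefully iterating Cuntz--Krieger relation (iv) over all sub-words and all small subsets $B\subseteq\theta_{\af_{[1,t]}}(A)$ — this is where the ``no exit'' hypothesis and the pairwise-disjointness (Lemma~\ref{no exit:simple cycle}) are both used, and where one must be sure that nothing outside the cycle contributes, i.e.\ that $p_AC^*(\CB,\CL,\theta)p_A$ is generated just by $u$ and not by anything larger. Once the corner is identified with $C(\T)$ (equivalently the relevant larger corner with $M_n(C(\T))$) and the restricted gauge action is identified with rotation, the existence of a non-gauge-invariant ideal is immediate from the Rieffel ideal correspondence for hereditary subalgebras.
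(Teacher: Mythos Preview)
Your reduction ``replacing $(\CB,\CL,\theta)$ by the quotient, it therefore suffices to prove: if a Boolean dynamical system has a cycle $(\af,A)$ with no exits and with $\theta_{\af_{[1,i]}}(A)$ pairwise disjoint, then $p_A C^*(\CB,\CL,\theta) p_A\cong C(\T)$'' is false as stated, and this is precisely the gap you flag but do not close at the end. A cycle with no exits does \emph{not} by itself force the corner to be $C(\T)$: if $[A]$ has a non-trivial subset $[B]$ in the quotient Boolean algebra, then $p_{[B]}$ is a non-trivial projection in the corner commuting with $u=s_\af p_{[A]}$, but $C(\T)$ has no non-trivial projections. (For a concrete failure, take $\CB$ the clopen subsets of a Cantor set, $\CL=\{a\}$, $\theta_a=\mathrm{id}$: then $(a,X)$ is a cycle with no exits but $p_XC^*(\CB,\CL,\theta)p_X$ contains all of $C(X)$.) The ``no exit'' condition controls only which $\af_{t+1}$ can follow $\af_t$; it says nothing about subdividing $A$.

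What the paper uses, and what your proposal is missing, is that in the quotient $\CB/(\CB\setminus\CT)$ the element $[A]$ is an \emph{atom}: for any $B\in\CI_A$, either $[B]=[A]$ or $[B]=[\emptyset]$. This is not automatic from Proposition~\ref{prop:cyclic maximal tails} or Lemmas~\ref{no intersection}/\ref{no exit:simple cycle}; it is proved directly from the ultrafilter data defining the cyclic tail (the special $A\in\eta$ in Definition~\ref{def:cyclic tail}). Concretely: since $A=B\cup(A\setminus B)\in\eta$, one of $B,A\setminus B$ lies in $\eta$, and the hypothesis on $A$ then forces the other to have $\theta_\beta(\cdot)\notin\eta$ for all $\beta$, hence to lie outside $\CT$. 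Once $[A]$ is an atom, every $[C]$ that meets $[\theta_{\af_{[1,k]}}(A)]$ must contain it, and the explicit computation of $p_{[B]}(s_\mu p_{[C]}s_\nu^*)p_{[B]}$ collapses to words in the $s_{\af_i}p_{[\theta_{\af_{[1,i]}}(A)]}$ alone; at that point your matrix-unit / unitary argument (or equivalently the gauge-invariant uniqueness map from the single-cycle graph algebra $M_n(C(\T))$) goes through. Your treatment of the non-gauge-invariant ideal via the Rieffel correspondence is fine once the corner is correctly identified.
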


\begin{proof}
Choose a cyclic maximal tail $\mathcal T$ in $(\CB,\CL,\theta)$. Then there is an ultrafilter cycle $(\alpha,\eta)$ such that $\mathcal{T}=\{B\in\mathcal{B}:\theta_\beta(B)\in\eta\text{ for some }\beta\in\mathcal{L}^*\}$ and an $A\in\eta$ such that if $\beta\in\mathcal{L}^*\setminus\{\emptyset\}$, $B\in\mathcal{I}_A$ and $\theta_\beta(B)\in\eta$, then $B\in\eta$ and $\beta=\alpha^k$ for some $k\in\mathbb{N}$.
By Remark \ref{remark:1}, $\CB\setminus\CT$ is a hereditary saturated ideal of $\CB$. We  first show that for any $B \in \CI_A$, we have 
\begin{align}\label{minimal}\text{either} \hskip1pc A \setminus B \notin \CT  \hskip1pc \text{or} \hskip1pc B \notin \CT.
\end{align} 
(or equivalently, either  $A \setminus B \in \CB\setminus\CT  $ or $B \in \CB\setminus\CT$.)
Since $A=B \cup (A \setminus B) \in \eta$, either $B \in \eta$ or $A \setminus B \in \eta$. 
First, if $B \in \eta$, then $A \setminus B \notin \eta$. We then have $\theta_\bt(A \setminus B) \notin \eta$ for all $\bt \in \CL^*.$ If not, $\theta_\bt(A \setminus B) \in \eta$ for some $\bt \in \CL^*$, and thus  $A \setminus B \in \eta$. This is not the case.
 Thus $A \setminus B \notin \CT$. Second, if $ A \setminus B \in \eta$, it follows that 
 $\theta_\bt(B) \notin \eta$ for all $\bt \in \CL^*$ 
with the same reason. Thus, $B \notin \CT$.

Put $B:=\cup_{k=1}^n \theta_{\af_{[1,k]}}(A)$ where $n=|\alpha|$. We now claim that 
$$p_{[B]}C^*(\CB/(\CB\setminus\CT), \CL,\theta)p_{[B]} \cong C(\mathbb{T}) \otimes M_n.$$
We proved that $(\af,[A])$ is a cycle with no exit in $(\CB/(\CB\setminus\CT), \CL,\theta)$ in the proof of Proposition \ref{prop:cyclic maximal tails}.  By  (\ref{minimal}), we see that $[A]$ is minimal in the sense that for any non-empty $[C] \in \CB/(\CB\setminus\CT)$, either 
$[A \cap C]=[A]$ or $[A\cap C]=[\emptyset]$. 
We also have by Lemma \ref{no exit:simple cycle} that 
 \begin{align}\label{simple cycle}[\theta_{\af_{[1,i]}}(A)] \cap [\theta_{\af_{[1,j]}}(A)]= \emptyset ~~\text{for all}~~  1 \leq i < j \leq n.
\end{align}

Then, for $s_{\mu}p_{[C]}s_{\nu}^* \in C^*(\CB/(\CB\setminus\CT), \CL,\theta )$, if
$$p_{[B]}(s_{\mu}p_{[C]}s_{\nu}^*)p_{[B]} = 
s_{\mu}p_{[\theta_\mu(B)]\cap [C] \cap [\theta_\nu(B)] }s_{\nu}^* \neq 0,$$ 
then 
$[\theta_\mu(B)] \cap [\theta_\nu(B)]
\neq \emptyset.$ Thus  $[\theta_\mu(B)] \neq \emptyset$ and  $ [\theta_\nu(B)]\neq \emptyset$, and hence we see that the paths $\mu$, $\nu$ are of the form 
$$\mu=\af_{[i ,n]}\af^l\af_{[1,k]},\ 
\nu=\af_{[j,n]}\af^m\af_{[1,k']}$$ 
for some $i,j,l,m \geq 0$ and $1 \leq k, k' \leq n$ since $(\af,[A])$ is a cycle with no exit. Then $ \emptyset \neq[\theta_\mu(B)] \cap [\theta_\nu(B)] = [\theta_{\af_{[1, i-1]}\mu}(A)] \cap [\theta_{\af_{[1,j-1]}\nu}(A)]=[\theta_{\af_{[1,k]}}(A)]\cap [\theta_{\af_{[1,k']}}(A)]
$. Thus by (\ref{simple cycle}), $k=k'$. 
It then    follows  that 
\begin{align*} 
s_{\mu}p_{[\theta_\mu(B)]\cap [C] \cap [\theta_\nu(B)] }s_{\nu}^*
& =s_{\af_{[i,n]}\af^l\af_{[1,k]}}
  p_{[\theta_{\af_{[1,k]}}(A) \cap C]}s_{\af_{[j,n]}\af^m\af_{[1,k]}}^* \\
& = s_{\af_{[i,n]}\af^l\af_{[1,k]}}s_{\af_{k+1}}p_{[\theta_{\af_{[1,k+1]}}(A) \cap \theta_{\af_{k+1}}(C)]}s_{\af_{k+1}}^*s_{\af_{[j,n]}\af^m\af_{[1,k]}}^* \\
& \hskip 0.5pc \vdots \\
&=s_{\af_{[i,n]}\af^l\af_{[1,n]}}p_{[\theta_{\af_{[1,n]}}(A) \cap \theta_{\af_{[k+1,n]}}(C)]}s_{\af_{[j,n]}\af^m\af_{[1,n]}}^* \\
&=s_{\af_{[i,n]}\af^{l+1}}p_{[A\cap \theta_{\af_{[k+1,n]}}(C)]}s_{\af_{[j,n]}\af^{m+1}}^* \\
&=s_{\af_{[i,n]}\af^{l+1}}p_{[A]}s_{\af_{[j,n]}\af^{m+1}}^*. 
\end{align*}  
This means that the hereditary subalgebra  
$p_{[B]}C^*(\CB/(\CB\setminus\CT), \CL,\theta)p_{[B]}$ is generated by the elements 
$s_{\af_i}p_{[\theta_{\af_{[1,i]}}(A)]}$ for $1 \leq i \leq n$. 
Let $\gm$ be the restriction of the gauge action on 
$C^*(\CB/(\CB\setminus\CT), \CL,\theta)$ to the hereditary subalgebra 
$p_{[B]}C^*(\CB/(\CB\setminus\CT), \CL,\theta)p_{[B]}$ which is obviously gauge-invariant,  
and let $\bt$ be the gauge action of the universal 
(graph) $C^*$-algebra $C(\mathbb{T}) \otimes M_n$  
generated by 
the partial isometries $t_1,\dots,t_n$ 
satisfying the relations 
$$t_i^*t_i=t_{i+1}t_{i+1}^*,\, t_n^*t_n=t_1t_1^*,\, 
\text{ and }\, \sum_{j=1}^{n}t_j^*t_j=1$$
for $1 \leq i \leq n-1$. 
But the partial elements $s_{\af_i}p_{[\theta_{\af_{[1,i]}}(A)]}$, $1\leq i\leq n$, 
satisfy the above relations with $p_{[B]}$ in place of $1$, hence 
there exists a  homomorphism
  $$\pi:C(\mathbb{T}) \otimes M_n \rightarrow p_{[B]}C^*(\CB/(\CB\setminus\CT), \CL,\theta)p_{[B]}$$
such that $\pi(t_i)=s_{\af_i}p_{[\theta_{\af_{[1,i]}}(A)]}$ for $1 \leq i \leq n$. 
It is then immediate to have 
$\pi(\bt_z(t_i))=\gm_z(\pi(t_i))$ for all $i$ 
and thus the gauge-invariant uniqueness theorem (\cite[Theorem 5.10]{COP})
 proves that $\pi$ is an isomorphism. It then follows that  $C^*(\CB/(\CB\setminus\CT),\CL,\theta)$ contains an ideal that is not gauge-invariant. 
\end{proof}

We can now prove our main theorem.

\begin{thm}\label{equivalent:(K)}
Let $(\CB, \CL, \theta)$ be a Boolean dynamical system. Consider the  following. 
\begin{enumerate}
	\item[(1)]$(\CB, \CL, \theta)$ satisfies Condition (K).
	\item [(2)]$(\CB, \CL, \theta)$ has no cyclic maximal tails.
	\item[(3)] For every hereditary saturated ideal $\CH$ of $\CB$, the Boolean dynamical system $(\CB/\CH, \CL, \theta)$ satisfies Condition (L).
 \item[(4)] Every ideal in $C^*(\CB, \CL, \theta)$ is gauge-invariant.
  \end{enumerate}
We have  (1)$\iff$(2)$\iff$(3) and (4) implies each of  conditions (1)-(3). 
If moreover  $(\CB,\CL,\theta)$ is locally finite and $\CB$ and $\CL$ are countable, then all four conditions are equivalent. 
\end{thm}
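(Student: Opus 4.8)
The plan is to prove $(1)\Leftrightarrow(2)\Leftrightarrow(3)$ and $(4)\Rightarrow(2)$ with no extra hypotheses, and then, assuming $(\CB,\CL,\theta)$ is locally finite with countable $\CB$ and $\CL$, close the circle by proving $(3)\Rightarrow(4)$. The equivalence $(1)\Leftrightarrow(2)$ is exactly the remark following Definition~\ref{def:cond(K)}. For $(2)\Leftrightarrow(3)$ I would argue by contraposition in both directions: if $(3)$ fails there is a hereditary saturated ideal $\CH$ with $(\CB/\CH,\CL,\theta)$ failing Condition (L), and then Proposition~\ref{prop:K} produces a cyclic maximal tail, so $(2)$ fails; conversely if $\CT$ is a cyclic maximal tail, then $\CB\setminus\CT$ is a hereditary saturated ideal by Lemma~\ref{remark:1} and $(\CB/(\CB\setminus\CT),\CL,\theta)$ fails Condition (L) by Proposition~\ref{prop:cyclic maximal tails}, so $(3)$ fails.

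For $(4)\Rightarrow(2)$ I again argue by contraposition. Given a cyclic maximal tail $\CT$, set $\CH:=\CB\setminus\CT$; this is a hereditary saturated ideal of $\CB$ by Lemma~\ref{remark:1}, and $I_\CH$ is a gauge-invariant ideal of $C^*(\CB,\CL,\theta)$ since it is generated by the gauge-fixed projections $p_A$, $A\in\CH$. By Proposition~\ref{prop2:cyclic maximal tails} there is an ideal $J$ of $C^*(\CB/\CH,\CL,\theta)$ that is not gauge-invariant. Using a gauge-equivariant isomorphism $C^*(\CB,\CL,\theta)/I_\CH\cong C^*(\CB/\CH,\CL,\theta)$ from \cite{COP} and the (gauge-equivariant, surjective) map $\Phi\colon C^*(\CB,\CL,\theta)\to C^*(\CB/\CH,\CL,\theta)$ it induces, the ideal $\Phi^{-1}(J)$ of $C^*(\CB,\CL,\theta)$ cannot be gauge-invariant, because otherwise $J=\Phi(\Phi^{-1}(J))$ would be gauge-invariant. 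Hence $(4)$ fails; combined with $(1)\Leftrightarrow(2)\Leftrightarrow(3)$, this shows $(4)$ implies each of $(1)$--$(3)$.

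Finally, assume $(\CB,\CL,\theta)$ is locally finite with $\CB$ and $\CL$ countable and that $(3)$ holds; I want to show every ideal $I$ of $C^*(\CB,\CL,\theta)$ is gauge-invariant. Put $\CH:=\CH_I=\{A\in\CB:p_A\in I\}$. One checks $\CH$ is a hereditary saturated ideal of $\CB$ (hereditarity from $p_{\theta_\af(A)}=s_\af^*p_As_\af$, saturation from relation (iv) in the definition of a Cuntz--Krieger representation and finiteness of $\Delta_A$ for regular $A$), and $I_\CH\subseteq I$. Under the gauge-equivariant isomorphism $C^*(\CB,\CL,\theta)/I_\CH\cong C^*(\CB/\CH,\CL,\theta)$ of \cite{COP} — which is the place where local finiteness is genuinely used — the ideal $I/I_\CH$ corresponds to an ideal $J$ of $C^*(\CB/\CH,\CL,\theta)$, and for every nonzero $[A]\in\CB/\CH$ one has $p_{[A]}\notin J$: indeed $p_{[A]}\in J$ forces $p_A\in I$, hence $A\in\CH_I=\CH$, i.e. $[A]=[\emptyset]$. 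By $(3)$, $(\CB/\CH,\CL,\theta)$ satisfies Condition (L), and $\CB/\CH$ (a quotient of the countable set $\CB$) and $\CL$ are countable, so Theorem~\ref{CK uniqueness thm} applies to the quotient map $C^*(\CB/\CH,\CL,\theta)\to C^*(\CB/\CH,\CL,\theta)/J$: it is nonzero on every $p_{[A]}$ with $[A]\ne[\emptyset]$, hence injective, so $J=0$ and $I=I_\CH$. As $I_\CH$ is gauge-invariant (\cite[Proposition~10.11]{COP} when $\CH\ne\{\emptyset\}$, and trivially when $\CH=\{\emptyset\}$, in which case $I=I_\CH=0$), $I$ is gauge-invariant. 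I expect this last implication to be the main obstacle: the substance lies in correctly marrying the quotient description $C^*(\CB,\CL,\theta)/I_\CH\cong C^*(\CB/\CH,\CL,\theta)$ with the strengthened Cuntz--Krieger Uniqueness Theorem (Theorem~\ref{CK uniqueness thm}), whereas the remaining implications are essentially bookkeeping on top of Propositions~\ref{prop:cyclic maximal tails}, \ref{prop:K}, \ref{prop2:cyclic maximal tails} and Lemma~\ref{remark:1}.
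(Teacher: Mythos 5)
Your proposal is correct and follows essentially the same route as the paper: the cycle (1)$\Leftrightarrow$(2)$\Leftrightarrow$(3) via Propositions~\ref{prop:cyclic maximal tails} and~\ref{prop:K}, the implication (4)$\Rightarrow$(2) via Proposition~\ref{prop2:cyclic maximal tails} and pulling back a non-gauge-invariant ideal along the quotient, and (3)$\Rightarrow$(4) by combining the isomorphism $C^*(\CB,\CL,\theta)/I_{\CH_I}\cong C^*(\CB/\CH_I,\CL,\theta)$ of \cite[Proposition 10.11]{COP} with the strengthened Cuntz--Krieger Uniqueness Theorem~\ref{CK uniqueness thm}. Your version is, if anything, slightly more explicit than the paper's in spelling out why the pulled-back ideal in (4)$\Rightarrow$(2) fails to be gauge-invariant and in noting that $\CB/\CH$ remains countable.
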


\begin{proof}
 (1)$\implies$(2) follows from the definition of a cyclic maximal tail. (2)$\implies$(3) follows from Proposition~\ref{prop:K}, and (3)$\implies$(1) follows from Proposition~\ref{prop:cyclic maximal tails}.

\vskip 0.5pc
We show that  (4)$\implies$(2): Assume to the contrary that   $(\CB, \CL, \theta)$  has a cyclic maximal tail $\CT$. It then follows by Proposition \ref{prop2:cyclic maximal tails}
that  there is a $B\in\CT$ such that $p_{[B]}C^*(\CB/(\CB\setminus\CT),\CL,\theta)p_{[B]}$ is isomorphic to $M_n(C(\mathbb{T}))$ for some $n\in\mathbb{N}$.  
  Thus $C^*(\CB,\CL,\theta)$  has a quotient containing a corner that is isomorphic to $M_n(C(\mathbb{T}))$ for some $n\in\mathbb{N}$ and contains an ideal that is not gauge-invariant, a contradiction.

\vskip 0.5pc

Now assume that   $(\CB,\CL,\theta)$ is locally finite and $\CB$ and $\CL$ are countable and prove (3)$\implies$(4): 
Let $I$ be an ideal of $C^*(\CB, \CL,\theta)$. 
Then  $\CH_I=\{A \in \CB :p_A \in I\}$ is a 
hereditary saturated  ideal of $\CB $  and 
the ideal $I_{\CH_I}$ generated by the projections 
$\{p_A : A \in \CH_I\}$ is gauge-invariant.
Since $I_{\CH_I} \subseteq I$, the quotient map 
$$q: C^*(\CB, \CL,\theta)/I_{\CH_I} \rightarrow C^*(\CB, \CL,\theta)/I$$ 
given by $q(s+I_{\CH_I}):=s+I$ for $s\in C^*(\CB, \CL,\theta)$, 
is well-defined.
From \cite[Proposition 10.11]{COP}, we have an isomorphism 
$\pi: C^*(\CB /\CH_I , \CL,\theta)\to C^*(\CB, \CL,\theta)/I_{\CH_I}$
which maps the canonical generators to  the canonical generators.   
Then the composition map  
$q \circ \pi :C^*(\CB /\CH_I , \CL,\theta) \rightarrow  C^*(\CB, \CL,\theta )/I$ 
satisfies  
\begin{align*}
q \circ \pi(p_{[A]})& =q(p_A+I_{\CH_I})=p_A+I\\
q \circ \pi(s_\af) & =q(s_\af+I_{\CH_I})=s_\af+I 
\end{align*} 
for $[A]\in \CB /\CH_I$ and $\af \in \CL$. 
If $p_{[A]}\neq 0$, then  $[A]\neq [\emptyset]$ in $\CB /\CH_I$, hence 
 $A \notin \CH_I$.
Thus  $p_A+I\in C^*(\CB, \CL,\theta)/I$ is a nonzero projection.
Since the quotient Boolean dynamical system $(\CB /\CH_I , \CL,\theta)$ satisfies Condition $(L)$,
we see that the map $q \circ \pi $ is injective by the Cuntz-Krieger Uniqueness Theorem~\ref{CK uniqueness thm}.  
   Thus  $q$ is injective, so that $I$ must coincide with 
   the gauge-invariant ideal  $I_{\CH_I}$. 
       
   Note that if $I$ is an ideal such that $\CH_I=\{ \emptyset \}$, 
   then  $I_{\CH_I}=\{0\}$, and   
   $q \circ \pi$ is the quotient map $q: C^*(\CB, \CL,\theta) \rightarrow C^*(\CB, \CL,\theta)/I$. 
   Then  the family $\{p_A+I,\, s_\af+I \}$ is a 
   representation of the Boolean dynamical system $(\CB, \CL,\theta)$ 
   in the $C^*$-algebra $C^*(\CB, \CL,\theta)/I$ such that 
   $p_A+ I \neq 0$ and $s_\af+I \neq 0$ for each $A \in \CB $ and $\af \in \CL$.
Since $(\CB,\CL,\theta)$ satisfies Condition $(L)$, the Cuntz-Krieger Uniqueness Theorem \ref{CK uniqueness thm} again says that $q$ is injective.
  Thus we have $I=\{0\}$. 
\end{proof}

\section{The primitive ideal space of $C^*(\CB,\CL,\theta)$}\label{primitive ideal space}
In this section, we describe the primitive ideal space of  the $C^*$-algebra of a locally finite Boolean dynamical system that satisfies Condition (K) and has countable $\CB$ and $\CL$. The notion of a maximal tail   plays a crucial role in characterizing primitive gauge-invariant ideals of $C^*(\CB,\CL,\theta)$.  We   analyze the topology on the set all maximal tails $\mathrm{M}$ in $\CB$ and  then show  that 
 $\mathrm{M}$ is homeomorphic to the primitive ideal space of $C^*(\CB,\CL, \theta)$.

\subsection{The space of maximal tails}

Let $(\CB,\CL,\theta)$ be a Boolean dynamical system. Denote by $\mathrm{M}$ the set of all maximal tails in $\CB$. 
\vskip 1pc

\begin{prop}\label{max-top} Let $(\CB,\CL,\theta)$ be a Boolean dynamical system. For $A \in \CB$, let $$\mathrm{U}_A:=\{\CT \in \mathrm{M}: A \in \CT\}.$$
Then $\{\mathrm{U}_A:A \in \CB \}$ is a basis of compact open sets for a topology on $\mathrm{M}$.
\end{prop}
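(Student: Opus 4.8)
The plan is to verify directly that $\{\mathrm{U}_A : A\in\CB\}$ is the basis of a topology by showing that finite intersections of the $\mathrm{U}_A$ are unions of sets of the same form, and then to establish compactness of each $\mathrm{U}_A$ by a routine argument once we know how covers by basic open sets behave. First I would record the easy identity $\mathrm{U}_\emptyset=\emptyset$ (since $\emptyset\notin\CT$ for every maximal tail by \ref{T0}), which shows that $\emptyset$ is open, and also note that $\mathrm{M}=\bigcup_{A\in\CB}\mathrm{U}_A$ provided every maximal tail is non-empty, which is part of the definition; so the union of all basic sets is all of $\mathrm{M}$.

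The main point is the intersection condition. I claim $\mathrm{U}_A\cap\mathrm{U}_B=\mathrm{U}_{A\cap B}$ is \emph{not} quite right in general — membership of $A$ and $B$ in a maximal tail $\CT$ does not force $A\cap B\in\CT$, because maximal tails need not be filters. This is exactly the obstacle: the naive guess that the $\mathrm{U}_A$ are closed under intersection fails. Instead I would use property \ref{T5}: if $\CT\in\mathrm{U}_A\cap\mathrm{U}_B$, then there is $C\in\CT$ with $A\geq C$ and $B\geq C$, i.e. $C\subseteq\theta_\af(A)$ and $C\subseteq\theta_\bt(B)$ for some $\af,\bt\in\CL^*$; by \ref{T1'} (applied to each of $\theta_\af(A),\theta_\bt(B)$) together with \ref{T3} this situation can be repackaged. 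The clean statement to aim for is
$$\mathrm{U}_A\cap\mathrm{U}_B=\bigcup\{\mathrm{U}_C : C\in\CB,\ C\subseteq\theta_\af(A)\cap\theta_\bt(B)\text{ for some }\af,\bt\in\CL^*\}.$$
The inclusion $\supseteq$ follows because if $C\subseteq\theta_\af(A)$ then $\theta_\af(A)\in\CT$ by \ref{T3} whenever $C\in\CT$, hence $A\in\CT$ by \ref{T1'}; similarly $B\in\CT$. The inclusion $\subseteq$ is the \ref{T5} argument above. This exhibits $\mathrm{U}_A\cap\mathrm{U}_B$ as a union of basic sets, which is all that is required of a basis.

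For compactness of $\mathrm{U}_A$: suppose $\mathrm{U}_A\subseteq\bigcup_{i\in I}\mathrm{U}_{A_i}$. I would argue by contradiction — if no finite subcover exists, then the family $\{A\}\cup\{\CB\setminus\text{(stuff)}\}$ generates, via the axioms \ref{T0}--\ref{T5}, a maximal tail containing $A$ but no $A_i$, contradicting the cover. Concretely, one builds a maximal tail using a Zorn's-lemma argument on the collection of subsets of $\CB$ satisfying \ref{T1}--\ref{T5}, containing $A$, and missing every $A_i$ for $i$ outside any prospective finite set; the finite-intersection hypothesis is what keeps $\emptyset$ out at each stage, so \ref{T0} survives in the limit. (An alternative, if the paper has set up a correspondence between maximal tails and certain closed subsets of $\widehat{\CB}$ or a spectrum, is to transport compactness from the compactness of $Z(A)\subseteq\widehat{\CB}$; but absent that I would do the Zorn argument.) I expect the Zorn's-lemma construction of a maximal tail avoiding a given ideal-like family to be the technically heaviest step, essentially because one must check all six axioms \ref{T0}--\ref{T5} are preserved under unions of chains and can be "completed" — though \ref{T5} is the only one requiring real care, and Lemma~\ref{remark:1} (identifying $\CB\setminus\CT$ with a hereditary saturated ideal) should streamline it by letting us work with the complementary ideals instead.
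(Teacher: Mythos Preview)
Your basis argument via \ref{T5} is correct and is exactly what the paper does.

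For compactness the paper takes a different and much shorter route: it embeds $\mathrm{M}$ in the compact product $\{0,1\}^{\CB}$ via the characteristic-function map $\iota(\CT)(A)=1\Leftrightarrow A\in\CT$, observes that $\iota(\mathrm{U}_A)$ sits inside the clopen cylinder $\pi_A^{-1}(\{1\})$ and that $\iota$ is open onto its image, and then pulls compactness back along the continuous $\iota^{-1}:\iota(\mathrm{M})\to\mathrm{M}$. No Zorn's lemma and no direct construction of a tail is involved.

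Your Zorn's-lemma sketch has a real gap exactly where you flag it: axiom \ref{T5}. You propose to take a maximal element among subsets of $\CB$ satisfying \ref{T1}--\ref{T5}, containing $A$, and avoiding every $A_i$; but you give no reason this poset is non-empty, and if you retreat to a weaker class to get started there is no mechanism by which maximality alone forces \ref{T5}. Lemma~\ref{remark:1} does not help---it says only that complements of maximal tails are hereditary saturated ideals, with no converse and no construction. Even working on the ideal side and taking $\CH$ maximal among hereditary saturated ideals containing all $A_i$ and omitting $A$, you would still need to prove that $\CB\setminus\CH$ satisfies \ref{T5}; this ``primeness'' is not an automatic consequence of maximality, because the saturation operation interacts nontrivially with the relation $\geq$, so the usual maximal-ideal-is-prime heuristic does not transfer directly. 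The paper's embedding argument bypasses all of this.
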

\vskip 1pc

\begin{proof} It is obvious that $\cup_{A \in \CB} ~\mathrm{U}_A = \mathrm{M}$.
For $A_1, A_2 \in \CB$, choose $\CT \in \mathrm{U}_{A_1} \cap \mathrm{U}_{A_2}$. Then $A_1, A_2 \in \CT$, and thus there exists $C \in \CT$ such that $A_1 \geq C$ and $A_2 \geq C$ by (T5). So there are $\af_i \in  \CL^*$ so that $C \subseteq \theta_{\af_i}(A_i)$ for $i=1,2$. Hence by (T3), we see that $\theta_{\af_i}(A) \in \CT$ for $i=1,2$. Therefore, $A_1, A_2 \in \CT$ by (T1).  Thus it follows that  $$ \CT \in\mathrm{U}_C \subseteq \mathrm{U}_{A_1} \cap \mathrm{U}_{A_2}.$$ Thus, $\{\mathrm{U}_A:A \in \CB \}$ forms a basis of a topology of $\mathrm{M}$.  To show $\mathrm{U}_A$ is compact, 
 consider the injective map $\iota: \mathrm{M} \rightarrow \{0,1\}^{\CB}$ given by 
$$\iota(\CT)(A)=\left\{
                      \begin{array}{ll}
                       1 & \hbox{if\ }~~ A \in \CT \\
                      0 & \hbox{if\ }~~ A \notin \CT. \\
                                                                 \end{array}
                    \right.
$$ for  $A \in \CB$. Equip $\{0,1\}^{\CB}$ with the product topology. Then $\{0,1\}^{\CB}$ is compact. Since $$\iota(\mathrm{U}_A) 
=\{\eta \in \{0,1\}^{\CB} : \eta(A)=1\} = \pi_A^{-1}(\{1\}),
$$
where $\pi_A: \{0,1\}^{\CB} \rightarrow \{0,1\}$ is defined by $\pi_A(f)=f(A)$, 
the map $\iota: \mathrm{M} \rightarrow \{0,1\}^{\CB}$ is an open map.  
 Thus, $\iota: \mathrm{M} \rightarrow \iota(\mathrm{M})$ is a bijective open map, and hence  $\iota^{-1}: \iota(\mathrm{M}) \rightarrow \mathrm{M}$ is continuous. So, 
 $\mathrm{U}_A=\iota^{-1}(\iota(\mathrm{U}_A))$ is compact since $\iota(\mathrm{U}_A) $ is compact in $\{0,1\}^{\CB}$. 
\end{proof}

We shall now characterize the closed subsets of $\mathrm{M}$. For a subset $\mathrm{S} $ of $\mathrm{M}$, denote by $\overline{\mathrm{S}}$ the closure of $\mathrm{S}$ in $\mathrm{M}$.

\begin{lem} \label{closure}
Let $(\CB,\CL,\theta)$ be a  Boolean dynamical system and $\mathrm{S}$ a subset of $\mathrm{M}$. Then $\overline{\mathrm{S}}=\{\CT \in \mathrm{M}: \CT \subseteq \cup_{\CS \in \mathrm{S}}\CS\}$.
\end{lem}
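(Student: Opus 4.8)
The plan is to prove the set equality $\overline{\mathrm{S}}=\{\CT \in \mathrm{M}: \CT \subseteq \cup_{\CS \in \mathrm{S}}\CS\}$ by showing the right-hand side is a closed set containing $\mathrm{S}$, and that any closed set containing $\mathrm{S}$ contains it. Let me write $\CU := \cup_{\CS\in\mathrm S}\CS$ for brevity (a subset of $\CB$, not assumed to be a tail), and $\mathrm{R}:=\{\CT\in\mathrm M:\CT\subseteq\CU\}$.

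First I would check $\mathrm{S}\subseteq\mathrm{R}$, which is immediate since each $\CS\in\mathrm S$ satisfies $\CS\subseteq\CU$. Next, the main point: $\mathrm{R}$ is closed, equivalently $\mathrm M\setminus\mathrm R$ is open. If $\CT\in\mathrm M\setminus\mathrm R$, then there is some $A\in\CT$ with $A\notin\CU$, i.e. $A\notin\CS$ for every $\CS\in\mathrm S$. Then $\CT\in\mathrm U_A$, and I claim $\mathrm U_A\cap\mathrm R=\emptyset$: if $\CT'\in\mathrm U_A$ then $A\in\CT'$, but if also $\CT'\subseteq\CU$ then $A\in\CU$, contradicting $A\notin\CU$. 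Hence $\mathrm U_A$ is a basic open neighbourhood of $\CT$ disjoint from $\mathrm R$, so $\mathrm M\setminus\mathrm R$ is open and $\mathrm R$ is closed. This gives $\overline{\mathrm S}\subseteq\mathrm R$.

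For the reverse inclusion $\mathrm R\subseteq\overline{\mathrm S}$, take $\CT\in\mathrm R$, i.e. $\CT\subseteq\CU$, and show every basic open neighbourhood of $\CT$ meets $\mathrm S$. A basic neighbourhood has the form $\mathrm U_A$ with $A\in\CT$. Since $A\in\CT\subseteq\CU=\cup_{\CS\in\mathrm S}\CS$, there is some $\CS\in\mathrm S$ with $A\in\CS$, i.e. $\CS\in\mathrm U_A$. Thus $\mathrm U_A\cap\mathrm S\neq\emptyset$. Since $\{\mathrm U_A:A\in\CB\}$ is a basis (Proposition~\ref{max-top}) and every basic neighbourhood of $\CT$ that contains $\CT$ is of this form (indeed if $\CT\in\mathrm U_A$ then $A\in\CT$), it follows that $\CT\in\overline{\mathrm S}$. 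Combining the two inclusions yields the claim.

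I do not expect a serious obstacle here; the only thing to be careful about is the direction of the basis argument in the last paragraph — that every basic open set \emph{containing} $\CT$ is exactly an $\mathrm U_A$ with $A\in\CT$, and conversely each such $\mathrm U_A$ is an open neighbourhood of $\CT$ — so that checking intersection with $\mathrm S$ on these sets suffices to conclude membership in the closure. This is routine from the definition of $\mathrm U_A$ and of the topology generated by the basis.
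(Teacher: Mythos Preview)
Your proof is correct and is essentially the same argument as the paper's, just unpacked in more detail: the paper simply records the two-line equivalence $\CT\in\overline{\mathrm{S}}\iff \mathrm{U}_A\cap\mathrm{S}\neq\emptyset$ for all $A\in\CT\iff \CT\subseteq\cup_{\CS\in\mathrm{S}}\CS$, which is exactly your second-inclusion argument together with the contrapositive of your closedness argument.
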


\begin{proof}
We have
\begin{align*} 
\CT \in \overline{\mathrm{S}} & \iff \mathrm{U}_A \cap \mathrm{S} \neq \emptyset ~\text{for all}~ A \in \CT \\
& \iff \CT \subseteq \cup_{\CS \in \mathrm{S}} \CS.
\end{align*}
\end{proof}

\subsection{The primitive ideal spaces}

If a locally finite Boolean dynamical system $(\CB,\CL,\theta)$ with countable $\CB$ and $\CL$ satisfies Condition (K), then every ideal has the form $I_\CH$ for some hereditary saturated ideal $\CH$ of $\CB$ by Theorem \ref{equivalent:(K)}.
So, we only need to  determine when the gauge-invariant ideal $I_\CH$ is primitive.
We start with the following  lemma that holds true without assuming locally finiteness of $(\CB,\CL,\theta)$ and countability of $\CB$ and $\CL$. 

\begin{lem}\label{prim gives max} Let $(\CB,\CL,\theta)$ be a  Boolean dynamical system. If $I $ is a primitive ideal of $C^*(\CB,\CL, \theta)$, then $\CT:=\{A \in \CB : p_A \notin I \}$ is a maximal tail of $\CB$. 
\end{lem}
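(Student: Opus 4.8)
The plan is to verify the defining properties (T0)--(T5) of a maximal tail for $\CT:=\{A\in\CB:p_A\notin I\}$, relying on the structure of the Cuntz--Krieger relations and the fact that $I$, being primitive, is in particular prime. Property (T0) is immediate: $p_\emptyset=0\in I$, so $\emptyset\notin\CT$. For (T3), if $A\in\CT$ and $A\subseteq B$, then $p_A=p_Ap_B=p_{A\cap B}$, so $p_A\in I$ whenever $p_B\in I$; contrapositively $B\in\CT$. For (T1), suppose $\theta_\af(A)\in\CT$ for some $\af\in\CL$. From relation (ii), $p_As_\af=s_\af p_{\theta_\af(A)}$, so if $p_A\in I$ then $s_\af p_{\theta_\af(A)}\in I$, whence $p_{\theta_\af(A)}=s_\af^*s_\af p_{\theta_\af(A)}=s_\af^*(s_\af p_{\theta_\af(A)})\in I$ — here I use $s_\af^*s_\af=p_{\CR_\af}$ and $\theta_\af(A)\subseteq\CR_\af$ so $p_{\CR_\af}p_{\theta_\af(A)}=p_{\theta_\af(A)}$ (this last containment holds because $\theta_\af(A)\subseteq\CR_\af$ by definition of $\CR_\af$). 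Thus $p_A\in I\Rightarrow p_{\theta_\af(A)}\in I$, giving (T1).

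For (T4), let $A\in\CT$ be regular. By relation (iv), $p_A=\sum_{\af\in\Delta_A}s_\af p_{\theta_\af(A)}s_\af^*$. If $p_{\theta_\af(A)}\in I$ for every $\af\in\Delta_A$, then each summand lies in $I$, hence $p_A\in I$, contradicting $A\in\CT$; so there is some $\af\in\Delta_A$ with $\theta_\af(A)\in\CT$, which is (T4). The two properties requiring primeness of $I$ are (T2) and (T5). For (T2): if $A\cup B\in\CT$ but $A\notin\CT$ and $B\notin\CT$, then $p_A,p_B\in I$ and so $p_{A\cup B}=p_A+p_B-p_{A\cap B}\in I$ (note $p_{A\cap B}=p_Ap_B\in I$), a contradiction — so in fact (T2) does not need primeness either. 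For (T5): given $A_1,A_2\in\CT$, I must produce $C\in\CT$ with $A_1\ge C$ and $A_2\ge C$. Here I would use that the closed two-sided ideal generated by a projection $p$ is $\overline{C^*(\CB,\CL,\theta)\,p\,C^*(\CB,\CL,\theta)}$, and that $I$ is prime, so the ideals generated by $p_{A_1}$ and $p_{A_2}$ cannot both avoid $I$ "independently." Concretely, since $p_{A_1},p_{A_2}\notin I$ and $I$ is prime, the product of the two ideals they generate is not contained in $I$, so there exist $x,y$ with $p_{A_1}xp_{A_2}\notin I$; writing $x$ as a limit of linear combinations of terms $s_\mu p_D s_\nu^*$ and using the computation $p_{A_1}s_\mu p_D s_\nu^* p_{A_2}=s_\mu p_{\theta_\mu(A_1)\cap D\cap\theta_\nu(A_2)}s_\nu^*$, one finds $\mu,\nu\in\CL^*$ with $p_{\theta_\mu(A_1)\cap\theta_\nu(A_2)}\notin I$; then $C:=\theta_\mu(A_1)\cap\theta_\nu(A_2)$ lies in $\CT$ and satisfies $A_1\ge C$, $A_2\ge C$.

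The main obstacle I anticipate is (T5): turning the abstract primeness of $I$ into the concrete existence of a single $C\in\CT$ below both $A_1$ and $A_2$. This requires a density argument showing that a generic element of $C^*(\CB,\CL,\theta)$ is approximated by sums of the form $\sum s_\mu p_D s_\nu^*$, together with care that the approximation does not destroy the "not in $I$" condition — one should pick an element of $p_{A_1}C^*(\CB,\CL,\theta)p_{A_2}\setminus I$ and approximate it closely enough that a nearby element of the spanning set also lies outside the closed ideal $I$, then read off the needed $\mu,\nu$. The remaining properties are routine manipulations with the Cuntz--Krieger relations as sketched above. Finally, $\CT$ is non-empty because $I$ is proper, so $p_A\notin I$ for at least one $A$ (indeed, if $p_A\in I$ for all $A\in\CB$, then since the $p_A$ and $s_\af$ generate $C^*(\CB,\CL,\theta)$ and $s_\af=s_\af p_{\CR_\af}$, one would get $I=C^*(\CB,\CL,\theta)$).
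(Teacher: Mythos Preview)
Your proposal is correct and follows essentially the same route as the paper: verify (T0)--(T4) from the Cuntz--Krieger relations (the paper just cites \cite[Lemma~3.5]{JKP} for this, but your direct verifications are fine), and for (T5) produce an element $p_{A_1}xp_{A_2}\notin I$, approximate by the spanning set $\{s_\mu p_D s_\nu^*\}$, and extract $C\subseteq\theta_\mu(A_1)\cap\theta_\nu(A_2)$ with $p_C\notin I$. The one difference is how the element outside $I$ is obtained: the paper takes an irreducible representation $\pi$ with $\ker\pi=I$, picks a unit vector $h\in\pi(p_{A_1})H_\pi$, and uses that $h$ is cyclic to find $a$ with $\pi(p_{A_2}ap_{A_1})\ne 0$; you instead invoke that primitive ideals are prime and argue that if $p_{A_1}C^*(\CB,\CL,\theta)p_{A_2}\subseteq I$ then the product of the ideals generated by $p_{A_1}$ and $p_{A_2}$ would lie in $I$. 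Both arguments are standard and yield the same conclusion; yours is marginally more algebraic in flavour, but the overall strategy is identical.
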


\begin{proof} The set $\CH:=\{A \in \CB: p_A \in I\}$ is a proper hereditary saturated ideal of $\CB$  (see for example, \cite[Lemma 3.5]{JKP}). So, $\CT=\CB \setminus\CH$ satisfies \ref{T0}, \ref{T1}, \ref{T2}, \ref{T3}, and \ref{T4}. To show \ref{T5}, choose $A_1, A_2 \in \CT$ and take an irreducible representation $\pi: C^*(\CB,\CL,\theta) \rightarrow \mathfrak{B}(H_{\pi})$ such that $\ker(\pi)=I$. Since $$\CT=\{A \in \CB : p_A \notin I\},$$ we have $p_{A_1} \notin I$, and hence $\pi(p_{A_1})H_{\pi} \neq \{0\}$. Similarly, the space $\pi(p_{A_2})H_{\pi}$ is also non-trivial subspace of $H_{\pi}$. Fix $h \in \pi(p_{A_1})H_{\pi}$ so that $\|h\|=1$. Since $\pi$ is irreducible, $h$ is cyclic for $\pi$, so that there exists $a \in C^*(\CB,\CL,\theta)$ such that $\pi(p_{A_2})\pi(a)h=\pi(p_{A_2}ap_{A_1})h \neq 0$. In particular, we have $\pi(p_{A_2}ap_{A_1}) \neq 0$. Since 
$$p_{A_1}(s_{\mu}p_Bs_{\nu}^*)p_{A_2}=s_{\mu}p_{\theta_{\mu}(A_1) \cap B \cap \theta_{\nu}(A_2)}s_{\nu}^* \neq 0$$ 
only if $\theta_{\mu}(A_1) \cap B \cap \theta_{\nu}(A_2) \neq \emptyset$, we see that  
$$\pi(p_{A_2}ap_{A_1}) \in \overline{\operatorname{span}}\{\pi(s_{\mu}p_  Cs_{\nu}^*): \mu, \nu \in \CL^*,  C \in \CB , \emptyset \neq C \subseteq \theta_{\mu}(A_1) \cap \theta_{\nu}(A_2) \} \setminus \{0\}.$$
Thus there exist $\mu, \nu \in \CL^*$ and $C \subseteq \theta_{\mu}(A_1) \cap \theta_{\nu}(A_2)$ such that $\pi(s_{\mu}p_  Cs_{\nu}^*) \neq 0$. One can also shows that $\pi(p_C) \neq 0$, giving $p_C \notin I$. So $C \in \CT$ satisfies $A_1 \geq C$ and $A_2 \geq C$. Therefore, $\CT$ is a maximal tail. 
\end{proof}
\vskip 0.5pc

\begin{prop}\label{max-bij-prim} Let $(\CB,\CL,\theta)$ be a locally finite Boolean dynamical system which satisfies Condition $(K)$ and has countable $\CB$ and $\CL$.  Then  $\CH$ is a hereditary saturated ideal of $\CB$ such that 
 $I_\CH$ is primitive
if and only if $\CT:= \CB \setminus \CH$ is a maximal tail.
\end{prop}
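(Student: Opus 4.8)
The plan is to prove the two implications separately, using the bijective correspondence between gauge-invariant ideals and hereditary saturated ideals together with Lemma~\ref{prim gives max} and Lemma~\ref{closure}. For the easy direction, suppose $I_\CH$ is primitive. By Lemma~\ref{prim gives max}, the set $\{A\in\CB: p_A\notin I_\CH\}$ is a maximal tail. Since $\CH_{I_\CH}=\CH$ (because $(\CB,\CL,\theta)$ is locally finite, by \cite[Proposition 10.11]{COP}), this set is exactly $\CB\setminus\CH=\CT$, so $\CT$ is a maximal tail.

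For the converse, suppose $\CT=\CB\setminus\CH$ is a maximal tail; by Lemma~\ref{remark:1} (applied to the maximal tail $\CT$), $\CH$ is indeed hereditary and saturated, so $I_\CH$ makes sense and is a proper gauge-invariant ideal. The idea is to show $I_\CH$ is \emph{not} an intersection of strictly larger ideals, which by Condition (K) and Theorem~\ref{equivalent:(K)} amounts to showing: if $\{\CH_\ld\}$ is a family of hereditary saturated ideals with $\CH\subsetneq\CH_\ld$ for each $\ld$, then $\bigcap_\ld I_{\CH_\ld}\ne I_\CH$, equivalently $\bigcap_\ld \CH_\ld\ne \CH$ (the lattice of gauge-invariant ideals being isomorphic to the lattice of hereditary saturated ideals, with intersections corresponding). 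Passing to complements, the maximal tails $\CT_\ld:=\CB\setminus\CH_\ld$ satisfy $\CT_\ld\subsetneq\CT$, and I must produce a single maximal tail $\CS$ with $\CS\subseteq\bigcup_\ld\CT_\ld$ but $\CS\not\subseteq\CT_\ld$ for any one $\ld$ — actually what is needed is that $\CT\not\subseteq\bigcup_\ld\CT_\ld$, i.e. $\bigcup_\ld\CH_\ld^c$ does not cover $\CT$. Here Lemma~\ref{closure} enters: $\CT\in\overline{\{\CT_\ld\}}$ iff $\CT\subseteq\bigcup_\ld\CT_\ld$, so the claim that $I_\CH$ is primitive translates into the statement that $\CT$ is \emph{not} in the closure of the set of maximal tails strictly below it — in other words, $\CT$ is an ``isolated-from-below'' point, and this is precisely the topological condition that characterizes when a point of $\mathrm{M}$ corresponds to a primitive ideal.

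More concretely, the efficient route is: (i) recall that in a separable $C^*$-algebra a gauge-invariant ideal $I_\CH$ is primitive if and only if it is \emph{prime} (primitive $\Rightarrow$ prime always; prime $\Rightarrow$ primitive for separable $C^*$-algebras); (ii) observe that, because every ideal of $C^*(\CB,\CL,\theta)$ is gauge-invariant (Theorem~\ref{equivalent:(K)}, using Condition (K), local finiteness, and countability) and the assignment $\CH\mapsto I_\CH$ is an inclusion-preserving bijection onto all ideals that turns intersections of hereditary saturated ideals into intersections of ideals, $I_\CH$ is prime in $C^*(\CB,\CL,\theta)$ exactly when $\CH$ is a \emph{prime} element of the lattice of hereditary saturated ideals (if $\CH_1\cap\CH_2\subseteq\CH$ then $\CH_1\subseteq\CH$ or $\CH_2\subseteq\CH$); (iii) dualize to $\CT=\CB\setminus\CH$ and check directly from axioms (T0)--(T5) that $\CH$ is prime in this sense if and only if $\CT$ is a maximal tail. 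For the ``only if'' part of (iii) one essentially re-derives Lemma~\ref{prim gives max}'s argument combinatorially; for the ``if'' part, given hereditary saturated $\CH_1,\CH_2$ with $\CH_1\cap\CH_2\subseteq\CH$ and $\CH_1\not\subseteq\CH$, $\CH_2\not\subseteq\CH$, pick $A_1\in\CH_1\cap\CT$ and $A_2\in\CH_2\cap\CT$; by (T5) there is $C\in\CT$ with $A_1\ge C$ and $A_2\ge C$, so $C\subseteq\theta_{\af}(A_1)$ for some $\af$, hence $C\in\CH_1$ (heredity), and likewise $C\in\CH_2$, so $C\in\CH_1\cap\CH_2\subseteq\CH$, contradicting $C\in\CT$.

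I expect the main obstacle to be the careful verification that the lattice isomorphism $\CH\mapsto I_\CH$ genuinely lets one transfer primeness both ways — in particular that an arbitrary (not necessarily countable) intersection of the ideals $I_{\CH_\ld}$ equals $I_{\bigcap\CH_\ld}$, and the standard fact that in a separable $C^*$-algebra prime ideals are primitive. Given those, the combinatorial equivalence in step (iii) is the content-bearing part but is short, amounting essentially to the (T5)-argument above in one direction and a compression/irreducibility argument mirroring Lemma~\ref{prim gives max} in the other. An alternative, and perhaps cleaner, organization avoiding the separable-prime-equals-primitive black box is to argue directly with an irreducible representation as in Lemma~\ref{prim gives max} for ``primitive $\Rightarrow$ maximal tail'', and for the converse to exhibit $I_\CH$ as the kernel of an irreducible representation by realizing $C^*(\CB/\CH,\CL,\theta)$ concretely and using that maximality of $\CT$ forces the quotient to have a faithful irreducible representation; but this requires more structure theory of the quotient algebra, so I would default to the lattice-theoretic route.
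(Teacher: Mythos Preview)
Your ``efficient route'' (i)--(iii) is exactly the paper's proof: the forward direction invokes Lemma~\ref{prim gives max} (together with $\CH_{I_\CH}=\CH$), and the converse shows $I_\CH$ is prime --- hence primitive by separability --- via precisely the (T5)-argument you spell out. The preceding detour through arbitrary intersections and Lemma~\ref{closure} is unnecessary (primeness only involves \emph{binary} intersections, so your worry about $\bigcap_\ld I_{\CH_\ld}=I_{\bigcap_\ld\CH_\ld}$ never arises), and the closing alternative via an explicit irreducible representation is not pursued in the paper either.
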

\begin{proof} ($\Rightarrow$) Suppose that   $\CH$ is a hereditary saturated ideal of $\CB$ and $I_\CH$ is primitive. Then by Lemma \ref{prim gives max}, the set $\CT:=\CB \setminus \CH=\{A \in \CB : p_A \notin I_\CH\}$ is a maximal tail. 

$(\Leftarrow)$ Let $\CT$ be a maximal tail. Then $\CH:=\CB \setminus \CT$ is a proper hereditary and saturated ideal in $\CB$ by Remark \ref{remark:1}.  We show that $I_\CH$ is a prime ideal.  Suppose that $I_1, I_2$ are ideals in $C^*(\CB,\CL,\theta)$ such that $I_1 \cap I_2 \subseteq I_\CH$. Since every ideal of $C^*(\CB,\CL,\theta)$ is gauge-invariant by Theorem \ref{equivalent:(K)}, it follows from \cite[Proposition 10.11]{COP} that there are hereditary saturated subsets $\CH_i$ such that $I_i=I_{\CH_i}$ for $i=1,2$. Then $\CH_1 \cap \CH_2 \subseteq \CH$. If $\CH_1 \nsubseteq \CH$ and $\CH_2 \nsubseteq \CH$, then there are $A_i \in \CH_i \setminus \CH$ for $i=1,2$. By \ref{T5}, there exists $C \in \CT$ such that $A_1 \geq C$ and $A_2 \geq C$. So there are $\af_i \in \CL^*$ such that $C \subseteq \theta_{\af_i}(A_i)$ for $i=1,2$. Then $C \in \CH_1 \cap \CH_2 \subseteq \CH$ since $\CH_1$ and $\CH_2$ are hereditary. This contradicts  $C \notin \CH$. Hence, either $\CH_1 \subseteq \CH$ or $\CH_2 \subseteq \CH$, which means either $I_1=I_{\CH_1} \subseteq I_\CH$ or $I_2=I_{\CH_2} \subseteq I_\CH$. This shows that $I_\CH$ is prime. Thus, it is primitive since $C^*(\CB,\CL,\theta)$ is separable (\cite[Proposition A.17 and Remark A.18]{RW}).  
\end{proof}

 By $\prim(C^*(\CB, \CL, \theta))$ 
we mean the set of primitive ideals in $C^*(\CB, \CL, \theta)$. 
We now  obtain a complete list of primitive ideals  of $C^*(\CB,\CL,\theta)$ and a description of 
 the hull-kernel topology of $\prim(C^*(\CB,\CL,\theta))$.

\begin{thm}\label{max-homeo-prim}Let $(\CB,\CL,\theta)$ be a locally finite Boolean dynamical system such that $\CB$ and $\CL$ are countable. Suppose that $(\CB,\CL,\theta)$ satisfies Condition (K).   Then the map $$\phi: \mathrm{M} \rightarrow \prim(C^*(\CB,\CL,\theta))$$ defined by $\phi(\CT)=I_{H_{\CT}}$ is a homeomorphism, where $H_{\CT}=\CB \setminus \CT$.
\end{thm}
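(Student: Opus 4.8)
The plan is to establish that $\phi$ is a well-defined bijection and then that both $\phi$ and $\phi^{-1}$ are continuous, by matching basic open sets on the two sides.

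\textbf{Bijectivity.} First I would note that $\phi$ is well-defined: if $\CT\in\mathrm{M}$, then $H_\CT=\CB\setminus\CT$ is a proper hereditary saturated ideal by Lemma~\ref{remark:1}, so $I_{H_\CT}$ is an honest ideal, and it is primitive by the $(\Leftarrow)$ direction of Proposition~\ref{max-bij-prim}. For injectivity, suppose $\phi(\CT_1)=\phi(\CT_2)$, i.e.\ $I_{H_{\CT_1}}=I_{H_{\CT_2}}$. Since $(\CB,\CL,\theta)$ is locally finite, the correspondence $\CH\mapsto I_\CH$ between non-empty hereditary saturated ideals and non-zero gauge-invariant ideals is a bijection (\cite[Proposition 10.11]{COP}; the empty/zero case is trivial), so $H_{\CT_1}=H_{\CT_2}$ and hence $\CT_1=\CT_2$. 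For surjectivity, let $I\in\prim(C^*(\CB,\CL,\theta))$. By Condition~(K) and Theorem~\ref{equivalent:(K)}, $I$ is gauge-invariant, so $I=I_\CH$ for $\CH=\CH_I=\{A\in\CB:p_A\in I\}$. By the $(\Rightarrow)$ direction of Proposition~\ref{max-bij-prim} (equivalently Lemma~\ref{prim gives max}), $\CT:=\CB\setminus\CH$ is a maximal tail, and $\phi(\CT)=I_{H_\CT}=I_\CH=I$.

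\textbf{Homeomorphism.} The key computation is to identify $\phi(\mathrm{U}_A)$. For $A\in\CB$ and $\CT\in\mathrm{M}$ we have $A\in\CT \iff A\notin H_\CT \iff p_A\notin I_{H_\CT}$ (the last equivalence because $\CH_{I_\CH}=\CH$ for hereditary saturated $\CH$, again from \cite[Proposition 10.11]{COP} and locally finiteness). Hence $\phi(\mathrm{U}_A)=\{I\in\prim(C^*(\CB,\CL,\theta)):p_A\notin I\}$, which is precisely the complement of the hull-kernel-closed set $\{I:p_A\in I\}$, hence open in $\prim(C^*(\CB,\CL,\theta))$. Since $\{\mathrm{U}_A:A\in\CB\}$ is a basis for the topology on $\mathrm{M}$ (Proposition~\ref{max-top}), this shows $\phi$ is an open map. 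Conversely, a sub-basis for the hull-kernel topology on $\prim(C^*(\CB,\CL,\theta))$ is given by the complements of hulls of elements; since every ideal is generated by the gauge-invariant ideals it contains, and these by projections $p_A$, it suffices to show $\phi^{-1}$ of each set $\{I:p_A\notin I\}$ is open, and this preimage is exactly $\mathrm{U}_A$ by the displayed equivalence — so $\phi$ is continuous. Combined with bijectivity, $\phi$ is a homeomorphism.

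\textbf{Main obstacle.} The routine bijectivity and the open-map direction go through smoothly once Proposition~\ref{max-bij-prim} and Theorem~\ref{equivalent:(K)} are invoked. The point that requires a little care is the continuity of $\phi$: one must check that the closed sets $\{I:p_A\in I\}$, $A\in\CB$, together with the whole space actually generate the hull-kernel topology on $\prim(C^*(\CB,\CL,\theta))$ — i.e.\ that every closed set is an intersection of such sets. This follows because, under Condition~(K), every ideal $I$ equals $I_{\CH_I}$, so $I=\overline{\sum_{A\in\CH_I}I_{\CI_A}}$ is determined by which $p_A$ it contains; hence the hull of an arbitrary set $\mathrm{S}\subseteq\prim$ equals $\bigcap\{\,\{I:p_A\in I\}: A\in\CB,\ p_A\in\bigcap_{J\in\mathrm{S}}J\,\}$. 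Translating this through $\phi^{-1}$ and Lemma~\ref{closure} (which describes closures in $\mathrm{M}$ via $\overline{\mathrm{S}}=\{\CT:\CT\subseteq\bigcup_{\CS\in\mathrm{S}}\CS\}$) gives that $\phi$ carries closures to closures, which together with bijectivity yields that $\phi^{-1}$ is continuous as well, completing the proof.
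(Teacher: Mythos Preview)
Your proof is correct and follows essentially the same strategy as the paper. The bijectivity argument is identical (Proposition~\ref{max-bij-prim} plus the inverse correspondence from \cite[Proposition 10.11]{COP}). For the homeomorphism, the paper dispatches both directions at once by showing $\phi(\overline{\mathrm{S}})=\overline{\phi(\mathrm{S})}$ directly from Lemma~\ref{closure} and the fact that, under Condition~(K), $\bigcap_{\CS\in\mathrm{S}} I_{\CB\setminus\CS}\subseteq I$ is equivalent to $\CT\subseteq\bigcup_{\CS\in\mathrm{S}}\CS$; your argument separates the two directions, getting openness from the identification $\phi(\mathrm{U}_A)=\{I:p_A\notin I\}$ and then reconstructing the closure-preservation argument for continuity in your ``Main obstacle'' paragraph. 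The end result and the essential ingredients (Lemma~\ref{closure}, the $\CH\leftrightarrow I_\CH$ bijection, and gauge-invariance of all ideals) are the same; the paper's route is just a bit more economical.
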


\begin{proof}

By Proposition \ref{max-bij-prim}, we see that $\CT\mapsto I_{\CB\setminus\CT}$ is a surjective map from $\mathrm{M}$ to $\prim(C^*(\CB, \CL, \theta))$. It follows from Lemma \ref{prim gives max}  that $\CT=\{A\in\CB:p_A\notin I_{\CB\setminus\CT}\}$, so $\phi$ is injective. If $\mathrm{S}$ a subset of $\mathrm{M}$, then it follows from Lemma~\ref{closure} that 
$$\phi(\overline{\mathrm{S}})=\{I_{\CB\setminus\CT}:\CT\subseteq\cup_{\mathcal{S}\in\mathrm{S}}\mathrm{S}\}=\{I\in \prim(C^*(\CB, \CL, \theta)): \cap_{\mathcal{S}\in\mathrm{S}} I_{\CB\setminus\mathcal{S}} \subseteq I\} =\overline{\phi(\mathrm{S})}.$$ 
This shows that $\phi$ is a homeomorphism.
\end{proof}

\section{Topological dimension zero}\label{top dim zero}

A $C^*$-algebra $A$ is said to have {\em topological dimension zero} if the primitive ideal space of $A$ endowed with the hull-kernel topology has a basis of compact open sets (\cite{BP2}).
Proposition \ref{max-bij-prim} and Theorem \ref{max-homeo-prim} say that if  a locally finite Boolean dynamical system $(\CB,\CL,\theta)$ satisfies Condition (K) and has countable $\CB$ and $\CL$, then the topological dimension of $C^*(\CB, \CL, \theta)$ is 0. 
We  show in Theorem \ref{equivalent:(K):top dim zero}  that the converse is also true. 
We also show that this is equivalent to $C^*(\CB,\CL,\theta)$ having the (weak) ideal property.

To begin with, we recall that 
a $C^*$-algebra $A$ is said to  have {\em the ideal property}  (\cite[Remark 2.1]{CPR}) if whenever $I,J$ are ideals in $A$ such that $I$ is not contained in $J$, there is a projection in $I \setminus J$. A  $C^*$-algebra $A$ is said to have {\em the weak ideal property} (\cite[Definition 8.1]{PP}) if whenever $I \subsetneq J \subset \mathcal{K} \otimes A $ are ideals in $ \mathcal{K} \otimes A$,  where $\mathcal{K}$ denotes the $C^*$-algebra of compact operators on a separable infinite dimensional Hilbert space, then $J/I$ contains a nonzero projection.

\begin{thm}\label{equivalent:(K):top dim zero}
Let $(\CB,\CL,\theta)$ be a locally finite Boolean dynamical system such that $\CB$ and $\CL$ are countable. Then the following are equivalent.
\begin{enumerate}
  \item $(\CB, \CL, \theta)$ satisfies Condition $(K)$.
 \item $C^*(\CB, \CL, \theta)$ has the ideal property.
  \item $C^*(\CB, \CL, \theta)$ has the weak ideal property.
     \item The topological dimension of $C^*(\CB, \CL, \theta)$ is 0.
  \item $C^*(\CB, \CL, \theta)$ has no quotients containing a corner that is isomorphic to $M_n(C(\mathbb{T}))$.
 \end{enumerate}
\end{thm}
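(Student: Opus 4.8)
The plan is to prove the cycle of implications $(1)\Rightarrow(4)\Rightarrow(3)\Rightarrow(2)\Rightarrow(1)$ together with $(1)\Rightarrow(5)\Rightarrow(1)$, leaning on the earlier results. Almost all the hard work has already been done: Theorem~\ref{equivalent:(K)} identifies Condition (K) with ``all ideals are gauge-invariant,'' Proposition~\ref{max-bij-prim} and Theorem~\ref{max-homeo-prim} identify $\prim(C^*(\CB,\CL,\theta))$ with the space $\mathrm M$ of maximal tails, and Proposition~\ref{max-top} shows $\mathrm M$ has a basis of compact open sets; and Proposition~\ref{prop2:cyclic maximal tails} produces the offending corner $M_n(C(\T))$ from a cyclic maximal tail.

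First I would establish $(1)\Rightarrow(4)$: by Theorem~\ref{max-homeo-prim}, $\prim(C^*(\CB,\CL,\theta))$ is homeomorphic to $\mathrm M$, and by Proposition~\ref{max-top} the sets $\mathrm U_A$ ($A\in\CB$) form a basis of compact open sets for $\mathrm M$; transporting this basis through the homeomorphism gives a basis of compact open sets for $\prim(C^*(\CB,\CL,\theta))$, which is exactly topological dimension zero. For $(4)\Rightarrow(3)$ I would invoke \cite[Theorem 2.8]{PP2}, which says any $C^*$-algebra with the weak ideal property has topological dimension zero --- used contrapositively --- so I actually need $(3)\Rightarrow(4)$ wired correctly; more precisely the clean route is $(4)\Rightarrow(3)$ \emph{is} the cited implication read backwards, so I will instead run $(3)\Rightarrow(1)$ directly. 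Let me restate the intended order: prove $(1)\Rightarrow(2)$, $(2)\Rightarrow(3)$, $(3)\Rightarrow(4)$, $(4)\Rightarrow(1)$, and $(1)\Rightarrow(5)\Rightarrow(1)$.

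For $(1)\Rightarrow(2)$: assuming Condition (K), Theorem~\ref{equivalent:(K)} gives that every ideal of $C^*(\CB,\CL,\theta)$ is gauge-invariant, so by \cite[Proposition 10.11]{COP} ideals are in order-preserving bijection with hereditary saturated ideals of $\CB$ via $\CH\mapsto I_\CH$; if $I_{\CH_1}\not\subseteq I_{\CH_2}$ then $\CH_1\not\subseteq\CH_2$, pick $A\in\CH_1\setminus\CH_2$, and then $p_A$ is a projection in $I_{\CH_1}\setminus I_{\CH_2}$, giving the ideal property. The implication $(2)\Rightarrow(3)$ is \cite[Theorem 4.2]{PP} (the ideal property implies the weak ideal property), which is quoted in the introduction. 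For $(3)\Rightarrow(4)$ I would cite \cite[Theorem 2.8]{PP2} directly: weak ideal property implies topological dimension zero. The key remaining implication is $(4)\Rightarrow(1)$, equivalently (by Theorem~\ref{equivalent:(K)}) $\neg(1)\Rightarrow\neg(4)$: if $(\CB,\CL,\theta)$ fails Condition (K), then by Theorem~\ref{equivalent:(K)} it has a cyclic maximal tail $\CT$; Proposition~\ref{prop2:cyclic maximal tails} then yields $B\in\CT$ with $p_{[B]}C^*(\CB/(\CB\setminus\CT),\CL,\theta)p_{[B]}\cong M_n(C(\T))$. This is a corner in a quotient of $C^*(\CB,\CL,\theta)$, and since $\prim(M_n(C(\T)))\cong \T$ has no isolated points --- hence no nonempty compact open proper subsets of the right kind --- one checks that $M_n(C(\T))$, and therefore $C^*(\CB,\CL,\theta)$, does not have topological dimension zero; this is the same computation that simultaneously proves $\neg(1)\Rightarrow\neg(5)$ and $\neg(1)\Rightarrow\neg(2)$.

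Finally, $(1)\Rightarrow(5)$ and $(5)\Rightarrow(1)$: the forward direction follows from the observation in the proof of $(4)\Rightarrow(2)$ inside Theorem~\ref{equivalent:(K)} --- a cyclic maximal tail would force a corner $M_n(C(\T))$ in a quotient, contradicting (5), so (5) forces no cyclic maximal tails, i.e.\ (1); the reverse is immediate since by Theorem~\ref{equivalent:(K)} every ideal is gauge-invariant, every quotient is again a Cuntz--Krieger Boolean $C^*$-algebra of a quotient Boolean dynamical system \cite[Proposition 10.11]{COP}, and such a quotient again satisfies Condition (K) (its hereditary saturated ideals lift), so if it contained a corner $M_n(C(\T))$ we would get a cyclic maximal tail by Proposition~\ref{prop2:cyclic maximal tails} run in reverse --- contradiction. \textbf{The main obstacle} I anticipate is the bookkeeping around $(4)\Rightarrow(1)$: one must verify that ``a corner of a quotient isomorphic to $M_n(C(\T))$'' genuinely obstructs topological dimension zero, which requires knowing that topological dimension zero passes to quotients and to hereditary subalgebras (or corners) and that $\prim(M_n(C(\T)))=\T$ is not zero-dimensional in the relevant sense; all of this is standard (\cite{BP2}) but must be cited carefully. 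Everything else is assembly of results already proved or quoted.
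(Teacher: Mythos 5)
Your proposal is correct and follows essentially the same route as the paper: the cycle $(1)\Rightarrow(2)\Rightarrow(3)\Rightarrow(4)$ via gauge-invariance of ideals and the cited results of Pasnicu--Phillips, closed up by using Proposition~\ref{prop2:cyclic maximal tails} to produce a corner isomorphic to $M_n(C(\T))$ in a quotient when Condition (K) fails, together with the permanence of topological dimension zero under quotients and corners (the paper arranges this as $(4)\Rightarrow(5)\Rightarrow(1)$ rather than your $(4)\Rightarrow(1)$ plus a separate treatment of $(5)$, but the content is identical). The only blemish is your justification of $(1)\Rightarrow(5)$ by ``Proposition~\ref{prop2:cyclic maximal tails} run in reverse,'' which is not a valid inference since that proposition only goes from a cyclic maximal tail to a corner $M_n(C(\T))$; this is harmless, however, because $(1)\Rightarrow(4)\Rightarrow(5)$ already follows from the other steps you establish.
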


\begin{proof} 
(1)$\implies$(2):  Let $I$ and $J$ be ideals of $C^*(\CB, \CL,\theta)$ such that $I\not\subseteq J$. Then $I$ and $J$ are gauge-invariant by Theorem \ref{equivalent:(K)}. Therefore it follows from \cite[Theorem 10.12]{COP} that $\mathcal{H}_I:=\{A\in\CB:p_A\in I\}\not\subseteq \{A\in\CB:p_A\in J\}=:\mathcal{H}_J$. Thus, $J/I$ contains a projection.

(2)$\implies$(3): Follows from \cite[Proposition 8.2]{PP}.

(3)$\implies$(4): Follows from \cite[Theorem 1.8]{PP2}.

(4)$\implies$(5): It follows from \cite[Proposition 3.2.1]{Dixmier} and \cite[Proposition 2.6]{BP2} that the property of having topological dimension zero passes to ideals and  quotients. Since Morita equivalent $C^*$-algebras have homeomorphic primitive ideal spaces (see for instance \cite[p.156]{Connes}), topological dimension zero passes to full corners, and thus to corners. Since $M_n(C(\mathbb{T}))$ does not have topological dimension zero, it follows that (4)$\implies$(5).

(5)$\implies$(1):  Suppose that   $(\CB, \CL, \theta)$ does not satisfy Condition (K). Then once again  $(\CB, \CL, \theta)$  has a cyclic maximal tail $\CT$ and  there is a $B\in\CT$ such that $p_{[B]}C^*(\CB/(\CB\setminus\CT),\CL,\theta)p_{[B]}$ is isomorphic to $M_n(C(\mathbb{T}))$ for some $n\in\mathbb{N}$ by Proposition \ref{prop2:cyclic maximal tails}.  
  Thus $C^*(\CB,\CL,\theta)$  has a quotient containing a corner that is isomorphic to $M_n(C(\mathbb{T}))$ for some $n\in\mathbb{N}$, a contradiction. 

\end{proof}

\begin{cor} \label{cor} Let $(\CB,\CL,\theta)$ be a locally finite Boolean dynamical system such that $\CB$ and $\CL$ are countable. If $C^*(\CB,\CL,\theta)$ has real rank zero or is purely infinite, then $(\CB, \CL, \theta)$ satisfies Condition $(K)$.
\end{cor}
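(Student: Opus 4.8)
The plan is to deduce Corollary~\ref{cor} directly from Theorem~\ref{equivalent:(K):top dim zero}, using implication $(5)\implies(1)$. So it suffices to show that if $C^*(\CB,\CL,\theta)$ has real rank zero, or is purely infinite, then it has no quotient containing a corner isomorphic to $M_n(C(\mathbb{T}))$ for any $n\in\N$. This reduces the corollary to two standard permanence facts about these two classes of $C^*$-algebras, together with the observation that $M_n(C(\mathbb{T}))$ is itself neither of real rank zero nor purely infinite.

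First I would recall the relevant permanence properties. Real rank zero passes to quotients (Brown--Pedersen) and to hereditary subalgebras, in particular to corners $pAp$; so if $C^*(\CB,\CL,\theta)$ has real rank zero, then every corner of every quotient of it has real rank zero. On the other hand, $M_n(C(\mathbb{T}))$ does not have real rank zero, since $\mathrm{RR}(M_n(C(X)))=0$ forces $\dim X=0$ while $\mathbb{T}$ is one-dimensional; equivalently $C(\mathbb{T})$ has no nontrivial projections, so it is not of real rank zero. Hence in the real rank zero case $C^*(\CB,\CL,\theta)$ can have no such quotient corner, and Condition (K) follows.

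For the purely infinite case I would argue similarly. One should work with the appropriate notion of pure infiniteness (in the non-simple setting, pure infiniteness in the sense of Kirchberg--R{\o}rdam); this property passes to quotients and to (hereditary subalgebras, hence) corners. But $M_n(C(\mathbb{T}))$ is not purely infinite: it has nonzero abelian (indeed, one-dimensional) quotients/representations, and a purely infinite $C^*$-algebra has no nonzero abelian quotient. Alternatively, a purely infinite algebra contains no nonzero finite projections, whereas $M_n(C(\mathbb{T}))$ is unital with a finite unit. Either way, $C^*(\CB,\CL,\theta)$ purely infinite rules out a quotient corner isomorphic to $M_n(C(\mathbb{T}))$, and again $(5)\implies(1)$ of Theorem~\ref{equivalent:(K):top dim zero} gives Condition (K).

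The content here is almost entirely bookkeeping with citations; the only mild subtlety -- and the point I would be most careful about -- is ensuring the permanence statements are quoted in the form actually needed: namely that real rank zero (resp.\ pure infiniteness) is inherited by corners of quotients, not merely by quotients or by full corners alone. The cleanest route is the one already used in the proof of $(4)\implies(5)$ in Theorem~\ref{equivalent:(K):top dim zero}: pass to the quotient, pass to the corner (a hereditary subalgebra), and then invoke that $M_n(C(\mathbb{T}))$ fails the property in question. No genuine obstacle is expected.
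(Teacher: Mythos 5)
Your proposal is correct. The purely infinite half is exactly the paper's argument: pure infiniteness passes to quotients and to (hereditary subalgebras, hence) corners by Kirchberg--R{\o}rdam, $M_n(C(\mathbb{T}))$ is not purely infinite, so condition (5) of Theorem~\ref{equivalent:(K):top dim zero} holds and $(5)\implies(1)$ gives Condition (K). For the real rank zero half you take a slightly different route from the paper: you again aim at condition (5), invoking that real rank zero passes to quotients and hereditary subalgebras and that $M_n(C(\mathbb{T}))$ fails real rank zero (since $\dim\mathbb{T}=1$), whereas the paper instead uses the Brown--Pedersen result that real rank zero implies the ideal property and then applies the implication $(2)\implies(1)$ of the same theorem. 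Both are legitimate one-line reductions to standard permanence facts; the paper's version is marginally shorter because it needs only a single citation and no discussion of why $M_n(C(\mathbb{T}))$ fails the property, while yours has the mild advantage of treating the two hypotheses in a uniform way through condition (5). Your stated supporting facts are all correct as quoted, so there is no gap.
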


\begin{proof}
Suppose first that $C^*(\CB,\CL,\theta)$ has real rank zero. It then follows from \cite[Theorem 2.6]{BP} that $C^*(\CB, \CL, \theta)$ has the ideal property, and thus from Theorem~\ref{equivalent:(K):top dim zero} that $(\CB, \CL, \theta)$ satisfies Condition $(K)$.

Suppose then that $C^*(\CB,\CL,\theta)$ is purely infinite and that $A$ is a corner of a quotient of $C^*(\CB,\CL,\theta)$. Since the property of being purely infinite passes to quotients and corners \cite[Propositions 4.3 and 4.17]{KR}, $A$ is purely infinite and thus cannot be isomorphic to $M_n(C(\mathbb{T}))$. It therefore follows from Theorem~\ref{equivalent:(K):top dim zero} that $(\CB, \CL, \theta)$ satisfies Condition $(K)$.
\end{proof}

\vskip 3pc
\subsection*{Acknowledgements}
The discussion of this paper started when the second author visited the first author in Faroe Islands.
The second author is grateful for the hospitality of the first author and his family. 

The authors would like to thank the referee for careful reading and valuable comments.


\vskip 3pc

\end{document}